\theoremstyle{plain}
\newtheorem{lemma}{Lemma}[section]
\newtheorem{theorem}[lemma]{Theorem}
\newtheorem{definition}[lemma]{Definition}
\newtheorem{corollary}[lemma]{Corollary}
\newtheorem{remark}[lemma]{Remark}
\newtheorem{proposition}[lemma]{Proposition}
\newtheorem{assumption}{Assumption}
\newtheorem{example}{Example}
\begin{document}
\title[HJB equation for optimal control of SWHS on graph]{Hamilton--Jacobi--Bellman equation for optimal control of stochastic Wasserstein--Hamiltonian system on graphs}
\author{Jianbo Cui, Tonghe Dang}
\address{Department of Applied Mathematics, The Hong Kong Polytechnic
University, Hung Hom, Kowloon, Hong Kong, China.}
\email{jianbo.cui@polyu.edu.hk; tonghe.dang@polyu.edu.hk(Corresponding author)}
\thanks{This work is supported by MOST National Key R\&D Program No. 2024FA1015900, the Hong Kong Research Grant Council GRF grant 15302823, GRF grant 15301025, NSFC/RGC Joint Research Scheme N$\_$PolyU5141/24, NSFC grant 12522119, NSFC grant 12301526, internal funds (P0041274, P0045336)  from Hong Kong Polytechnic University,  and the
CAS AMSS-PolyU Joint Laboratory of Applied Mathematics.}
\begin{abstract}
Stochastic optimal control problems for Hamiltonian dynamics on graphs have wide-ranging applications in mechanics and quantum field theory, particularly in systems with graph-based structures. 
In this paper, we establish  the existence and uniqueness of viscosity solutions for a new class of Hamilton--Jacobi--Bellman (HJB) equations arising from the optimal control of stochastic Wasserstein--Hamiltonian systems (SWHSs) on graphs. One distinctive feature of these HJB equations is the simultaneous involvement of the Wasserstein geometry on the Wasserstein space over graphs and the Euclidean geometry in physical space. The nonlinear geometric structure, along with the logarithmic potential induced by the graph-based state equation, adds further complexity to the analysis.
To address these challenges, we introduce an energy-truncation technique within the doubling of variables framework, specifically designed to handle the interaction between the interiorly defined Wasserstein space on graphs and the unbounded Euclidean space. 
In particular, our findings demonstrate the well-posedness of HJB equations related to optimal control problems for both stochastic Schr\"odinger equation with polynomial nonlinearity and stochastic logarithmic Schr\"odinger equation on graphs. To the best of our knowledge, this work is the first to develop HJB equations for the optimal control of SWHSs on graphs.

\end{abstract}
\keywords{Hamilton--Jacobi--Bellman equation $\cdot$ Optimal control $\cdot$ Stochastic Wasserstein--Hamiltonian system  $\cdot$ Wasserstein space on graph}
\subjclass[2020]{49L25, 35R02, 93E20, 35Q55, 60H30}
\maketitle
\section{Introduction}
The stochastic control theory for Hamiltonian dynamics lies at the intersection of applied mathematics, physics, and engineering, with important applications in mechanics and quantum field theory 
\cite{AAP1,Sergio}. The introduction of stochasticity further captures essential features such as environmental decoherence and control uncertainties that are intrinsic to real-world technologies. 
The stochastic control problems of Hamiltonian systems in Euclidean spaces have been deeply explored in the monograph \cite{YongJM2}, and related studies for Schr\"odinger equations on continuous Euclidean domains have also been extensively developed in both deterministic case (see e.g. \cite{Ito19,Aronna19,FZ19,Sergio}) and stochastic case (see e.g. \cite{Lvqi,ZhangD18,LQ2013,LQbook}). In parallel, the growing role of graph-structured models in networks, optical waveguide arrays, and engineered quantum devices has sparked increasing interest in the study of optimal transport and quantum phenomena on discrete graphs \cite{model-book}. Over the past decade, research on partial differential equations on graphs, including discrete optimal transport  \cite{ARMA2012,Mielke}, gradient flows \cite{Maas}, Hamiltonian systems \cite{Cui_MC22,cuiSIAM}, and Hamilton--Jacobi equations  \cite{MCC}, has been actively investigated. Some works have also addressed controllability problems on quantum graphs for Schr\"odinger-type equations \cite{star-graph1,star3,star4}, heat equation \cite{heat1}, {K}orteweg--de {V}ries
              equation \cite{star2}, etc. Comparatively, 
the stochastic control theory for stochastic differential equations with graph structures 
remains far less developed. 
A recent advance in this direction was made in \cite{Cui23}, where the authors establish the existence of optimal controls for the stochastic nonlinear Schr\"odinger equation on graphs and provide a description of the optimal condition via the forward and
backward stochastic differential equations. 
Nevertheless, to the best of our knowledge, no results are currently available concerning the Hamilton--Jacobi--Bellman (HJB) equations associated with stochastic control problems of stochastic Hamiltonian systems on graphs.

In this paper, we introduce a finite graph $G=(V,E,\omega)$, which is a undirected,  connected, and  weighted graph with vertex set $V=\{1,\ldots,n\}$, edge set $E \subset V \times V$, and weights $\omega_{jl} = \omega_{lj} > 0$ if $(j,l) \in E$, and $\omega_{jl} = 0$ otherwise. 
We consider the following stochastic Wasserstein--Hamiltonian system (SWHS) on the cotangent bundle of
the density manifold $\mathcal P(G)$ 
 \begin{align}\label{SWHS-1}
 \begin{cases}
\mathrm d\rho(t) = \frac{\partial }{\partial S}\mathcal H^{\mathbb V}_0(\rho(t) ,S(t))\,\mathrm dt+\frac{\partial }{\partial S} \mathcal H_1(\rho(t) ,S(t))\circ\mathrm dW(t),\\
\mathrm dS(t)=-\frac{\partial }{\partial \rho} \mathcal H^{\mathbb V}_0(\rho(t) ,S(t))\,\mathrm dt-\frac{\partial }{\partial \rho} \mathcal H_1(\rho(t) ,S(t))\circ \mathrm dW(t),
\end{cases}
\end{align} 
with $t\in(t_0,T]$  and initial value $\rho(t_0)\in\mathcal P(G),S(t_0)\in\mathbb R^n$, where $0\leq t_0<T,$ $\mathcal P(G)$ is the Wasserstein density space on graph, $\{ W(t)=(W_1(t),\ldots,W_n(t))\}_{t\ge t_0}$ is a standard $n$-dimensional Brownian motion on a complete filtered probability space $(\Omega,\mathcal F ,\{ \mathcal F_t\}_{t\ge t_0},\mathbb P )$.  Here $``\circ"$ stands for the Stratonovich type stochastic integration, and $\frac{\partial}{\partial S},\frac{\partial}{\partial\rho}$  denote the Euclidean partial derivatives with variables $S,\rho$, respectively.  The solution $(\rho(t),S(t))\in \mathcal P(G)\times\mathbb R^n,\,t\ge t_0$ is referred to as the state process, 
$\mathcal H^{\mathbb V}_0$ is the dominant energy depending on the control process $\mathbb V,$ and $\mathcal H_1$  is a $\mathbb R^n$-valued function; see Section \ref{sec_2} for details. 

The SWHS \eqref{SWHS-1} extends the stochastic Hamiltonian systems on Euclidean spaces \cite{Milstein,Hong} to spaces endowed with graph structures,
which may stand for some quantum models on graphs \cite{cuiSIAM}. One typical example is the stochastic Schr\"odinger equation (SSE) on graph \cite{ZhouJFA,Cui23}: 
\begin{align}\label{schro-intro}
\mathbf{i}\mathrm du_j=-\frac12(\Delta_Gu)_j+(u_j\mathbb V_j+u_jf_j(|u|^2))\mathrm dt+\sigma_ju_j\circ \mathrm dW_j(t),\quad j\in V,
\end{align}
where $\Delta_G$ is a nonlinear discretization of the Laplacian operator on $G$ (see \eqref{nonlinear_lap}),   
 $f_j:\mathbb R_+\to\mathbb R$ is a continuous function, $\mathbb V_j:\mathbb R_+\to\mathbb R$ denotes the control function,  $\sigma_j \in \mathbb R$ represents the diffusion coefficient.  We refer to Section \ref{sec_6} for more details.  
It is worth noting that the SSE on graphs can be viewed as a spatial discretization of the classical SSE in Euclidean space,  when $G$ is chosen as a lattice obtained from discretizing a continuous domain \cite{ZhouJFA}. Moreover, the nonlinear discretization of the Laplacian in \eqref{schro-intro} can preserve several physical 
properties, particularly the full continuum nonlinear dispersion relation, which is unattainable through
linear discretizations \cite{Cui23}. 

The stochastic optimal control problem of SWHS \eqref{SWHS-1} consists of minimizing a cost of the form 
\begin{align*}
\mathcal J(t, \rho, x; \mathbb V) := \mathbb E_t \Big[ \int_t^T F(s, \rho(s), S(s), \mathbb V(s)) \, \mathrm ds + h(\rho(T), S(T)) \Big],
\end{align*} 
over all admissible controls $\mathbb V\in\mathscr V_{\ell}[t,T]$ with the control set $\mathscr V_{\ell}[t,T]:=\{\mathbb V:[t,T]\to B_{\ell}:\mathbb V(s)\text{ is }\mathcal F_s\text{-adapted},s\in[t,T] \}$.
Here,   
$\rho(t)=\rho,S(t)=x,$ $B_{\ell}$ denotes the ball with radius $\ell>0,$ referred to as 
the control region, and functionals $F,h$ are respectively, the running cost and terminal cost.  
We aim to characterize  the value function $U(t,\rho,x)=\inf\limits_{\mathbb V\in\mathscr V_{\ell}[t,T]}\mathcal J(t,\rho,x;\mathbb V)$ by studying the following HJB equation  
\begin{align}&\frac{\partial}{\partial t}U(t,\rho,x)
+\inf_{\mathbb V\in B_{\ell}} \Big\{\Big\langle \partial_{\rho}U(t,\rho,x), D_x\mathcal H^{\mathbb V}_0(\rho,x) \Big\rangle - \Big\langle D_xU(t,\rho,x), D_{\rho}\mathcal H^{\mathbb V}_0(\rho,x)\Big\rangle\notag\\& + \frac{1}{2} \mathrm{tr}(\sigma \sigma^\top D^2_xU(t,\rho,x)) + F(t, \rho, x, \mathbb V)\Big\}=0, \;\;t \in (0,T), \rho \in \mathcal P^{\circ}(G), x \in \mathbb R^n,\label{HJB-intro}
\end{align}
with the terminal condition $U(T,\rho,x)=h(\rho,x)$, where $\partial_{\rho}U$ is the Fr\'echet derivative on the Wasserstein space on graphs, and $D_xU,D^2_xU$ are respectively,  the usual Euclidean gradient and Hessian matrix; see Section \ref{sec_4}  for more details.

The concept of viscosity solutions \cite{CL83}  for HJB equations  has become a fundamental tool in optimal control theory \cite{YongJM2,LionsI,LionsII,AAP3,YongJM,Atar} and many other related fields \cite{ZJJ1,Hata12,AAP1}. For comprehensive accounts of the theory, we refer the reader to  e.g. \cite{guide,book-controlled}; 
For developments in infinite-dimensional Hilbert space and Wasserstein space, see e.g.  \cite{Fabbri,Gozzi,Lions-infi1,unbounded} and \cite{Erhan,approximation_HJB,approximation_HJB2}, respectively. Nevertheless, the study of HJB equations with graph structures is still at an early stage. When graph structures are involved, the HJB equation \eqref{HJB-intro} exhibits several distinctive features.  First, it exhibits a mixed geometric structure,  involving both 
the graph-structured Wasserstein geometry on the density space and 
the usual Euclidean geometry on the momentum space. In particular, the interplay between the underlying graph geometry and the probability simplex constraint also introduces intrinsic structural complexities. Second, the dominant energy $\mathcal H^{\mathbb V}_0$ of the SWHS \eqref{SWHS-1} may contain penalization terms resembling the discrete Fisher information 
\cite{ZhouJFA}, which is formulated in terms of the graph structure and logarithmic geometry.  As a result, the HJB equation \eqref{HJB-intro} is defined only on the interior of the probability simplex, which is different from the classical HJB equation on bounded Euclidean domains with certain boundary conditions. 
Furthermore, the intricate structure of the state equation \eqref{SWHS-1} causes
 the absence of the moment estimates for solutions $\rho(\cdot)$ and $S(\cdot)$ themselves, which brings additional difficulties for establishing the uniqueness of the viscosity solution on the unbounded domain $\mathcal P^{\circ} (G)\times\mathbb R^n$.  
In this setting, it becomes necessary to introduce a mass conservation assumption on the SWHS to ensure the uniqueness of the solution.

The main contribution of this paper is to establish the existence and uniqueness of the viscosity solution for the HJB equation \eqref{HJB-intro}. This is achieved within a framework that integrates dynamic programming and the doubling of variables methods, while carefully reconciling the discrete graph geometry with probability space constraints. To this end, we first employ the dynamic programming principle method to investigate the connection between the value function of the stochastic optimal control problem and the associated HJB equation \eqref{HJB-intro} (see Section \ref{sec_33}). A key prerequisite is to establish the Bellman principle of optimality (see Proposition \ref{dynamic-pro}),  which asserts that the optimal cost at a given state and time can be decomposed into the cost over an initial interval and the optimal cost from the resulting state at a later time. 
By introducing the stopping time technique to truncate the energy functional,  and utilizing moment estimates for the energy of the SWHS  (see Proposition \ref{exact}), we prove the continuity and growth properties of the value function for the optimal control problem (see Proposition \ref{conti}). Combining the Bellman principle with the It\^o formula, 
we then demonstrate that the value function satisfies the viscosity sub- and super-solution inequalities,  thereby confirming that the value function of the optimal control  problem is indeed a viscosity solution of the HJB equation (see Theorem \ref{thm_existence}).

To further establish the uniqueness of the viscosity solution, we adopt the doubling of variables method adapted to the graph-structural setting (see Section \ref{sec_5}). The proof relies on two crucial technical ingredients. First, to address the difficulty caused by the unbounded domain for the state equation, we introduce an energy-truncation procedure that localizes the problem to compact subsets via smooth cut-off functions. 
 This leads to the formulation of a truncated HJB equation \eqref{HJB_R}, which preserves the structure of the original HJB equation within the level set of the energy. 
The second technical ingredient concerns the regularity properties at the maximum point of the auxiliary bivariate functional arising in the doubling of variables method.  
To this end, we construct modified auxiliary functionals (see \eqref{auxi-1} and \eqref{auxi-2}), equipped with truncation functions and barrier functions that could simultaneously control the growth in the Euclidean directions and respect the Wasserstein structure imposed by the graph geometry. 
 These constructions enable us to simultaneously control the growth in the Euclidean directions while accommodating the Wasserstein structure dictated by the graph geometry and the probability simplex constraint. By establishing the necessary regularity properties for the auxiliary function, we derive the viscosity inequalities along with suitable estimates, which ultimately ensure the uniqueness of the viscosity solution 
 (Steps 3--4 in the proof of Theorem \ref{uniqueness} in Section \ref{sec_6-2}).

Furthermore, by considering various forms of dominant energies, we apply our results to optimal control problems for the SSE \eqref{schro-intro} on graphs, including both the stochastic nonlinear Schr\"odinger
equation with polynomial nonlinearity and stochastic logarithmic Schr\"odinger equation on
graphs. In both cases, we establish the existence and uniqueness of viscosity solutions to the associated HJB equations arising from the corresponding stochastic optimal control problems. These findings may offer valuable insights into the study of optimal control problems in areas such as quantum information processing and coherent light transport on graph structures.

This paper is organized as follows.  Section \ref{sec_2} introduces
preliminaries, including notations and the well-posedness result for SWHSs on graphs. 
In Section \ref{sec_4},  we present our main results concerning the stochastic optimal control problem and the associated HJB equation on the Wasserstein space over graphs. Section \ref{sec_6} discusses applications of our main results to the optimal control problems involving stochastic Schr\"odinger equations on graphs.  Section \ref{sec_33} is devoted to proving that the value  function of the optimal control is a viscosity solution of the associated HJB equation.  In Section \ref{sec_5}, we provide the proof of the uniqueness for the viscosity
solution of the HJB equation. Appendix \ref{app1} contains proofs of several propositions and lemmas, as well as some useful results from convex analysis.

\section{Preliminaries}\label{sec_2}

In this section, we first introduce some frequently used notations and provide definitions of the Wasserstein space and Wasserstein derivatives on graphs. We then present the SWHS on a finite graph and establish its well-posedness.

\subsection{Notations}  For a positive integer $n\in\mathbb N_+,$
we denote by $\mathcal P(G)$ the probability simplex 
\begin{align*}
\mathcal P(G)=\Big\{\rho=(\rho_1,\ldots,\rho_n)\in[0,1]^n:\sum_{i=1}^n\rho_i=1\Big\}.
\end{align*} 
For fixed $\epsilon\in(0,\frac1n),$
let $\mathcal P_{\epsilon}(G):=\mathcal P(G)\cap [\epsilon,1)^n$. By $\partial A$ and $A^{\circ}$ we denote the boundary and interior of a Borel  set $A$. 
 Accordingly, $\mathcal P^{\circ}(G)$ is the interior of $\mathcal P(G),$ $\mathcal P^{\circ}_{\epsilon}(G)$ is the interior of $\mathcal P_{\epsilon}(G)$ for $\epsilon\in(0,\frac1n),$ and  $\partial \mathcal P(G)=\mathcal P(G)\backslash \mathcal P^{\circ}(G)$ and $\partial\mathcal P_{\epsilon}(G)=\mathcal P_{\epsilon}(G) \backslash\mathcal P^{\circ}_{\epsilon}(G).$  
We use $\mathbb S^{n\times n}$ and $\mathcal S^{n\times n}$ to denote the sets of $n\times n$ skew-symmetric and  symmetric matrices, respectively.  

Throughout this paper, we impose the following assumption on the symmetric function $g$. \begin{assumption}\label{assumption_g}
\begin{itemize}
\item[(g-\romannumeral1)] $g$ is continuous on $[0,\infty)^2$ and is smooth on $(0,\infty)^2$;
\item[(g-\romannumeral2)] $t\wedge r\leq g(t,r)\leq t\vee r$ for any $t,r\in[0,\infty);$
\item[(g-\romannumeral3)] $g(\lambda t,\lambda r)=\lambda g(t,r)$ for any $\lambda,t,r\in[0,\infty);$
\item[(g-\romannumeral4)] $g$ is concave;   
\item[(g-\romannumeral5)] $\int_0^1\frac{1}{\sqrt{g(r,1-r)}}\mathrm dr<\infty.$
\end{itemize}
\end{assumption}  
  Examples of the admissible functions $g$ include the averaged probability weight \cite{ZhouJFA}: $g(t,r)=\frac{t+r}{2};$  the logarithmic probability weight \cite{ARMA2012}:  $g(t,r)=\frac{t-r}{\log t-\log r};$ and the harmonic probability weight \cite{Maas}: $g(t,r)=\frac{2}{\frac1t+\frac1r}.$ 
 
For $\rho\in\mathcal P(G),$ we say that $\upsilon,\tilde\upsilon\in\mathbb S^{n\times n}$ are $\rho$-equivalent if $$(\upsilon_{ij}-\tilde\upsilon_{ij})g_{ij}(\rho)=0,\quad \forall \,(i,j)\in E,$$ where $g_{ij}(\rho):=g(\rho_i,\rho_j).$ This equivalence relation  induces a quotient space on $\mathbb S^{n\times n}$ denoted by $\mathbb H_{\rho}.$ Under Assumption \ref{assumption_g},  one can use the function $g$ to define a metric tensor on $\mathcal P(G)$ and endow $\mathbb H_{\rho}$ with 
the inner product and discrete norm 
\begin{align}\label{g1}
\langle \upsilon,\tilde\upsilon\rangle_{\rho}:=\frac12\sum_{(i,j)\in E}\upsilon_{ij}\tilde\upsilon_{ij}g_{ij}(\rho),\quad \|\upsilon\|_{\rho}:=\sqrt{\langle\upsilon,\upsilon\rangle_{\rho}},\;\upsilon,\tilde\upsilon\in\mathbb S^{n\times n}.
\end{align}
Here the coefficient $\frac12$ accounts for the fact that whenever $(i,j)\in E$ then $(j,i)\in E.$  
For a $\mathbb R^n$-valued mapping $\phi:V\to\mathbb R^n,$ define its graph gradient as $\nabla_G\phi:=\sqrt{\omega_{ij}}(\phi_i-\phi_j)_{(i,j)\in E}.$ The adjoint of $\nabla_G$ with respect to the  inner product $\langle\cdot,\cdot\rangle_{\rho}$ is the divergence operator $-\mathrm{div}_{\rho}$ given by
\begin{align*}
\mathrm{div}_{\rho}(\upsilon):=\Big(\sum_{j\in N(i)}\sqrt{\omega_{ij}}\upsilon_{ji}g_{ij}(\rho)\Big)_{i=1}^n,\;\upsilon\in\mathbb S^{n\times n}.
\end{align*} 
Then we have the following  integration by parts formula \begin{align}\label{IBP}\langle\nabla_G\phi,\upsilon\rangle_{\rho}=-\langle\phi,\mathrm{div}_{\rho}(\upsilon)\rangle, 
\end{align}
where $\langle\upsilon,\tilde\upsilon\rangle:=\frac12 \sum_{(i,j)\in E}\upsilon_{ij}\tilde\upsilon_{ij}$ for $\upsilon,\tilde\upsilon\in\mathbb S^{n\times n}$. 
We use $\|\cdot\|$ to denote the standard Euclidean norm of a vector. For a function $\psi:\mathbb R^n\to\mathbb R,$ its gradient and Hessian are denoted by $D_x\psi,D^2_x\psi$.  
For multi-variables real-valued function $\psi(x,y),x,y\in\mathbb R^n,$ we also use notations $\frac{\partial\psi(x,y)}{\partial x},\frac{\partial\psi(x,y)}{\partial y}$ to denote the usual Euclidean partial derivatives.

For $\rho^0,\rho^1\in\mathcal P(G),$ define the $L^2$-Monge--Kantorovich metric 
\begin{align}\label{MK}
\mathcal W(\rho^0,\rho^1):=\Big(\inf_{(\sigma,\upsilon)}\Big\{\int_0^1\langle\upsilon,\upsilon\rangle_{\sigma(t)}\mathrm dt:\dot{\sigma}+\mathrm {div}_{\sigma}(\upsilon)=0,\;\sigma(0)=\rho^0,\,\sigma(1)=\rho^1\Big\}\Big)^{\frac12},
\end{align}
where the infimum is taken over the set of pairs $(\sigma,\upsilon)$ such that $\sigma\in H^1(0,1;\mathcal P(G))$ and $\upsilon :[0,1]\to\mathbb S^{n\times n}$ is measurable. Condition (g-\romannumeral5) in Assumption \ref{assumption_g} guarantees that $\mathcal W(\rho^0,\rho^1)<\infty$ for any $\rho^0,\rho^1\in\mathcal P(G);$ see \cite[Proposition 3.7]{Liwuchen}. The space $(\mathcal P(G),\mathcal W)$ is called the \textit{Wasserstein space on graphs}; see e.g. \cite{Liwuchen,MCC,Maas,ARMA2012} for further details.  
 One can define the  \textit{$\mathcal W$-differentiability}, the \textit{Wasserstein gradient} $\nabla_{\mathcal W}$, as well as the \textit{Fr\'echet derivative} $\partial_{\rho}$ on the Wasserstein space $(\mathcal P(G),\mathcal W)$; see \cite[Definitions 3.9 and 3.11]{MCC}. Below we denote by  $\mathcal T_{\rho}\mathcal P(G)$ the closure of the range of $\nabla_G$ in $\mathbb H_{\rho}$, and refer to $T _{\rho}\mathcal P(G)$ as the tangent space of $\mathcal P(G)$ in Euclidean space $\mathbb R^n$. 
 We also use the notation $\|p\|_{l^1}:=\sum_{i=1}^n|p_i|$ for $p\in\mathbb R^n.$ 
 \begin{definition}\label{def-derivatives}
(\romannumeral1) A function $f:\mathcal P(G)\to\mathbb R$ is  is said to be $\mathcal W$-differentiable at $\rho$ if there exist $v\in \mathcal T_{\rho}\mathcal P(G)$ and $C>0$ such that 
for every $\epsilon>0$ there exists $\delta>0$ with the following property: for any $\tilde v\in \mathcal T_{\rho}\mathcal P(G)$ and $\|\mu-\rho\|_{l^1}\leq \delta$ for $\mu\in\mathcal P(G)$, we have 
 \begin{align}\label{Wass-gradient}
  |f(\mu)-f(\rho)-\langle v,\tilde v\rangle_{\rho}|\leq \epsilon\mathcal W(\mu,\rho)+C\|\mu-\rho+\mathrm{div}_{\rho}(\tilde v)\|_{l^1}.
 \end{align}
 We refer to the vector 
$v$ as the Wasserstein gradient of $f$ at $\rho,$ and denote it by 
 $\nabla_{\mathcal W}f(\rho)$.

(\romannumeral2) We say that the function $f:\mathcal P(G)\to\mathbb R$ has a \textit{F\'echet derivative} at $\rho\in\mathcal P(G)$ if there exists 
$p\in T_{\rho}\mathcal P(G)$ such that
\begin{align}\label{Frechet}
\sum_{i=1}^np_i=0, \text{ and }\lim_{s\to0}\frac{f((1-s)\rho+s\mu)-f(\rho)}{s}=\langle p,\mu-\rho\rangle,\quad \forall \mu\in\mathcal P(G).
\end{align} 
We denote the vector $p$ by ${\partial_{\rho} f}(\rho)$.
\end{definition} 

\begin{remark}
(\romannumeral1) From \cite[Lemma 3.15]{MCC}, we know that  if the Fr\'echet derivative of $f$ exists and it is continuous at $\rho\in\mathcal P^{\circ}(G)$, then we have \begin{align}\label{relation-derivative}
\nabla_{\mathcal W}f(\rho)=\nabla_{G}{\partial_{\rho} f}(\rho).\end{align}
According to Definition \ref{def-derivatives} (i), the Wasserstein gradient takes values in the space $\mathcal T_{\rho}\mathcal P(G),$ and thus is a skew-symmetric matrix. The upper triangular entries of $\nabla_{\mathcal W}f(\rho)$ are: $\sqrt{\omega_{ij}}\nabla^{e_{ij}}f(\rho),1\leq i<j\leq n$. 
Here, for any $(i,j)\in E$ such that $1\leq i<j\leq n,$ we define $e_{ij}\in\mathbb R^n$ whose $i$th entry is $1$, $j$th entry is $-1$ and other entries are zero, and  
\begin{align*}
\nabla^{e_{ij}}f(\rho):=\lim_{t\to0}\frac{f(\rho+te_{ij})-f(\rho)}{t},\quad \rho\in\mathcal P^{\circ}(G).
\end{align*}

(\romannumeral2) For a function $f:\mathcal P(G)\to\mathbb R$, one can calculate that 
\begin{align}\label{Fre2}\partial_{\rho}f(\rho)=Df(\rho)-\frac1nDf(\rho)^{\top}\mathbf 1=\sum_{j=1}^n\hat e_j(Df(\rho))_j,
\end{align}
 where $\mathbf 1=(1,\ldots,1)\in\mathbb R^n,$ $Df$ is the Euclidean gradient, and  $\hat e_j$ is a vector in $\mathbb R^n$ whose $j$th entry is $1-\frac1n$ and other entries are $-\frac1n.$ 
\end{remark}
Throughout this paper, 
we use
$C$ to denote a generic positive constant which may take different values at different appearances. 
Sometimes we write $C(a,b)$ or $C_{a,b}$ to emphasize the dependence on
the parameters $a,b.$  
We use $\Re$ and $\Im $ to denote real and imaginary parts of a complex number. Denote $a\vee b:=\max\{a,b\},a\wedge b:=\min\{a,b\}$ for real-valued numbers $a,b.$ We use $\mathbf 1_{A}$ to denote the indicator function of a measuable set $A$.

 \subsection{Stochastic Wasserstein--Hamiltonian system on a graph} 
In this subsection, we introduce the SWHS,  
and present the well-posedness result of the solution and the moment estimate of the energy functional.  
Consider the SWHS on the cotangent bundle of the density manifold $\mathcal P(G)$:
\begin{align}
\mathrm d\rho(t) &= \frac{\partial }{\partial S}\mathcal H^{\mathbb V}_0(\rho(t) ,S(t))\,\mathrm dt+\frac{\partial }{\partial S} \mathcal H_1(\rho(t) ,S(t))\circ\mathrm dW(t),\label{WHflow1}\\
\mathrm dS(t)&=-\frac{\partial }{\partial \rho}\mathcal H^{\mathbb V}_0(\rho(t) ,S(t))\,\mathrm dt-\frac{\partial }{\partial \rho} \mathcal H_1(\rho(t) ,S(t))\circ \mathrm dW(t),\label{WHflow2}
\end{align} 
where $t\in(t_0,T]$ for some constant $t_0\in(0,T]$ with $T>0$ being the final time, $\{ W(t)=(W_1(t),\ldots,W_n(t))\}_{t\ge t_0}$ is a standard $n$-dimensional Brownian motion on a complete filtered probability space $(\Omega,\mathcal F ,\{ \mathcal F_t\}_{t\ge t_0},\mathbb P )$, and $``\circ"$ stands for the Stratonovich type stochastic differential. 
Here, 
 $\mathcal H^{\mathbb V}_0$ is the dominant energy \cite{Cui23}, and  we use the subscript `$\mathbb V$' in $\mathcal H^{\mathbb V}_0$ to indicate its dependence on the control variable $\mathbb V=(\mathbb V_1,\ldots,\mathbb V_n)$; $\mathcal H_1$ is some $\mathbb R^n$-valued function. For simplicity, the initial value is assumed to be deterministic: $\rho(t_0)=\rho\in\mathcal P^{\circ}(G),$ $S(t_0)=x\in\mathbb R^n.$  We remark that one may also study the system with random initial values satisfying the $p$th moment boundedness; see e.g., \cite{cuiSIAM}.

To obtain the well-posedness of the system, we need to give a precise characterization of the energy functions. To this end, we introduce two classes of functions as follows.
 \begin{definition}\label{def-functions}
 (\romannumeral1) We say a function $\eta\in (\mathrm{Pr})$ if it  satisfies: 
\begin{itemize}
\item[($\mathrm{Pr}$-\romannumeral1)] The function $\eta\in\mathcal C^{1}(\mathcal P^{\circ}(G)\times\mathbb R^n;\mathbb R)$  has a lower bound, i.e., there exists $C_0\in\mathbb R$ such that $\eta-C_0\ge 0.$ 
\item[($\mathrm{Pr}$-\romannumeral2)]  For each $R>0,$ $\eta(\rho,x)\leq R$  implies 
\begin{align}\label{Pr-set}
\|D_{\rho}\eta\|\vee \|D_x\eta\|\leq C_1,\;
\min_{i\in V}\rho_i\ge c, \;\sup_{(i,j)\in E}|x_i-x_j|\leq C_2,
\end{align} 
 for some constants $c\in(0,1),C_1,C_2>0$ depending on $R.$ 
 
\item[($\mathrm{Pr}$-\romannumeral3)] $\eta(\rho,x)\to+\infty$ as $\rho\to\partial\mathcal P(G)$ or $|x_i-x_j|\to+\infty$ where $(i,j)\in E.$ 
 \end{itemize} 
 
(\romannumeral2) We say a function $f:\mathcal P^{\circ}(G)\times\mathbb R^n\to\mathbb R$ is \textit{locally Lipschitz continuous} if 
for each fixed $R_1\in(0,\frac1n)$ and $R_2>0,$ when $ \min\limits_{i\in V}\rho_i\wedge \min\limits_{i\in V}\mu_i\ge R_1$ and $ \max\limits_{(i,j)\in E}| x_i- x_j|\vee \max\limits_{(i,j)\in E}|y_i-y_j|\leq R_2,$ it holds 
\begin{align}\label{locally-Lip}
|f(\rho,x)-f(\mu,y)|\leq C(\|\rho-\mu\|+\|x-y\|),
\end{align}
where $C:=C(R_1,R_2)>0$ and $\|\cdot\|$ is the standard Euclidean norm of a vector. 
\end{definition}  In Section \ref{sec_6}, 
we will present  concrete  examples that satisfy the properties given in Definition \ref{def-functions}. To proceed, we propose the assumptions on $\mathcal H^{\mathbb V}_0$ and $\mathcal H_1$.   
\begin{assumption}  \label{energy} 
Let the dominant energy be given by
\begin{align}\label{def-H0}
\mathcal H^{\mathbb V}_0(\rho,x)=\mathcal H_0(\rho,x)+\mathcal V(\rho),\quad \rho\in\mathcal P^{\circ}(G),x\in\mathbb R^n,
\end{align} where $\mathcal V(\rho ) := \sum_{i=1}^n\mathbb V_i\rho_i$ is  the linear control potential, and $
\mathcal H_0\in(\mathrm{Pr})$    (see Definition \ref{def-functions}). Assume that $\mathcal H_0$ and its Euclidean partial derivatives are locally Lipschitz continuous in the sense of Definition \ref{def-functions} (\romannumeral2).  
Define the $\mathbb R^n$-valued function $\mathcal H_1$ by  
\begin{align}\mathcal H_1(\rho ,x) =
(\sigma_1\rho_1,\ldots,\sigma_n\rho_n)^{\top}
=:\sigma\rho, \label{def-H1}
\end{align}
where  
$\sigma := \mathrm{diag}(\sigma_1, \ldots, \sigma_n)$. 
\end{assumption}

The well-posedness of \eqref{WHflow1}--\eqref{WHflow2} is stated in the following proposition, whose proof is postponed to Appendix \ref{app1}. We refer to \cite[Theorem 4.1]{cuiSIAM} for more details and also refer to \cite[Theorem 3]{ZhouJFA} for the study of deterministic WHS.

\begin{proposition}\label{exact}
Let Assumptions \ref{assumption_g} and \ref{energy} hold. In addition assume that  
\begin{align}\label{bound_prop}\big\|D_x \mathcal H^{\mathbb V}_0(\rho,x)\big\|+\sum_{i=1}^n
\Big|\frac{\partial^2\mathcal H^{\mathbb V}_0}{\partial x_i^2}(\rho,x)\Big|\leq C_1\mathcal H^{\mathbb V}_0 (\rho,x)+C_2,
\end{align} with some $C_1,C_2>0.$ 
Then the system \eqref{WHflow1}--\eqref{WHflow2} admits a unique  solution on $[t_0,T]$  satisfying
\begin{align}
&\mathbb E\Big[\sup_{t\in[t_0,T]}|\mathcal H^{\mathbb V}_0(\rho(t),S(t))|^p\Big]\leq C\big(1+|\mathcal H^{\mathbb V}_0(\rho,x)|^p\big), \label{H_0estimate}\\
 &\mathbb E\Big[\sup_{r\in[t_0,t]}|\mathcal H^{\mathbb V}_0(\rho(r),S(r))-\mathcal H^{\mathbb V}_0(\rho,x)|^p\Big]\leq C(t-t_0)^{\frac p2},\label{moment-cont-1}
\end{align} 
for all $p\ge 2$, where   $C:=C(p,T)>0$ is some constant. 
\end{proposition}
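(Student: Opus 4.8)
The strategy is to recognize \eqref{WHflow1}--\eqref{WHflow2} as a randomly forced Hamiltonian ODE, to solve it by localizing on sublevel sets of the energy $\mathcal H_0^{\mathbb V}$, and then to close a global a priori estimate using the (perturbed) conservation of $\mathcal H_0^{\mathbb V}$ together with the growth bound \eqref{bound_prop}. First I would observe that, since $\mathcal H_1(\rho,x)=\sigma\rho$ is independent of $x$ and linear in $\rho$, one has $\frac{\partial}{\partial S}\mathcal H_1\equiv 0$ and $\frac{\partial}{\partial\rho}\mathcal H_1\equiv\sigma$; hence there is no Stratonovich correction and the system reduces to
\begin{align*}
\mathrm d\rho(t)=\frac{\partial}{\partial S}\mathcal H_0^{\mathbb V}(\rho(t),S(t))\,\mathrm dt,\qquad \mathrm dS(t)=-\frac{\partial}{\partial\rho}\mathcal H_0^{\mathbb V}(\rho(t),S(t))\,\mathrm dt-\sigma\,\mathrm dW(t),
\end{align*}
so the noise is additive and enters only the $S$-component.

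For local well-posedness, after shifting so that $\mathcal H_0^{\mathbb V}\ge 1$ (using ($\mathrm{Pr}$-\romannumeral1) and the boundedness of the control potential $\mathcal V$, as $\mathbb V$ takes values in a bounded set), fix $R>\mathcal H_0^{\mathbb V}(\rho,x)$ and consider the sublevel set $K_R=\{(\mu,y)\in\mathcal P^\circ(G)\times\mathbb R^n:\mathcal H_0^{\mathbb V}(\mu,y)\le R\}$. By ($\mathrm{Pr}$-\romannumeral2) one has $\min_i\mu_i\ge c(R)>0$ and $\max_{(i,j)\in E}|y_i-y_j|\le C_2(R)$ on $K_R$; since Definition \ref{def-functions}(\romannumeral2) constrains exactly the differences $y_i-y_j$ (not $\|y\|$), the local Lipschitz continuity of $\mathcal H_0$ and its Euclidean partial derivatives in Assumption \ref{energy} makes the drift coefficients Lipschitz and bounded on $K_R$. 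Extending them to globally Lipschitz functions off $K_R$ and invoking classical strong well-posedness for SDEs with Lipschitz coefficients (cf. \cite[Theorem 4.1]{cuiSIAM}) produces a unique local solution; setting $\tau_R=\inf\{t\ge t_0:\mathcal H_0^{\mathbb V}(\rho(t),S(t))\ge R\}\wedge T$ and noting that, by ($\mathrm{Pr}$-\romannumeral3) together with the Hamiltonian structure on the density manifold, $\mathcal H_0^{\mathbb V}\le R$ keeps the solution inside a compact part of $\mathcal P^\circ(G)\times\mathbb R^n$, these local solutions are consistent in $R$ and define a maximal solution up to $\tau_\infty=\lim_R\tau_R$; uniqueness on $[t_0,T]$ will follow once $\tau_\infty=T$.

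Next I would derive the energy identity: applying It\^o's formula to $\mathcal H_0^{\mathbb V}(\rho(t),S(t))$ and using the skew-symmetry of the Hamiltonian vector field (the two $\mathrm dt$-terms $\langle\partial_\rho\mathcal H_0^{\mathbb V},\partial_x\mathcal H_0^{\mathbb V}\rangle$ and $-\langle\partial_x\mathcal H_0^{\mathbb V},\partial_\rho\mathcal H_0^{\mathbb V}\rangle$ cancel), one obtains, on $[t_0,t\wedge\tau_R]$,
\begin{align*}
\mathcal H_0^{\mathbb V}(\rho(t),S(t))=\mathcal H_0^{\mathbb V}(\rho,x)+\frac12\int_{t_0}^t\sum_{i=1}^n\sigma_i^2\,\frac{\partial^2\mathcal H_0^{\mathbb V}}{\partial x_i^2}(\rho(s),S(s))\,\mathrm ds-\int_{t_0}^t\Big\langle\frac{\partial\mathcal H_0^{\mathbb V}}{\partial x}(\rho(s),S(s)),\sigma\,\mathrm dW(s)\Big\rangle.
\end{align*}
Writing $\Psi(t)=\sup_{s\in[t_0,t\wedge\tau_R]}\mathcal H_0^{\mathbb V}(\rho(s),S(s))\le R$, taking $p$-th moments, applying Burkholder--Davis--Gundy to the stochastic integral and H\"older in time to both integrals, and using \eqref{bound_prop} to bound the drift integrand by $C(C_1\mathcal H_0^{\mathbb V}+C_2)$ and the quadratic-variation integrand by $C(C_1\mathcal H_0^{\mathbb V}+C_2)^2$, I would obtain $\mathbb E[\Psi(t)^p]\le C(1+\mathcal H_0^{\mathbb V}(\rho,x)^p)+C\int_{t_0}^t\mathbb E[\Psi(s)^p]\,\mathrm ds$ with $C=C(p,T)$ \emph{independent of $R$}; Gronwall then gives $\mathbb E[\Psi(T)^p]\le C(1+\mathcal H_0^{\mathbb V}(\rho,x)^p)$. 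Since $\{\tau_R<T\}\subset\{\Psi(T)\ge R\}$, Chebyshev's inequality yields $\mathbb P(\tau_R<T)\le CR^{-p}(1+\mathcal H_0^{\mathbb V}(\rho,x)^p)\to 0$, so $\tau_R\uparrow T$ a.s.\ and the maximal solution is global and unique; passing to the limit (Fatou) gives \eqref{H_0estimate}. Finally, \eqref{moment-cont-1} follows from the same energy identity, now valid on $[t_0,T]$: taking $\sup_{r\le t}$ and $p$-th moments, the BDG/H\"older estimates together with \eqref{H_0estimate} bound the drift term by $C(t-t_0)^p$ and the martingale term by $C(t-t_0)^{p/2}$ (with $C$ depending on $p,T$ and $|\mathcal H_0^{\mathbb V}(\rho,x)|$), whence the stated rate since $t\le T$.

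The step I expect to be the main obstacle is making the a priori energy estimate uniform in the localization level $R$. Because \eqref{bound_prop} controls $\|D_x\mathcal H_0^{\mathbb V}\|$ and the $x$-Hessian only \emph{linearly} in $\mathcal H_0^{\mathbb V}$, the quadratic variation of the martingale in the energy identity grows quadratically in $\mathcal H_0^{\mathbb V}$, so the estimate cannot be closed in $L^2$ and must be carried out at the level of $p$-th moments via H\"older in time; moreover, unlike the deterministic Wasserstein--Hamiltonian system where $\mathcal H_0^{\mathbb V}$ is exactly conserved, here one must carefully track the It\^o correction and the stochastic integral. A secondary difficulty, already resolved by ($\mathrm{Pr}$-\romannumeral2)--($\mathrm{Pr}$-\romannumeral3), is guaranteeing that the flow neither reaches $\partial\mathcal P(G)$ nor lets $|x_i-x_j|$ diverge, which is precisely what allows the locally Lipschitz coefficients to be treated as globally Lipschitz on each $K_R$.
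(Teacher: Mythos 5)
Your proposal is correct and follows essentially the same route as the paper's proof: localization on energy sublevel sets with stopping times (the paper implements this by multiplying the drift by a cutoff $\psi_M(\mathcal H_0)$ rather than extending the coefficients off the sublevel set, but this is the same device), the It\^o energy identity exploiting the cancellation of the Hamiltonian drift terms and the $x$-independence of $\mathcal H_1$, and a Burkholder--H\"older--Gr\"onwall closure of the $p$-th moment estimate followed by the same machinery for the time-increment bound \eqref{moment-cont-1}.
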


\section{Main result: stochastic optimal control and HJB equation}\label{sec_4}

In this section, we present our main results on the relationship between the value function of a stochastic optimal control problem and the solution of the corresponding HJB equation. We consider \eqref{WHflow1}--\eqref{WHflow2} as the state dynamics of the system.

Fix $\ell > 0$ and define the control region 
$
B_{\ell} := \{x \in \mathbb R^n :  \|x\| \le \ell\}.
$
For $0\leq t <T$, the admissible control set is defined by
\begin{align*}
\mathscr V_{\ell}[t,T] := \Big\{ \mathbb V : \Omega \times [t,T] \to B_{\ell} \, \big| \, \mathbb V(s) \text{ is } \mathcal F_s\text{-adapted and } \|\mathbb V(s)\| \le \ell \text{ a.s. for all } s\in[t,T] \Big\}.
\end{align*} 
Given a control $\mathbb V \in \mathscr V_{\ell}[t,T]$, we define the cost functional
\begin{align}\label{def-costfunction}
\mathcal J(t, \rho, x; \mathbb V) := \mathbb E_t \Big[ \int_t^T F(s, \rho(s), S(s), \mathbb V(s)) \, \mathrm ds + h(\rho(T), S(T)) \Big],
\end{align}
where $(\rho(\cdot), S(\cdot))$ is the solution of \eqref{WHflow1}--\eqref{WHflow2} with initial conditions $\rho(t) = \rho$ and $S(t) = x$, and $\mathbb E_t[\cdot] := \mathbb E[\cdot \, | \, \mathcal F_t]$ denotes the  conditional expectation. This functional represents the accumulated expected cost over time, including both the running cost $F$ and the terminal cost $h$. We define the \emph{value function} for 
the stochastic optimal control problem:
for $(t, \rho, x) \in [0,T] \times \mathcal P^{\circ}(G) \times \mathbb R^n$, 
\begin{align}\label{def-valuefunction}
U(t, \rho, x):= \inf_{\mathbb V \in \mathscr V_{\ell}[t,T]} \mathcal J(t, \rho, x; \mathbb V).
\end{align}
The value function characterizes the minimal expected cost starting from state $(\rho,x)$ at time $t$.

When the cost functions $F$ and $h$ are nonnegative, the infimum of $\mathcal J$ always exists. Moreover, a minimizing sequence can be found: for any $(t, \rho, x) \in [0,T] \times \mathcal P^{\circ}(G) \times \mathbb R^n$, there exists a sequence $\{\mathbb V_k\}_{k \ge 1} \subset \mathscr V_{\ell}[t,T]$ such that
\[
\mathcal J(t, \rho, x; \mathbb V_k) \to U(t, \rho, x) \quad \text{a.s. as } k \to \infty.
\]
We refer to \cite[Lemma 4.4]{YongJM} for further details. 
We point out that the existence of the infimum of $\mathcal J$ does not necessarily guarantee the existence of an optimal control $\mathbb V^*$, i.e., the infimum may not be attained by any admissible control. In this work, we do not focus on the existence of an optimal control. Instead, we aim to study the value function \eqref{def-valuefunction} through the dynamic programming principle and to characterize it as a \emph{viscosity solution} of the associated HJB equation. 

To introduce the HJB equation, we now  
 define the functional
\begin{align}\label{def-mathbbH}
\mathbb H(t, \rho, x, \mathbb V, p, q, Q) := \langle p, D_x\mathcal H^{\mathbb V}_0(\rho,x) \rangle - \langle q, D_{\rho}\mathcal H^{\mathbb V}_0(\rho,x) \rangle + \frac{1}{2} \mathrm{tr}(\sigma \sigma^\top Q) + F(t, \rho, x, \mathbb V),
\end{align}
where $t \in [0,T], \rho \in \mathcal P^{\circ}(G), x \in \mathbb R^n, \mathbb V \in B_\ell, p,q \in \mathbb R^n, Q \in \mathcal S^{n \times n}$, and $\mathcal H^{\mathbb V}_0$ is defined by \eqref{def-H0}.   
The Hamiltonian functional is defined as 
\begin{align}\label{def_H}
H(t, \rho, x, p, q, Q) :&= \inf_{\mathbb V \in B_\ell} \mathbb H(t, \rho, x, \mathbb V, p, q, Q) 
\notag\\
&= \langle p, D_x\mathcal H_0(\rho,x) \rangle - \langle q, D_{\rho}\mathcal H_0(\rho,x)  \rangle + \frac{1}{2} \mathrm{tr}(\sigma \sigma^\top Q) - \widehat F(t, \rho, x, q),
\end{align}
where
\[
\widehat F(t, \rho, x, q) := \sup_{\mathbb V \in B_\ell} \big\{ \langle q, \mathbb V \rangle - F(t, \rho, x, \mathbb V) \big\}.
\] Then the associate HJB equation is given as follows 
\begin{align}\label{HJB}
&\frac{\partial U}{\partial t}  + H(t, \rho, x, \partial_{\rho} U, D_x U, D^2_x U) = 0, \quad (t, \rho, x) \in (0,T) \times \mathcal P^{\circ}(G) \times \mathbb R^n,\\
&U(T, \rho, x) = h(\rho, x),\notag
\end{align}
where $\partial_{\rho} U$ denotes the Fr\'echet  derivative with respect to $\rho\in\mathcal P^{\circ}(G)$ (Definition \ref{def-derivatives}), and $D_xU$, $D^2_x U$ are the first- and second-order Euclidean derivatives with respect to $x\in\mathbb R^n$.  
\begin{remark}\label{second-order1}
We would like to mention that the HJB equation \eqref{HJB} is of second-order only in the $x$ variable, which is due to that
the perturbed energy $\mathcal H_1$ \eqref{def-H1} is assumed to be independent of $x$. An interesting and valuable direction for future study is the case where $\mathcal H_1$ depends on both $\rho$ and $x$, which results in the second-order Wasserstein derivatives in both $\rho$ and $x$ variables for the underlying HJB equation. 
\end{remark}
Next, we introduce the definition of the viscosity solution of \eqref{HJB}; see e.g. \cite[Definition I.1, Proposition I.1]{LionsII} and  \cite[Definition 2]{Crandall1}.

\begin{definition}\label{def1-1}
(\romannumeral1) 
A continuous function $U$ is called a viscosity subsolution of \eqref{HJB} if $U(T,\cdot) \le h(\cdot)$ and for every $(t_0, \rho_0, x_0) \in (0,T) \times \mathcal P^{\circ}(G) \times \mathbb R^n$ and every $\varphi \in \mathcal C^{1,1,2}((0,T)\times\mathcal P^{\circ}(G)\times\mathbb R^n)$ such that $U-\varphi$ has a local maximum at $(t_0, \rho_0, x_0)$, 
we have
\begin{align}\label{viscosity-sub}
 \frac{\partial\varphi}{\partial t}(t_0, \rho_0, x_0) + H(t_0, \rho_0, x_0, \partial_{\rho} \varphi(t_0, \rho_0, x_0), D_x \varphi(t_0, \rho_0, x_0), D^2_x \varphi(t_0, \rho_0, x_0)) \ge 0.
\end{align}

(\romannumeral2) A continuous function $U$ is called a viscosity supersolution of \eqref{HJB}  if $U(T,\cdot) \ge h(\cdot)$ and for every $(t_0, \rho_0, x_0) \in (0,T) \times \mathcal P^{\circ}(G) \times \mathbb R^n$ and every $\varphi \in \mathcal C^{1,1,2}((0,T)\times\mathcal P^{\circ}(G)\times\mathbb R^n)$ such that $U-\varphi$ has a local minimum at $(t_0, \rho_0, x_0)$, 
we have 
\begin{align}\label{viscosity-super}
 \frac{\partial\varphi}{\partial t}(t_0, \rho_0, x_0) + H(t_0, \rho_0, x_0, \partial_{\rho} \varphi(t_0, \rho_0, x_0), D _x\varphi(t_0, \rho_0, x_0), D^2_x \varphi(t_0, \rho_0, x_0))\le 0.
\end{align}

(\romannumeral3)  A continuous function $U$ is called a viscosity solution of \eqref{HJB} if it is both a viscosity subsolution and a viscosity supersolution. 
\end{definition}

It can be verified that any classical solution $U \in \mathcal C^{1,1,2}$ of \eqref{HJB} is automatically a viscosity solution, see e.g. \cite{Crandall1,book-controlled,Achdou}. 
In general, the value function may not be smooth enough to satisfy \eqref{HJB} in the sense of classical solution. Instead, under suitable assumptions on the running cost $F$ and terminal cost $h$, we show that the value function is indeed a viscosity solution. Recall that $B_{\ell}$ is the control region.

\begin{assumption}\label{ass1}
(\romannumeral1) The functionals $F: [0,T] \times \mathcal P^{\circ}(G) \times \mathbb R^n \times B_\ell \to \mathbb R$ and $h: \mathcal P^{\circ}(G) \times \mathbb R^n \to \mathbb R$ are nonnegative and continuous. 

(\romannumeral2) For $s\in[0,T]$ and $\mathbb V\in B_{\ell},$ functionals $F(s,\cdot,\cdot,\mathbb V)$ and $ h$ are locally Lipschitz continuous in the sense given by \eqref{locally-Lip}.

(\romannumeral3) For $(\rho,x,\mathbb V)\in\mathcal P^{\circ}(G)\times\mathbb R^n\times B_{\ell},$ $F(\cdot,\rho,x,\mathbb V)$ is globally Lipschitz continuous on $[0,T].$  

(\romannumeral4) There exists constant $p_1 \ge  0$ such that for all $s \in [0,T],\mathbb V\in B_{\ell}$,
\begin{align}\label{ass11}
F(s, \rho, x, \mathbb V) \vee h(\rho, x) \le C \big( |\mathcal H_0(\rho, x)|^{p_1}  + 1 \big),
\end{align} 
for some constant $C := C(T, p_1, \ell) > 0$, where $\mathcal H_0$ is given in \eqref{def-H0}.  
\end{assumption}

The following theorem states that the value function $U$ of the stochastic optimal control problem is a viscosity solution of the HJB equation \eqref{HJB}.  This, in particular,  implies the existence of the viscosity solution of \eqref{HJB}. The proof is given in Section \ref{sec_33}.  \begin{theorem}\label{thm_existence}
Let Assumptions \ref{assumption_g}, \ref{energy},  \ref{ass1} and condition \eqref{bound_prop} hold. Then  
the value function $U$ defined by \eqref{def-valuefunction} is a viscosity solution of \eqref{HJB} in $(0,T)\times\mathcal P^{\circ}(G)\times\mathbb R^n$.
\end{theorem}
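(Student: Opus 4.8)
The proof will follow the classical dynamic programming scheme, but everything has to be localized to sublevel sets of the energy $\mathcal H_0^{\mathbb V}$ in order to handle the unbounded momentum variable and the degeneration of the coefficients of \eqref{WHflow1}--\eqref{WHflow2} as $\rho\to\partial\mathcal P(G)$. \textbf{First}, I would establish the Bellman principle of optimality (Proposition~\ref{dynamic-pro}): for all $0\le t\le\widehat t\le T$ and every $[t,\widehat t]$-valued stopping time $\tau$,
\begin{align*}
U(t,\rho,x)=\inf_{\mathbb V\in\mathscr V_\ell[t,\widehat t]}\mathbb E_t\Big[\int_t^\tau F(s,\rho(s),S(s),\mathbb V(s))\,\mathrm ds+U(\tau,\rho(\tau),S(\tau))\Big].
\end{align*}
The ``$\le$'' part comes from concatenating an arbitrary control on $[t,\tau]$ with an $\varepsilon$-optimal one on $[\tau,T]$, using the flow/uniqueness property from Proposition~\ref{exact} and the tower property of conditional expectation; the ``$\ge$'' part follows by conditioning and taking the infimum over the restriction to $[\tau,T]$. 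The a priori bound $0\le U(t,\rho,x)\le C(1+|\mathcal H_0(\rho,x)|^{p_1})$, obtained from Assumption~\ref{ass1}(i),(iv) and \eqref{H_0estimate} (note $|\mathcal V(\rho)|\le\ell$, so $\mathcal H_0^{\mathbb V}$ and $\mathcal H_0$ differ by at most $\ell$), guarantees finiteness of $U$ and the integrability of all quantities involved.

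\textbf{Second}, I would prove the continuity of $U$ on $[0,T]\times\mathcal P^\circ(G)\times\mathbb R^n$, together with the above polynomial growth (Proposition~\ref{conti}) and the identity $U(T,\cdot,\cdot)=h$ (immediate from \eqref{def-valuefunction}). Continuity in $t$ uses the Lipschitz continuity of $F$ in time (Assumption~\ref{ass1}(iii)), the local Lipschitz continuity of $F,h$ in $(\rho,x)$ (Assumption~\ref{ass1}(ii)), and the moment-continuity estimate \eqref{moment-cont-1}; the device is to stop the trajectory when $\mathcal H_0^{\mathbb V}(\rho(\cdot),S(\cdot))$ first exceeds a fixed level $R$, which by property ($\mathrm{Pr}$-\romannumeral3) in Definition~\ref{def-functions} confines $\rho(\cdot)$ to a compact subset of $\mathcal P^\circ(G)$ and keeps $\max_{(i,j)\in E}|S_i(\cdot)-S_j(\cdot)|$ bounded, so that the local Lipschitz bounds apply, while \eqref{H_0estimate} controls the exceedance probability. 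For continuity in $(\rho,x)$ one needs a continuous-dependence estimate for \eqref{WHflow1}--\eqref{WHflow2}: since $\mathcal H_1(\rho,x)=\sigma\rho$ is independent of $x$ and $\sigma$ is constant, the Stratonovich corrections vanish and the system reduces to $\mathrm d\rho=D_x\mathcal H_0^{\mathbb V}\,\mathrm dt$, $\mathrm dS=-D_\rho\mathcal H_0^{\mathbb V}\,\mathrm dt-\sigma\,\mathrm dW$; on the energy-truncation event the drifts are Lipschitz (Assumption~\ref{energy}), so a Gronwall argument controls $\|\rho(s)-\widetilde\rho(s)\|+\|S(s)-\widetilde S(s)\|$, and the complementary event has probability controlled by \eqref{H_0estimate}, uniformly in $\mathbb V\in\mathscr V_\ell$.

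\textbf{Third}, I would derive the viscosity inequalities. For the subsolution property, take $\varphi\in\mathcal C^{1,1,2}$ with $U-\varphi$ having a local maximum at $(t_0,\rho_0,x_0)$, normalized so that $U=\varphi$ there and $U\le\varphi$ on a neighbourhood $N$; fix a constant control $\mathbb V\in B_\ell$ and let $\tau_\delta$ be the minimum of $t_0+\delta$, the exit time of $(s,\rho(s),S(s))$ from $N$, and the first time $\mathcal H_0^{\mathbb V}(\rho(s),S(s))$ exceeds a large fixed level, so that $\mathbb P(\tau_\delta<t_0+\delta)=O(\delta^{p/2})$ by \eqref{moment-cont-1}. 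The localized Bellman principle gives $\varphi(t_0,\rho_0,x_0)\le\mathbb E_{t_0}[\int_{t_0}^{\tau_\delta}F\,\mathrm ds+\varphi(\tau_\delta,\rho(\tau_\delta),S(\tau_\delta))]$. Applying It\^o's formula to $\varphi$ along the trajectory — the noise being additive and constant, the stochastic term is a martingale with square-integrable integrand on $[t_0,\tau_\delta]$ — and using the mass conservation $\sum_i\mathrm d\rho_i(s)=0$ together with \eqref{Fre2} to replace $\langle D_\rho\varphi,D_x\mathcal H_0^{\mathbb V}\rangle$ by $\langle\partial_\rho\varphi,D_x\mathcal H_0^{\mathbb V}\rangle$, the drift of $\mathrm d\varphi$ plus $F$ equals $\frac{\partial\varphi}{\partial t}+\mathbb H(s,\rho(s),S(s),\mathbb V,\partial_\rho\varphi,D_x\varphi,D^2_x\varphi)$. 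Taking $\mathbb E_{t_0}$, dividing by $\delta$, using $\mathbb E_{t_0}[\tau_\delta-t_0]/\delta\to1$, and letting $\delta\to0$ yields $0\le\frac{\partial\varphi}{\partial t}(t_0,\rho_0,x_0)+\mathbb H(t_0,\rho_0,x_0,\mathbb V,\partial_\rho\varphi,D_x\varphi,D^2_x\varphi)$; since $\mathbb V\in B_\ell$ is arbitrary and $H=\inf_{\mathbb V}\mathbb H$ by \eqref{def_H}, \eqref{viscosity-sub} follows. The supersolution property is symmetric: with $U-\varphi$ having a local minimum at $(t_0,\rho_0,x_0)$, the ``hard'' direction of the Bellman principle furnishes, for each $\varepsilon,\delta>0$, a control $\mathbb V^{\varepsilon,\delta}\in\mathscr V_\ell[t_0,t_0+\delta]$ with $\varphi(t_0,\rho_0,x_0)+\varepsilon\delta\ge\mathbb E_{t_0}[\int_{t_0}^{\tau_\delta}F(s,\cdot,\cdot,\mathbb V^{\varepsilon,\delta}(s))\,\mathrm ds+\varphi(\tau_\delta,\rho(\tau_\delta),S(\tau_\delta))]$; It\^o's formula and $\mathbb H(\cdot,\mathbb V^{\varepsilon,\delta}(s),\cdot)\ge H(\cdot)$ give $\varepsilon\delta\ge\mathbb E_{t_0}[\int_{t_0}^{\tau_\delta}(\frac{\partial\varphi}{\partial t}+H)(s,\rho(s),S(s),\partial_\rho\varphi,D_x\varphi,D^2_x\varphi)\,\mathrm ds]$, and dividing by $\delta$, using the uniform continuity of $\frac{\partial\varphi}{\partial t}+H$ on the compact truncation region together with $\mathbb E_{t_0}[\tau_\delta-t_0]/\delta\to1$ uniformly in the control, and letting $\delta\to0$ then $\varepsilon\to0$ gives \eqref{viscosity-super}. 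Combined with $U(T,\cdot,\cdot)=h$, this shows $U$ is a viscosity solution of \eqref{HJB}.

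\textbf{The main obstacle} is not the It\^o/Bellman computation but Step~2 and the localization machinery behind it: the coefficients of \eqref{WHflow1}--\eqref{WHflow2} are only locally Lipschitz and blow up at $\partial\mathcal P(G)$, and there are no moment bounds on $(\rho(\cdot),S(\cdot))$ themselves — only on the energy, via Proposition~\ref{exact} — so every estimate must be run on the event $\{\sup_{[t_0,\cdot]}\mathcal H_0^{\mathbb V}(\rho,S)\le R\}$, with the complementary event estimated uniformly in $\mathbb V$; extracting the continuity of $U$ (hence even the well-posedness of the viscosity framework) and the validity of the stopping-time Bellman principle from this is the delicate point. A secondary subtlety is the systematic passage from the Euclidean gradient $D_\rho\varphi$ produced by It\^o's formula to the Fr\'echet derivative $\partial_\rho\varphi$ appearing in \eqref{HJB}, which is legitimate precisely because the $\rho$-increments lie in the tangent space $\{v\in\mathbb R^n:\sum_i v_i=0\}$.
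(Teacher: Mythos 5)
Your overall architecture coincides with the paper's: a Bellman principle (Proposition \ref{dynamic-pro}), continuity and polynomial growth of $U$ via energy-level truncation with Gronwall estimates on the truncated event and Chebyshev/moment bounds on its complement (Proposition \ref{conti}, using \eqref{H_0estimate} and \eqref{moment-cont-1}), and then It\^o's formula up to a localizing stopping time to extract the sub- and supersolution inequalities. Your identification of the main obstacles (absence of moment bounds on the state itself, boundary degeneration, passage from $D_\rho\varphi$ to $\partial_\rho\varphi$ via tangency of the $\rho$-increments) matches the paper's.

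The one substantive divergence is how the stopping time enters the dynamic programming step. You state and then use a Bellman principle valid for \emph{stopping times} $\tau$ (and, in the supersolution step, for an exit time $\tau_\delta$ that depends on the very control $\mathbb V^{\varepsilon,\delta}$ being selected). That version of the DPP is strictly stronger than what your Step 1 sketch actually delivers: concatenating an $\varepsilon$-optimal control at a random time requires a measurable selection of near-optimal controls as a function of the random state $(\rho(\tau),S(\tau))$, which is the classically delicate part and which you do not address. The paper deliberately avoids this: Proposition \ref{dynamic-pro} is proved only for deterministic intermediate times $\bar t$, and the localization is then handled \emph{a posteriori} by splitting $U(t,\rho(t),S(t))-U(t_0,\rho_0,x_0)$ into a term evaluated at $t\wedge\tau_r$ (where It\^o applies with bounded coefficients) and remainder terms $I_2$, $J_2$ (see \eqref{split-Iestimate}, \eqref{I-estimate}, \eqref{split2-estimate}), which are shown to be $o(t-t_0)$ via the two-stopping-time Chebyshev estimates $\mathbb P(\tilde\tau_\alpha\ge t>\tau_r)\le C_\alpha r^{-p}(t-t_0)^{p/2}$ and $\mathbb P(\tilde\tau_\alpha<t)\le C\alpha^{-p}(t-t_0)^{p/2}$ together with the growth bound \eqref{growth-U}. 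Your route is viable if you genuinely prove the stopping-time DPP, but as written that step is asserted rather than established; either supply that proof or replace it by the paper's deterministic-DPP-plus-remainder-estimate device, which is exactly what the machinery you already set up in Step 2 is designed to feed.
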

Next we present the uniqueness of the viscosity solution of the HJB equation \eqref{HJB}, whose proof is postponed to Section \ref{sec_5}. 
 To this end, we further impose the following assumption on $\mathcal H_0$. 
\begin{assumption}\label{ass_domi}
Suppose that for each $(\rho,x)\in\mathcal P^{\circ}(G)\times\mathbb R^n,$ it holds 
\begin{align}\label{assu4-eq}D_x\mathcal H_0(\rho,x)\in T_{\rho}\mathcal P(G),\;\;\text{ i.e., }  \sum_{i=1}^n(D_x\mathcal H_0(\rho,x))_i=0.
\end{align} In addition assume that 
\begin{align}
\lim_{R\to\infty}\mathbf 1_{\{\mathcal H_0(\rho, x)\in (R,2R)\}}\Big[\frac{1}{R} \|D^2_x\mathcal H_0(\rho, x)\|+\frac{1}{R^{2-\beta}}\|D_x\mathcal H_0(\rho, x)\|^2\Big]=0,\label{cond-unique1}
\end{align} 
for some $\beta\in(0,1).$ 
\end{assumption}

\begin{remark}
The condition \eqref{assu4-eq} is imposed for several reasons. First, it encompasses the dominant energy associated with the stochastic Schr\"odinger equation (see Corollaries \ref{coro-1} and \ref{coro-2} in Section \ref{sec_6}). Second, it ensures the preservation of the mass of the density $\{\rho(t)\}_{t\in[0,T]}$ in the SWHS \eqref{WHflow1}. Third, it is introduced for technical purposes in the proof of the uniqueness, to overcome the difficulty caused by structural properties of coefficients for the SWHS  
(see \eqref{technical-eq} in Section \ref{sec_5}).
\end{remark}

\begin{theorem}\label{uniqueness}
Let Assumptions \ref{assumption_g}, \ref{energy},  \ref{ass1},  \ref{ass_domi} and condition \eqref{bound_prop} hold. In addition assume that costs $F$ and $h$ are uniformly bounded.  Then there exists a unique bounded viscosity solution $U$ of \eqref{HJB} in $(0,T)\times\mathcal P^{\circ}(G)\times\mathbb R^n$.    
\end{theorem}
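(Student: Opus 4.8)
The existence of a bounded viscosity solution follows from Theorem \ref{thm_existence} once we observe that the boundedness of $F$ and $h$ forces the value function $U$ to be bounded. So the plan is to prove a comparison principle: if $U_1$ is a bounded viscosity subsolution and $U_2$ a bounded viscosity supersolution of \eqref{HJB}, then $U_1 \le U_2$ on $(0,T)\times\mathcal P^{\circ}(G)\times\mathbb R^n$. The overall strategy is the doubling of variables method, but the domain $\mathcal P^{\circ}(G)\times\mathbb R^n$ is unbounded in the $x$-direction and "open" toward $\partial\mathcal P(G)$ in the $\rho$-direction, and — as flagged in the introduction — there are no moment bounds on $\rho(\cdot)$ or $S(\cdot)$ themselves, only on the energy $\mathcal H_0$. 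The fix is an energy-truncation: fix a large level $R$, introduce a smooth cut-off $\chi_R$ of the energy $\mathcal H_0(\rho,x)$ (equal to $1$ on $\{\mathcal H_0\le R\}$, supported in $\{\mathcal H_0\le 2R\}$), and pass to a truncated HJB equation \eqref{HJB_R} that agrees with \eqref{HJB} on the sublevel set $\{\mathcal H_0 \le R\}$. On the support of $\chi_R$, property ($\mathrm{Pr}$-ii) gives $\min_i\rho_i \ge c(R)>0$ and $\sup_{(i,j)\in E}|x_i-x_j|\le C(R)$, so the effective domain is compact in the relevant directions; the barrier-type behavior ($\mathrm{Pr}$-iii) of $\mathcal H_0$ near $\partial\mathcal P(G)$ keeps the maximizer away from the boundary.

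The core argument is as follows. Suppose $\sup (U_1 - U_2) = 2\delta > 0$. For parameters $\varepsilon,\lambda>0$ introduce the doubled functional on two copies of the state space,
\begin{align*}
\Phi_{\varepsilon,\lambda}(t,\rho,x,s,\mu,y) = U_1(t,\rho,x) - U_2(s,\mu,y) - \frac{1}{2\varepsilon}\big(|t-s|^2 + \mathcal W(\rho,\mu)^2 + \|x-y\|^2\big) - \lambda\,\Psi(\rho,x) - \frac{\gamma}{T-t},
\end{align*}
where $\Psi$ is a coercive penalization built from the energy (e.g. $\Psi = \mathcal H_0$, or a barrier function that blows up at $\partial\mathcal P(G)$ and grows in the Euclidean $x_i-x_j$ directions), as in the auxiliary functionals \eqref{auxi-1}--\eqref{auxi-2}; here $\mathcal W$ is the graph Wasserstein metric \eqref{MK}. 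Coercivity of $\Psi$ plus the energy-truncation localizes the maximizer $(\hat t,\hat\rho,\hat x,\hat s,\hat\mu,\hat y)$ to a compact subset of $(0,T)\times\mathcal P^{\circ}(G)\times\mathbb R^n$ (times a copy). Standard penalization estimates give, as $\varepsilon\to 0$: $\tfrac{1}{\varepsilon}(|\hat t-\hat s|^2+\mathcal W(\hat\rho,\hat\mu)^2+\|\hat x-\hat y\|^2)\to 0$, and the maximizer converges (along subsequences) to a diagonal point where $U_1-U_2$ is nearly maximal. Then I would apply the viscosity subsolution inequality for $U_1$ at $(\hat t,\hat\rho,\hat x)$ with test function the $(s,\mu,y)$-slice of the penalization, and the supersolution inequality for $U_2$ at $(\hat s,\hat\mu,\hat y)$ with the $(t,\rho,x)$-slice; subtracting the two inequalities and sending $\varepsilon\to 0$, then $\lambda\to 0$, then $\gamma\to 0$, then $R\to\infty$, should yield the contradiction $0 < c\,\delta \le 0$ (the $\gamma/(T-t)$ term handles the terminal condition).

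The main obstacle — and this is exactly what condition \eqref{cond-unique1} in Assumption \ref{ass_domi} is designed for — is controlling the Hamiltonian difference
\begin{align*}
H(\hat t,\hat\rho,\hat x, p_1,q_1,Q_1) - H(\hat s,\hat\mu,\hat y, p_2,q_2,Q_2)
\end{align*}
at the maximum point, where the adjoint variables come from the penalization. The terms $\langle p, D_x\mathcal H_0\rangle$ and $\langle q, D_\rho\mathcal H_0\rangle$ must be estimated using the near-coincidence of $(\hat\rho,\hat x)$ and $(\hat\mu,\hat y)$ together with the local Lipschitz regularity of $\mathcal H_0$ and its derivatives (Assumption \ref{energy}), which is available precisely because the truncation keeps us on a compact set. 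The genuinely delicate point is that the penalization parameter $\lambda$ contributes terms of size $\lambda\,D_x\mathcal H_0(\hat\rho,\hat x)$ into the $x$-adjoint and $\lambda\,D^2_x\mathcal H_0$ into the Hessian slot, and these interact with the (possibly large) energy level of the maximizer: on the energy band $\{\mathcal H_0\in(R,2R)\}$ one needs $\tfrac{1}{R}\|D^2_x\mathcal H_0\| + \tfrac{1}{R^{2-\beta}}\|D_x\mathcal H_0\|^2 \to 0$ so that these contributions vanish in the iterated limit. The condition \eqref{assu4-eq}, $D_x\mathcal H_0 \in T_\rho\mathcal P(G)$, is what makes the Fréchet-derivative / Wasserstein-gradient pairing in the first term of $H$ well-defined and consistent under the doubling (cf. \eqref{technical-eq} referenced in Section \ref{sec_5}), i.e. it reconciles the $\partial_\rho U$ pairing with the Euclidean pairing that the penalization naturally produces. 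Once these structural estimates are in place, the iterated passage to the limit is routine, and uniqueness follows by symmetry.
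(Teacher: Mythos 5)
Your proposal has the right skeleton (energy truncation plus doubling of variables plus iterated limits), but it contains two genuine gaps. The first and most serious concerns the second-order term. The HJB equation \eqref{HJB} carries $\tfrac12\mathrm{tr}(\sigma\sigma^\top D^2_xU)$, and if you simply test the subsolution $U_1$ with the $(s,\mu,y)$-frozen slice of the penalization and the supersolution $U_2$ with the $(t,\rho,x)$-frozen slice, the Hessians of the test functions are $\tfrac{1}{\varepsilon}I_n+O(\lambda)$ and $-\tfrac{1}{\varepsilon}I_n+O(\lambda)$ respectively, so subtracting the two viscosity inequalities produces the term $\tfrac{1}{\varepsilon}\mathrm{tr}(\sigma\sigma^\top)$ on the side of the inequality that reads ``$\ge 0$''; as $\varepsilon\to0$ this inequality becomes vacuous and no contradiction results. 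One needs the coupled matrix inequality relating $D^2_xU_1$ and $D^2_yU_2$ at the joint maximum (so that the singular $1/\varepsilon$ contributions cancel, as in \eqref{equality-xxyy}--\eqref{compoY1}, leaving only an $O(a)$ remainder). The paper obtains this by replacing $U\phi_{R,0}$ and $\widetilde U\phi_{R,0}$ with their sup-/inf-convolutions (Lemma \ref{uniform-converge}), invoking Alexandrov's theorem and Jensen's lemma to find a nearby twice-differentiable maximizer, and proving that the convolutions remain viscosity sub-/super-solutions of perturbed truncated Hamiltonians on a shrunken domain (Lemmas \ref{lem1} and the companion for $(H^R_U)_\vartheta$). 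None of this machinery, or an equivalent such as the Crandall--Ishii maximum principle, appears in your plan, and without it the argument fails whenever $\sigma\ne0$.

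The second gap is the choice of $\mathcal W(\rho,\mu)^2$ as the doubling penalization in the $\rho$-direction. The test functions in Definition \ref{def1-1} must be $\mathcal C^{1,1,2}$, and the squared graph Wasserstein distance \eqref{MK} is defined by a variational problem whose value need not be differentiable in $\rho$ (the metric tensor $g_{ij}(\rho)$ degenerates near $\partial\mathcal P(G)$); you would have no usable expression for its Fr\'echet derivative at the maximizer. The paper instead penalizes with the Euclidean $\|\rho-\mu\|^2$, whose Fr\'echet derivative is computed explicitly via \eqref{Fre2} and fed into \eqref{equality-rho}--\eqref{equality-mu}. Two smaller points: the paper does not merely add a coercive penalization but multiplies the solutions themselves by $\phi_{R,0}$ and works with the truncated equation \eqref{HJB_R} (Lemma \ref{coro2_local}); and the role of \eqref{assu4-eq} is specifically to produce the cancellation \eqref{technical-eq} of the cross terms $\langle\partial_\rho\phi_{R,0},D_x\mathcal H_0\rangle-\langle D_x\phi_{R,0},D_\rho\mathcal H_0\rangle$ generated by differentiating the product $U\phi_{R,0}$, not to make the $\partial_\rho U$ pairing ``well-defined''. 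Your identification of where \eqref{cond-unique1} enters (the energy band $\{\mathcal H_0\in(R,2R)\}$) is correct in spirit, but it corresponds to the terms $II_{4,3}$ and $II_6$ of the truncated Hamiltonian, not to a free penalization parameter $\lambda$.
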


\section{Applications to stochastic 
Schr\"odinger equations on graphs}\label{sec_6}
In this section, we consider two important examples of the SWHS \eqref{WHflow1}--\eqref{WHflow2}: the stochastic Schr\"odinger equation (SSE) with polynomial nonlinearity on a finite graph \cite{ZhouJFA,Cui23}, and the stochastic logarithmic Schr\"odinger equation \cite{cuiSIAM} on   a finite graph graph. 
Then we apply the existence and uniqueness results of HJB equations (Theorems  \ref{thm_existence} and \ref{uniqueness}) to the optimal control problems for these quantum models on graphs.

To give a concrete example of the dominant energy, 
we introduce several functionals on $\mathcal P^{\circ}(G)\times\mathbb R^n$.  
First, we define the kinetic energy as
\begin{align}\label{kinetic}
K(\rho,x ) 
= \frac12\sum_{i=1}^n\sum_{j\in N(i)}\omega_{ij}(x_i-x_j)^2g_{ij}(\rho),
\end{align}
where $g$ is the probability weight satisfying Assumption \ref{assumption_g}. 
 The interaction potential and  the discrete entropy are defined respectively as
\begin{align}\label{entropy}\mathcal W (\rho ) = \frac12 \sum_{(i,j)\in E} \mathbb W_{ij}\rho_i\rho_j,\quad \mathcal L(\rho)=\sum^n
_{i=1}(\log(\rho_i)\rho_i - \rho_i).
\end{align} 
The discrete Fisher information on the graph  is given by \begin{align}\label{Fisher}
 \mathcal I(\rho ) = \frac12 
\sum_{i=1}^n
\sum_{j\in N(i)}
\widetilde{\omega}_{ij}\,|\log(\rho_i) - \log(\rho_j)|^2\,\tilde g_{ij}(\rho),
\end{align}
where $(\widetilde\omega,\tilde g)$ denotes another pair of edge weight and probability weight on $G$.  
Here, we take $\tilde g_{ij}(\rho):=\frac{\rho_i-\rho_j}{\log \rho_i-\log \rho_j}$, and assume $\omega_{ij}=\widetilde\omega_{ij}$ for simplicity.  Note that the Fisher information acts as a repelling force, penalizing the energy when $\rho$ approaches the boundary of $\mathcal P(G)$.

\subsection{Stochastic Schr\"odinger equation with polynomial nonlinearity on a graph}\label{example-NLS} 
In the case of the Euclidean domain, the SSE with polynomial nonlinearity was initially studied by \cite{SSE1,SSE3}, establishing it as a prototypical model for nonlinear waves in nonlinear optics and plasma physics. With the growing importance of graph-structured domains, this model was subsequently extended to graphs  \cite{ZhouJFA,Cui23}. The resulting SSE on graphs with polynomial nonlinearity serves as a fundamental framework for investigating phenomena at the intersection of graph topology, nonlinear interactions, and stochastic perturbations, with applications to the dynamics of complex networks.

To formulate the SSE with polynomial nonlinearity on a graph, we take the  dominant energy as 
\begin{align}\label{H_0-example}
\mathcal H^{\mathbb V}_0(\rho ,S) &:= \mathcal H_0(\rho,x)+\mathcal V (\rho )\text{ with }\mathcal H_0(\rho,x):=K(\rho,x ) + \tfrac18 \mathcal I(\rho ) + \mathcal W (\rho ),
\end{align}
and $\mathcal H_1$ is given by \eqref{def-H1}. Plugging $\mathcal H^{\mathbb V}_0,\mathcal H_1$ into \eqref{WHflow1}  and \eqref{WHflow2} yields  
\begin{align*}
&\mathrm d\rho_i+ \sum_{j\in N(i)} \omega_{ij}(S_j-S_i)g_{ij}(\rho)\,\mathrm dt=0,\\
&\mathrm dS_i+\frac12 \sum_{j\in N(i)} \omega_{ij}(S_i - S_j)^2\frac{ \partial g_{ij}(\rho )}{ \partial \rho_i}\,\mathrm dt+\frac18 \frac{\partial \mathcal I(\rho)}{\partial \rho_i}\,\mathrm dt+\mathbb V_i\,\mathrm dt
+\sum_{j=1}^n\mathbb W_{ij}\rho_j\,\mathrm dt + \sigma_i\,\mathrm dW_i(t)= 0,
\end{align*} 
where $i\in V,$ and $W_1(t),\ldots,W_n(t),t\ge 0$ are real-valued independent standard Brownian motions. 
Via the Madelung transformation $u_j(t) = \sqrt{\rho_j (t)}e^{\mathbf iS_j(t)},\ j\in V$, we obtain the complex SSE with polynomial nonlinearity on the graph $G$ \cite{ZhouJFA,Cui23}:
\begin{align}\label{com_NLS}
\mathbf i\mathrm du_j= \Big( -\tfrac12(\Delta_Gu)_j + u_j\mathbb V_j + u_j \sum_{l=1}^n\mathbb W_{jl}| u_l|^2\Big)\,\mathrm dt 
+ \sigma_j u_j \circ \mathrm dW_j(t).
\end{align} 
Here $\Delta_Gu$ denotes the nonlinear  Laplacian on the graph \cite{ZhouJFA}, defined  by
\begin{align}\label{nonlinear_lap}
(\Delta_Gu)_j &= -\frac{ u_j}{| u_j|^2} \Big( \sum_{l\in N(j)}\omega_{jl}( \log(u_j)-\log(u_l))g_{jl}
+ \sum_{l\in N(j)}\widetilde {\omega}_{jl}\tilde{g}_{jl}\Re (\log(u_j)-\log(u_l))\Big) \notag\\
&\quad -u_j\Big(\sum_{l\in N(j)}\omega_{jl} \frac{\partial g_{jl}}{\partial \rho_j}|\log(u_j)-\log(u_l)|^2+
 \sum_{l\in N(j)}\widetilde {\omega}_{jl} \frac{\partial \tilde g_{jl}}{\partial \rho_j}|\Re (\log(u_j)-\log(u_l))|^2\Big),
\end{align}
where $j\in V$.

For a test function $\varphi\in\mathcal C^1(\mathcal P^{\circ}(G)),$ we note that when $\partial_{\rho}\varphi$ is continuous, by definition of $\mathcal H_0$ \eqref{H_0-example}, the integration by parts formula \eqref{IBP},  and the relation between $\nabla_{\mathcal W}$ and $\partial_{\rho}$ \eqref{relation-derivative}, we have 
\begin{align}\label{relation22}
&\langle \partial_{\rho}\varphi,D_x\mathcal H_0\rangle=\Big\langle\partial_{\rho}\varphi,\Big(\sum_{j\in N(i)}\omega_{ij}(x_i-x_j)g_{ij}(\rho)\Big)_{i\in V}\Big\rangle=\langle \nabla_{\mathcal W}\varphi,\nabla_Gx\rangle_{\rho},
\end{align}
where $\langle\cdot,\cdot\rangle_{\rho}$ is the inner product on the space $\mathbb H_{\rho}$ (see \eqref{g1} for its definition).  
This shows that, for the SSE \eqref{com_NLS} on a graph, the corresponding HJB equation involves both the Wasserstein and Euclidean geometries.

Applying Theorems \ref{thm_existence} and \ref{uniqueness} to the SSE \eqref{com_NLS}, we obtain the following results for the HJB equation (24) associated with the corresponding optimal control problem.

\begin{corollary}\label{coro-1} 
Let Assumptions \ref{assumption_g}  and  \ref{ass1} hold. In addition assume that  $F$ and $h$ are uniformly bounded.  Then there exists a unique bounded viscosity solution $U$ for the HJB equation \eqref{HJB} with $\mathcal H_0$ given by \eqref{H_0-example}.  
\end{corollary}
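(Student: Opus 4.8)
The plan is to verify that the dominant energy $\mathcal H_0(\rho,x) = K(\rho,x) + \tfrac18\mathcal I(\rho) + \mathcal W(\rho)$ from \eqref{H_0-example}, together with $\mathcal H_1$ as in \eqref{def-H1}, satisfies all the structural hypotheses required by Theorems \ref{thm_existence} and \ref{uniqueness} — namely Assumption \ref{energy}, condition \eqref{bound_prop}, and Assumption \ref{ass_domi} — so that the two abstract theorems apply verbatim. (Assumption \ref{ass1} and Assumption \ref{assumption_g} are hypotheses of the corollary itself, and the uniform boundedness of $F,h$ is assumed, so nothing needs to be checked there.) Since the corollary asserts \emph{existence and uniqueness of a bounded viscosity solution}, the combination of Theorem \ref{thm_existence} and Theorem \ref{uniqueness} is exactly what we need once the structural checks are done.

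First I would verify $\mathcal H_0 \in (\mathrm{Pr})$ in the sense of Definition \ref{def-functions}(i). Smoothness and $\mathcal C^1$-regularity on $\mathcal P^{\circ}(G)\times\mathbb R^n$ follow from Assumption \ref{assumption_g}(g-i) together with the smoothness of $\tilde g_{ij}(\rho)=(\rho_i-\rho_j)/(\log\rho_i-\log\rho_j)$ and of $\mathcal W$. For the lower bound ($\mathrm{Pr}$-i): $K\geq 0$ by (g-ii), $\mathcal I\geq 0$ by definition, and $\mathcal W(\rho)=\tfrac12\sum\mathbb W_{ij}\rho_i\rho_j$ is bounded below on the simplex, so $\mathcal H_0-C_0\geq 0$ for a suitable constant. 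The coercivity condition ($\mathrm{Pr}$-iii) is the crucial point: as $\rho\to\partial\mathcal P(G)$ the Fisher information $\mathcal I(\rho)$ blows up (the term $|\log\rho_i-\log\rho_j|^2\tilde g_{ij}(\rho)$ diverges once $g$ behaves like $t\wedge r$, so some edge weight stays bounded away from zero while the logarithmic factor explodes — this uses (g-ii)); and as $|x_i-x_j|\to\infty$ along an edge, $K(\rho,x)\to\infty$ provided $\rho\in\mathcal P^{\circ}(G)$, since $g_{ij}(\rho)>0$ there. For ($\mathrm{Pr}$-ii), a sublevel set $\{\mathcal H_0\leq R\}$ forces $\mathcal I(\rho)\leq 8R$, hence $\min_i\rho_i\geq c(R)>0$ by the coercivity just shown, which then bounds $g_{ij}(\rho)$ below, hence $K(\rho,x)\leq R$ bounds $\sup_{(i,j)\in E}|x_i-x_j|$; the gradient bounds $\|D_\rho\mathcal H_0\|\vee\|D_x\mathcal H_0\|\leq C_1(R)$ follow from the explicit formulas once $\rho$ is bounded away from $\partial\mathcal P(G)$ and the $x$-differences are bounded. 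The local Lipschitz continuity of $\mathcal H_0$ and its Euclidean partial derivatives (required by Assumption \ref{energy}) is then routine from the explicit expressions on the relevant compact sets.

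Next I would check condition \eqref{bound_prop}, i.e. $\|D_x\mathcal H^{\mathbb V}_0\|+\sum_i|\partial^2_{x_i}\mathcal H^{\mathbb V}_0|\leq C_1\mathcal H^{\mathbb V}_0+C_2$. Note $\mathcal V(\rho)$ is independent of $x$, so $D_x\mathcal H^{\mathbb V}_0=D_x\mathcal H_0$; and $D_{x_i}\mathcal H_0=\sum_{j\in N(i)}\omega_{ij}(x_i-x_j)g_{ij}(\rho)$, $\partial^2_{x_i}\mathcal H_0=\sum_{j\in N(i)}\omega_{ij}g_{ij}(\rho)$. Since $0\leq g_{ij}(\rho)\leq 1$ on the simplex by (g-ii), the second-order terms are bounded by a pure constant, and $\|D_x\mathcal H_0\|\leq C\sum_{(i,j)\in E}|x_i-x_j|g_{ij}(\rho)$, which is controlled by $C(1+K(\rho,x))\leq C(1+\mathcal H^{\mathbb V}_0)+C$ using $2ab\leq a^2+b^2$ (absorbing the boundedness-below constant $C_0$ of $\mathcal H_0$ and the fact that $\mathcal V$ is bounded on the simplex for $\mathbb V\in B_\ell$). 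Finally, for Assumption \ref{ass_domi}: condition \eqref{assu4-eq} holds because $\sum_i D_{x_i}\mathcal H_0=\sum_i\sum_{j\in N(i)}\omega_{ij}(x_i-x_j)g_{ij}(\rho)=0$ by the antisymmetry of $(x_i-x_j)g_{ij}(\rho)$ over edges (using $\omega_{ij}=\omega_{ji}$, $g_{ij}=g_{ji}$). For \eqref{cond-unique1}, on the annulus $\{\mathcal H_0\in(R,2R)\}$ we have (as above) $\min_i\rho_i\geq c$ uniformly, hence $g_{ij}(\rho)$ bounded, $\|D^2_x\mathcal H_0\|\leq C$ a pure constant so $\tfrac1R\|D^2_x\mathcal H_0\|\to 0$; and $\|D_x\mathcal H_0\|^2\leq C\,K(\rho,x)\leq C R$ on that annulus (since $\mathcal I,\mathcal W$ are nonnegative resp. bounded below, $K\leq \mathcal H_0+C_0'\leq 2R+C_0'$), so $\tfrac1{R^{2-\beta}}\|D_x\mathcal H_0\|^2\leq C R^{\beta-1}\to 0$ for any $\beta\in(0,1)$. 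This completes the verification of all hypotheses, and invoking Theorems \ref{thm_existence} and \ref{uniqueness} yields the conclusion.

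The main obstacle I anticipate is the careful quantitative proof that sublevel sets of $\mathcal H_0$ stay bounded away from $\partial\mathcal P(G)$ — i.e. $\mathcal I(\rho)\leq R \Rightarrow \min_i\rho_i\geq c(R)$ — since this requires a lower bound on $\mathcal I$ in terms of $-\log\min_i\rho_i$ that genuinely uses the connectedness of $G$ (a vertex with tiny mass is connected by a path of edges to a vertex with mass $\geq 1/n$, and telescoping the logarithmic differences along that path gives a large contribution to $\mathcal I$), together with the lower bound $\tilde g_{ij}(\rho)\geq \rho_i\wedge\rho_j$ from (g-ii)-type reasoning applied to $\tilde g$. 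Once this coercivity-in-sublevel-sets estimate is in hand, everything else is a mechanical consequence of the explicit formulas for $K$, $\mathcal I$, $\mathcal W$ and the elementary bounds on $g$ from Assumption \ref{assumption_g}.
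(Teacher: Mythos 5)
Your proposal is correct and follows essentially the same route as the paper's proof: verify that $\mathcal H_0=K+\tfrac18\mathcal I+\mathcal W$ satisfies Assumption \ref{energy} (sublevel sets of $\mathcal I$ force $\min_i\rho_i\ge c(R)$, sublevel sets of $K$ then bound $\max_{(i,j)\in E}|x_i-x_j|$), condition \eqref{bound_prop}, and Assumption \ref{ass_domi} (zero sum of $D_x\mathcal H_0$ by symmetry, $\|D_x\mathcal H_0\|^2\lesssim K\lesssim \mathcal H_0+1$, $\|D^2_x\mathcal H_0\|\le C(n)$), and then invoke Theorems \ref{thm_existence} and \ref{uniqueness}. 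The only imprecision is in your anticipated difficulty: the edge selected by telescoping the logarithmic differences along a path need not contribute a large term to $\mathcal I$ (both of its endpoints may carry small mass), whereas the coercivity of $\mathcal I$ near $\partial\mathcal P(G)$ follows more cleanly from an edge joining $\{i:\rho_i^*=0\}$ to its complement, which exists by connectedness and whose term $\widetilde\omega_{ij}\,|\log\rho_i-\log\rho_j|\,|\rho_i-\rho_j|$ genuinely blows up.
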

\begin{proof} According to conditions of Theorems \ref{thm_existence} and \ref{uniqueness}, it suffices to verify that the function $\mathcal H_0$ defined in \eqref{H_0-example} satisfies Assumption \ref{energy}, Assumption  \ref{ass_domi}, and condition \eqref{bound_prop}.  

We first verify Assumption \ref{energy}, i.e., 
 $\mathcal H_0 \in (\mathrm{Pr})$, and both $\mathcal H_0$ and its partial derivatives are locally Lipschitz continuous in the sense 
given by \eqref{locally-Lip}.     
In fact, when $\mathcal H_0(\rho,x)\leq R,$ we derive that the Fisher information satisfies  $\mathcal I(\rho)\leq \mathcal H_0(\rho, x)\leq R$. This implies there exists a constant $c\in(0,1)$ depending on $R$ such that \begin{align*}
\min_{i\in V}\rho_i\ge {c},\quad \text{namely, }\rho\in\mathcal P_{c}(G).
\end{align*}
Notice that the kinetic energy satisfies $K(\rho, x)\leq \mathcal H_0( \rho, x)\leq R$, and by Assumption \ref{assumption_g} (g-\romannumeral2), 
\begin{align*}
\frac12\sum_{(i,j)\in E}\omega_{ij}(x_i-x_j)^2(\rho_i\wedge \rho_j) \leq \frac12\sum_{(i,j)\in E}\omega_{ij}(x_i-x_j)^2g_{ij}(\rho) .
\end{align*}
This yields that there exists $C:=C(R)>0$ such that 
\begin{align*}
\max_{(i,j)\in E}| x_i- x_j|\leq \sqrt{\frac{R}{\min\limits_{(i,j)\in E}\omega_{ij}\min\limits_{i\in V}\rho_i}}\leq C. 
\end{align*}
Then the locally Lipschitz continuity can be also obtained based on the definition of $\mathcal H_0$ in \eqref{H_0-example}. 

Next, we verify Assumption \ref{ass_domi}. By the symmetry of the metric tensor $g$ and the weight matrix $\omega,$ we have that $$\sum_{i=1}^n(D_x\mathcal H_0(\rho,x))_i=\sum_{(i,j)\in E}(x_i-x_j)\omega_{ij}g_{ij}(\rho)=0,$$ which means $D_x\mathcal H_0(\rho,x)\in T_{\rho}\mathcal P(G).$  
Utilizing the Cauchy--Schwartz inequality, 
\begin{align}\label{coro-pro}
\|D_x\mathcal H_0(\rho,x)\|^2=\sum_{i\in V}(\sum_{j\in V(i)}|x_i-x_j|g_{ij}(\rho))^2\leq C(n)K(\rho,x)\leq C(n)(\mathcal H_0(\rho,x)+1), 
\end{align} where we use the boundedness of $\mathbb W_{ij}$, and  we recall that $K$ is the kinetic energy \eqref{kinetic}. By virtue of \eqref{coro-pro} and $\|D^2_x\mathcal H_0(\rho,x)\|\leq C(n)$, we deduce  that \eqref{cond-unique1} holds as well. This verifies Assumption \ref{ass_domi}.

The condition \eqref{bound_prop} is guaranteed by the fact $D_x\mathcal H^{\mathbb V}_0=D_x\mathcal H_0,$ the boundedness of the linear potential control $\mathcal V,$ \eqref{coro-pro}, and $\|D^2_x\mathcal H^{\mathbb V}_0(\rho,x)\|= \|D^2_x\mathcal H_0(\rho,x)\|\leq C(n).$ This
finishes the proof.  
\end{proof}

\subsection{Stochastic logarithmic Schr\"odinger equation on a graph} 
It is worth noting that the logarithmic potential is the unique nonlinearity consistent with the separability hypothesis for noninteracting subsystems in Schr\"odinger theory \cite{SLSE}. However, unlike the case of polynomial nonlinearity, the logarithmic nonlinearity in the stochastic logarithmic Schr\"odinger equation is not locally Lipschitz. To address the challenges posed by the vacuum and the nonlocal Lipschitz property of the logarithmic potential, \cite{SLSE, SLSE3} developed regularization procedures to establish the well-posedness of the stochastic logarithmic Schr\"odinger equation in Euclidean domains. In the context of graphs, the stochastic logarithmic Schr\"odinger equation on a finite graph was recently studied in \cite{cuiSIAM}.  

To formulate the logarithmic nonlinearity counterpart on graphs, define
\begin{align}\label{H0-example2}
\mathcal H^{\mathbb V}_0(\rho ,S) &:= \mathcal H_0(\rho,x)+\mathcal V (\rho )\text{ with }\mathcal H_0(\rho,x):=K(\rho,x ) + \tfrac18 \mathcal I(\rho ) -\mathcal L(\rho),
\end{align}  
and $\mathcal H_1$ is given by \eqref{def-H1}. Plugging $\mathcal H^{\mathbb V}_0,\mathcal H_1$ into \eqref{WHflow1}  and \eqref{WHflow2} yields  
\begin{align*}
&\mathrm d\rho_i+ \sum_{j\in N(i)} \omega_{ij}(S_j-S_i)g_{ij}(\rho)\,\mathrm dt=0,\\
&\mathrm dS_i+\frac12 \sum_{j\in N(i)} \omega_{ij}(S_i - S_j)^2\frac{ \partial g_{ij}(\rho )}{ \partial \rho_i}\mathrm dt+\frac18 \frac{\partial \mathcal I(\rho)}{\partial \rho_i}\mathrm dt+\mathbb V_i\mathrm dt
 -\log(\rho_i)\mathrm dt+ \sigma_i\mathrm dW_i(t)= 0.
\end{align*} 
Via the Madelung transformation $u_j(t) = \sqrt{\rho_j (t)}e^{\mathbf iS_j(t)},\ j\in V$, we obtain the stochastic logarithmic Schr\"odinger equation on the graph $G$ \cite{cuiSIAM}:
\begin{align*}
\mathbf i\mathrm du_j= \Big( -\tfrac12(\Delta_Gu)_j + u_j\mathbb V_j -u_j\log(|u_j|^2)\Big)\,\mathrm dt 
+ \sigma_j u_j \circ \mathrm dW_j(t).
\end{align*} 

Applying Theorems \ref{thm_existence} and \ref{uniqueness} to the above equation, we obtain the following results for the HJB equation (24) associated with the corresponding optimal control problem.

\begin{corollary}\label{coro-2} 
Let Assumptions \ref{assumption_g} and \ref{ass1} hold. In addition assume that  $F$ and $h$ are uniformly bounded. Then there exists a unique bounded viscosity solution $U$ for the HJB equation \eqref{HJB} with $\mathcal H_0$ given by \eqref{H0-example2}. 
\end{corollary}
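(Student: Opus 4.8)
The plan is to verify that the dominant energy $\mathcal H_0$ defined in \eqref{H0-example2}, namely $\mathcal H_0(\rho,x) = K(\rho,x) + \tfrac18\mathcal I(\rho) - \mathcal L(\rho)$, satisfies Assumption \ref{energy}, Assumption \ref{ass_domi}, and condition \eqref{bound_prop}, so that Theorems \ref{thm_existence} and \ref{uniqueness} apply directly. The argument follows the same structure as the proof of Corollary \ref{coro-1}; the only new feature is the presence of the entropy term $-\mathcal L(\rho) = -\sum_i(\log(\rho_i)\rho_i - \rho_i)$ in place of the interaction potential $\mathcal W(\rho)$. First I would check the membership $\mathcal H_0 \in (\mathrm{Pr})$. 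For the lower bound ($\mathrm{Pr}$-\romannumeral1), note that $K \ge 0$, $\mathcal I \ge 0$, and the function $r \mapsto -(r\log r - r) = r - r\log r$ is bounded on $[0,1]$, so $-\mathcal L(\rho) \ge -n$ for $\rho \in \mathcal P(G)$; hence $\mathcal H_0$ is bounded below. For ($\mathrm{Pr}$-\romannumeral2), the key observation is exactly as in Corollary \ref{coro-1}: if $\mathcal H_0(\rho,x) \le R$, then since $K, \mathcal I \ge 0$ and $-\mathcal L$ is bounded below, we obtain $\mathcal I(\rho) \le R + n$, which forces $\min_{i\in V}\rho_i \ge c(R) > 0$ (the Fisher information blows up as $\rho$ approaches $\partial\mathcal P(G)$); then from $K(\rho,x) \le R + n$ together with Assumption \ref{assumption_g} (g-\romannumeral2), one gets $\max_{(i,j)\in E}|x_i - x_j| \le C(R)$, and the bounds on $\|D_\rho\mathcal H_0\|$ and $\|D_x\mathcal H_0\|$ follow by direct differentiation on the resulting compact region (here $D_\rho\mathcal L$ contributes $\log\rho_i$, which is bounded once $\rho_i \ge c$). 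Property ($\mathrm{Pr}$-\romannumeral3) holds because $\mathcal I(\rho) \to +\infty$ as $\rho \to \partial\mathcal P(G)$ and $K \to +\infty$ as $|x_i - x_j| \to \infty$ along an edge, while $-\mathcal L$ stays bounded; so $\mathcal H_0 \to +\infty$ in both regimes.

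Next I would verify that $\mathcal H_0$ and its Euclidean partial derivatives are locally Lipschitz continuous in the sense of \eqref{locally-Lip}. On the region $\{\min_i \rho_i \wedge \min_i \mu_i \ge R_1,\ \max_{(i,j)\in E}|x_i-x_j|\vee|y_i-y_j| \le R_2\}$, the terms $K$ and $\mathcal I$ and their derivatives are smooth functions of $(\rho,x)$ with all arguments bounded away from the singular set, hence Lipschitz there (this is identical to Corollary \ref{coro-1}); the new term $\mathcal L(\rho)$ and its gradient ($\partial_{\rho_i}\mathcal L = \log\rho_i$) and Hessian ($\partial^2_{\rho_i}\mathcal L = 1/\rho_i$) are smooth on $\{\rho_i \ge R_1\}$, hence Lipschitz on that compact set. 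This gives Assumption \ref{energy}.

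For Assumption \ref{ass_domi}: the condition \eqref{assu4-eq} depends only on the $x$-dependence of $\mathcal H_0$, and since $\mathcal L$ is independent of $x$, we have $D_x\mathcal H_0 = D_x K$ exactly as in Corollary \ref{coro-1}, so $\sum_i (D_x\mathcal H_0(\rho,x))_i = \sum_{(i,j)\in E}(x_i-x_j)\omega_{ij}g_{ij}(\rho) = 0$ by symmetry of $\omega$ and $g$. Likewise $\|D_x\mathcal H_0\|^2 \le C(n) K(\rho,x) \le C(n)(\mathcal H_0(\rho,x) + n)$ and $\|D^2_x\mathcal H_0\| = \|D^2_x K\| \le C(n)$, which is where the boundedness of $-\mathcal L$ below is used to absorb it into the $+1$ term; these two bounds give \eqref{cond-unique1} (for any $\beta \in (0,1)$) and also condition \eqref{bound_prop}, again using $D_x\mathcal H^{\mathbb V}_0 = D_x\mathcal H_0$ and boundedness of $D^2_x\mathcal H_0$. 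Since $F$ and $h$ are assumed uniformly bounded (in particular Assumption \ref{ass1} (\romannumeral4) holds with $p_1 = 0$), all hypotheses of Theorems \ref{thm_existence} and \ref{uniqueness} are met, and the conclusion follows. The only mildly delicate point — and the main thing to be careful about — is handling the entropy term $-\mathcal L$: one must consistently use that $-\mathcal L$ is bounded \emph{below} on $\mathcal P(G)$ (so it never hurts the coercivity estimates that force $\rho$ away from the boundary), while its \emph{derivatives} are controlled only on the region $\{\min_i\rho_i \ge c\}$, which is precisely where the sublevel-set condition ($\mathrm{Pr}$-\romannumeral2) places us.
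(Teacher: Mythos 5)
Your proposal is correct and follows exactly the route the paper intends: the paper omits this proof, noting only that the entropy $\mathcal L$ is bounded so that the argument of Corollary \ref{coro-1} carries over, and your write-up is precisely that argument with the entropy term handled via its boundedness on $\mathcal P(G)$ and the boundedness of its derivatives on $\{\min_i\rho_i\ge c\}$. No gaps.
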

Note that the entropy $\mathcal L$ defined in \eqref{entropy} is bounded. Hence the proof of Corollary \ref{coro-2} is similar to that of Corollary \ref{coro-1} and thus is omitted.

At the end of this section, we remark that the kinetic energy \eqref{kinetic} and the Fisher information \eqref{Fisher} play a crucial role in verifying Assumptions \ref{energy} and \ref{ass_domi}. It is also possible to study the HJB equation and the associated optimal control problems for SWHS under alternative choices of energy. For example, beyond the quadratic form of the kinetic energy \eqref{kinetic}, one may consider the general even-power polynomial case
\[
K_1(\rho,x)=\sum_{(i,j)\in E}\omega_{ij}(x_i-x_j)^{2m}g_{ij}(\rho), \quad m\in\mathbb N_+.
\]
Similarly, in place of the Fisher information \eqref{Fisher}, one may employ terms with a repeller property, such as the negative-power function 
\[
\mathcal I_1(\rho)=\sum_{i=1}^n\rho_i^{-1}.
\]

\section{Proof of Theorem \ref{thm_existence}}\label{sec_33} In this section, we apply the dynamic programming principle method to prove that the value function $U$ is the viscosity solution of the HJB equation \eqref{HJB}.  
We begin by establishing several fundamental properties of the value function $U$, including the Bellman principle of optimality, growth bounds, and continuity with respect to all variables. These properties are crucial for verifying that $U$ satisfies the viscosity solution property of the HJB equation. Next, we introduce the stopping time technique, which, combined with the Bellman principle and It\^o formula, allows us to show that $U$ fulfills both the viscosity subsolution and supersolution conditions of the HJB equation.

\subsection{Properties of the value function}
 We first prove the  Bellman principle for the related control problem, which is the key to verify the viscosity properties of the value function.   

\begin{proposition}\label{dynamic-pro} 
Let Assumptions \ref{assumption_g}, \ref{energy} and \ref{ass1} hold. 
Then for 
$0\leq t<\bar t\leq T,$  the value function defined in \eqref{def-valuefunction} fulfills \begin{align*} 
U(t,\rho,x) = \inf_{\mathbb V(\cdot)\in\mathscr V_{\ell}[t,T ]} \mathbb E_{t} \Big[ \int_{t}^{\bar t} F(s,\rho(s),S(s),\mathbb V(s))  ds + U(\bar t,\rho(\bar t),S(\bar t)) \Big]. 
\end{align*}  
\end{proposition}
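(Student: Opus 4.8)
The plan is to follow the standard dynamic programming argument, but with care to accommodate the interior domain $\mathcal P^{\circ}(G)$, the non-attainability of optimal controls, and the fact that only energy moment estimates (Proposition \ref{exact}), not moment estimates on $(\rho(\cdot),S(\cdot))$ themselves, are available. Denote the right-hand side of the claimed identity by $\widetilde U(t,\rho,x)$. I would prove $U\le\widetilde U$ and $U\ge\widetilde U$ separately.

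For the inequality $U(t,\rho,x)\le\widetilde U(t,\rho,x)$: fix any $\mathbb V(\cdot)\in\mathscr V_{\ell}[t,T]$ and let $(\rho(\cdot),S(\cdot))$ be the corresponding state process started at $(t,\rho,x)$, which is well-defined and stays in $\mathcal P^{\circ}(G)\times\mathbb R^n$ by Proposition \ref{exact}. For $\varepsilon>0$ and each realization of $(\rho(\bar t),S(\bar t))$, pick (measurably in $\omega$, using a measurable selection / regular conditional probability argument as in \cite[Lemma 4.4]{YongJM}) a control $\mathbb V^{\varepsilon,\omega}(\cdot)\in\mathscr V_{\ell}[\bar t,T]$ that is $\varepsilon$-optimal for the problem started at $(\bar t,\rho(\bar t,\omega),S(\bar t,\omega))$. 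Concatenating $\mathbb V$ on $[t,\bar t]$ with $\mathbb V^{\varepsilon,\cdot}$ on $[\bar t,T]$ produces an admissible control on $[t,T]$; by the flow (Markov-type) property of the SWHS \eqref{WHflow1}--\eqref{WHflow2} — itself a consequence of pathwise uniqueness in Proposition \ref{exact} — and the tower property of conditional expectation, evaluating $\mathcal J$ at the concatenated control and then taking $\mathbb E_t$ gives
\begin{align*}
U(t,\rho,x)\le \mathbb E_t\Big[\int_t^{\bar t}F(s,\rho(s),S(s),\mathbb V(s))\,\mathrm ds+U(\bar t,\rho(\bar t),S(\bar t))\Big]+\varepsilon.
\end{align*}
Taking the infimum over $\mathbb V(\cdot)$ and letting $\varepsilon\to0$ yields $U\le\widetilde U$. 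The measurable selection step requires knowing that $(\rho(\bar t),S(\bar t))$ is $\mathcal F_{\bar t}$-measurable with values in the Polish space $\mathcal P^{\circ}(G)\times\mathbb R^n$, and that $U$ is measurable — the latter follows from the continuity of $U$ established in Proposition \ref{conti}.

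For the reverse inequality $U(t,\rho,x)\ge\widetilde U(t,\rho,x)$: given any $\mathbb V(\cdot)\in\mathscr V_{\ell}[t,T]$, restrict it to $[t,\bar t]$ and to $[\bar t,T]$; the restriction to $[\bar t,T]$ is admissible for the later problem, so by definition of $U$ (and the flow property again),
\begin{align*}
\mathbb E_{\bar t}\Big[\int_{\bar t}^T F(s,\rho(s),S(s),\mathbb V(s))\,\mathrm ds+h(\rho(T),S(T))\Big]\ge U(\bar t,\rho(\bar t),S(\bar t)).
\end{align*}
Adding $\mathbb E_{\bar t}\big[\int_t^{\bar t}F\,\mathrm ds\big]$ (which is $\mathcal F_{\bar t}$-measurable) to both sides, applying $\mathbb E_t$, and using the tower property gives $\mathcal J(t,\rho,x;\mathbb V)\ge\mathbb E_t[\int_t^{\bar t}F\,\mathrm ds+U(\bar t,\rho(\bar t),S(\bar t))]\ge\widetilde U(t,\rho,x)$; taking the infimum over $\mathbb V$ finishes this direction. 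Here one uses the nonnegativity of $F$ and $h$ (Assumption \ref{ass1}(i)) together with the energy growth bound \eqref{ass11} and the moment estimate \eqref{H_0estimate} to ensure all the expectations appearing are finite and the conditional expectations are well-defined.

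The main obstacle is the measurable-selection / regular conditional probability step in the first inequality: because the state space $\mathcal P^{\circ}(G)$ is only the interior of the simplex (not closed, not compact) and optimal controls need not exist, one must carefully justify that an $\varepsilon$-optimal control can be chosen $\mathcal F_{\bar t}$-measurably as a function of $(\rho(\bar t),S(\bar t))$ and concatenated without destroying adaptedness — and that the resulting concatenated state process coincides, on $[\bar t,T]$, with the one started afresh at $(\bar t,\rho(\bar t),S(\bar t))$, which rests on pathwise uniqueness from Proposition \ref{exact}. A secondary technical point is the uniform integrability needed to pass limits, handled via \eqref{H_0estimate}, \eqref{ass11} and the uniform bound $\|\mathbb V(s)\|\le\ell$. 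These are by now standard in stochastic control (cf. \cite{YongJM,YongJM2,book-controlled}), so I would carry out the selection argument following \cite[Lemma 4.4]{YongJM} and refer to those sources for the routine verifications.
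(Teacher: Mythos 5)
Your proposal is correct and follows essentially the same route as the paper: both directions are handled by splitting the cost via the tower property, restricting/concatenating controls, and deferring the measurable-selection issue to \cite[Lemma 4.4]{YongJM}. The only cosmetic difference is that for the inequality $U\le\widetilde U$ the paper closes the argument with an a.s.\ minimizing sequence $\{\mathbb V_k\}\subset\mathscr V_{\ell}[\bar t,T]$ combined with the reverse Fatou lemma (using the uniform bounds from Assumption \ref{ass1} and $\|\mathbb V\|\le\ell$), rather than your explicit $\varepsilon$-optimal measurable selection and concatenation, but these are interchangeable formulations of the same step.
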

Since its proof is standard, we postpone it to Appendix \ref{app1} to make the article self-contained. We refer to e.g. \cite{Nisio} and \cite[Section V.2]{book-controlled} for further details.  
With this dynamic programming principle, we first show the continuity and growth properties of the value function $U$, and then prove that $U$ is a viscosity solution of the HJB equation \eqref{HJB}.

\begin{proposition}\label{conti}
Under Assumptions \ref{assumption_g}, \ref{energy} and    \ref{ass1}, the value function $U$ is continuous on $[0,T]\times\mathcal P^{\circ}(G)\times\mathbb R^n$ and satisfies that for any $t\in[0,T],$
\begin{align}\label{growth-U}
 |U(t,\rho,x)| \leq C(1+ |\mathcal H_0(\rho,x)|^{p_1}),
\end{align} for some $C := C(T,p_1,\ell)>0$, where the parameter $p_1$ is given in Assumption \ref{ass1}. 
\end{proposition}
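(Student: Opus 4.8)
The plan is to establish the growth bound first and then derive continuity from it, using the moment estimates for the energy functional in Proposition~\ref{exact} as the workhorse throughout. For the growth estimate \eqref{growth-U}, I would pick an arbitrary admissible control $\mathbb V\in\mathscr V_{\ell}[t,T]$, run the state equation \eqref{WHflow1}--\eqref{WHflow2} from $(\rho,x)$, and bound the cost functional $\mathcal J(t,\rho,x;\mathbb V)$ directly. Using Assumption~\ref{ass1}(iv), one has $F(s,\rho(s),S(s),\mathbb V(s))\vee h(\rho(T),S(T))\le C(|\mathcal H_0(\rho(s),S(s))|^{p_1}+1)$, and since $\mathcal H_0^{\mathbb V}=\mathcal H_0+\mathcal V$ with $\mathcal V(\rho)=\sum_i\mathbb V_i\rho_i$ bounded by $\ell$ on the simplex, we may pass freely between $\mathcal H_0$ and $\mathcal H_0^{\mathbb V}$ at the cost of an additive constant. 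Then \eqref{H_0estimate} in Proposition~\ref{exact} gives $\mathbb E_t[\sup_{s\in[t,T]}|\mathcal H_0^{\mathbb V}(\rho(s),S(s))|^{p_1}]\le C(1+|\mathcal H_0^{\mathbb V}(\rho,x)|^{p_1})$ (applying it with $p=\max\{p_1,2\}$ and Jensen if $p_1<2$). Integrating over $[t,T]$ and taking expectations yields $\mathcal J(t,\rho,x;\mathbb V)\le C(1+|\mathcal H_0(\rho,x)|^{p_1})$ uniformly in $\mathbb V$; taking the infimum over $\mathbb V$ gives the upper bound on $U$, and nonnegativity of $F,h$ (Assumption~\ref{ass1}(i)) gives $U\ge 0$, so \eqref{growth-U} follows.

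For continuity, I would use the Bellman principle (Proposition~\ref{dynamic-pro}) to handle time continuity and a direct comparison of cost functionals for continuity in $(\rho,x)$. For continuity in $t$: fix $(\rho,x)$ and $t<\bar t$. Applying Proposition~\ref{dynamic-pro} on $[t,\bar t]$, $U(t,\rho,x)=\inf_{\mathbb V}\mathbb E_t[\int_t^{\bar t}F\,\mathrm ds+U(\bar t,\rho(\bar t),S(\bar t))]$. The running-cost piece is controlled by $(\bar t-t)$ times the growth bound just established; for the remaining piece one writes $U(\bar t,\rho(\bar t),S(\bar t))-U(\bar t,\rho,x)$ and must control it. Here I would use a preliminary local Lipschitz-type estimate on $U$ in the spatial variables together with the continuity-in-time moment bound \eqref{moment-cont-1}, which gives $\mathbb E_t[\sup_{r\in[t,\bar t]}|\mathcal H_0^{\mathbb V}(\rho(r),S(r))-\mathcal H_0^{\mathbb V}(\rho,x)|^p]\le C(\bar t-t)^{p/2}$; this controls how far the state drifts in the energy level set and hence (since on energy sublevel sets the coordinates $\rho_i$ are bounded below and $|x_i-x_j|$ bounded, by property $(\mathrm{Pr}$-ii)) in Euclidean distance. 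For continuity in $(\rho,x)$: given two initial points $(\rho,x),(\mu,y)$ in a common energy sublevel set $\{\mathcal H_0\le R\}$, I would couple the two state trajectories by driving them with the same Brownian motion, estimate $\mathbb E_t[\sup_{s}(\|\rho(s)-\tilde\rho(s)\|+\|S(s)-\tilde S(s)\|)]$ via Gronwall using the local Lipschitz continuity of $D_x\mathcal H_0,D_\rho\mathcal H_0$ from Assumption~\ref{energy} (valid because both trajectories, by \eqref{H_0estimate} and a stopping-time argument, stay in a slightly larger sublevel set with high probability), and then use local Lipschitz continuity of $F$ and $h$ (Assumption~\ref{ass1}(ii)) to bound $|\mathcal J(t,\rho,x;\mathbb V)-\mathcal J(t,\mu,y;\mathbb V)|$ uniformly in $\mathbb V$, whence $|U(t,\rho,x)-U(t,\mu,y)|$ is controlled.

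The main obstacle I anticipate is the continuity-in-space estimate, because the state equation does \emph{not} have moment estimates for $\rho(\cdot),S(\cdot)$ themselves — only for the energy $\mathcal H_0^{\mathbb V}$ — and the coefficients $D_x\mathcal H_0^{\mathbb V},D_\rho\mathcal H_0^{\mathbb V}$ are only \emph{locally} Lipschitz, with the Lipschitz constant blowing up as $\rho\to\partial\mathcal P(G)$ or $|x_i-x_j|\to\infty$ (the logarithmic-potential singularity). To close the Gronwall argument one must confine both coupled trajectories to a fixed compact subset of $\mathcal P^\circ(G)\times\mathbb R^n$; this is exactly what the stopping-time/energy-truncation technique advertised in the introduction is for. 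Concretely I would introduce the stopping time $\tau_R:=\inf\{s\ge t:\mathcal H_0^{\mathbb V}(\rho(s),S(s))\vee\mathcal H_0^{\mathbb V}(\tilde\rho(s),\tilde S(s))>R'\}$ for $R'$ slightly larger than $R$, run the comparison on $[t,\tau_R]$ where the coefficients are genuinely Lipschitz by $(\mathrm{Pr}$-ii), and control the event $\{\tau_R<T\}$ using Markov's inequality together with \eqref{H_0estimate} and \eqref{moment-cont-1}; on this small-probability event the contributions to $\mathcal J$ are handled by the growth bound \eqref{growth-U} and Hölder's inequality. Patching the two regimes and letting the perturbation $\|\rho-\mu\|+\|x-y\|\to 0$ (with $R'$ fixed, then $R'\to\infty$) yields the desired continuity. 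I would also note that $U(T,\rho,x)=h(\rho,x)$ together with the time-continuity estimate gives continuity up to the terminal time.
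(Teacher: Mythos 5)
Your proposal is correct and follows essentially the same route as the paper: the growth bound via the energy moment estimate of Proposition \ref{exact}, spatial continuity via a uniform-in-$\mathbb V$ comparison of cost functionals split into an energy-truncated regime (coupled trajectories, Gronwall with the locally Lipschitz coefficients valid on sublevel sets) and a small-probability tail handled by H\"older/Chebyshev and the growth assumption, and time continuity via the Bellman principle. The one loose point is your claim that \eqref{moment-cont-1} together with membership in an energy sublevel set controls the Euclidean displacement of the state: property $(\mathrm{Pr}$-\romannumeral2) bounds $\min_i\rho_i$ and $|x_i-x_j|$ on sublevel sets but not $\|S(r)-x\|$ itself, so one must instead bound the displacement directly from the state equation on the stopped interval (or, as the paper does at this step, invoke a.s.\ path continuity together with the growth bound and dominated convergence).
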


\begin{proof}
The proof is divided into three parts: we first establish the growth property \eqref{growth-U}, then demonstrate the continuity of $U$ with respect to the variables $(\rho, x)$, and finally show the continuity in the variable $t$.

\textbf{Proof of \eqref{growth-U}.}
By the definition of $\mathcal J$ in \eqref{def-costfunction} and assumption \eqref{ass11}, 
\begin{align}\label{growth-proof}
\mathcal J(t,\rho,x;\mathbb V(\cdot)) &\leq C\mathbb E_t \Big[1+ \int_t^T  |\mathcal H_0(\rho(s),S(s))|^{p_1}  \mathrm ds + |\mathcal H_0(\rho(T),S(T))|^{p_1} \Big] \notag\\
&\leq C(\ell,T,p_1)(1 + |\mathcal H_0(\rho,x)|^{p_1}),
\end{align}
where the last inequality follows from Proposition \ref{exact}, the initial value $\rho(t)=\rho,S(t)=x,$  \eqref{def-H0} and 
$\mathbb V(s)\in B_{\ell}$ for all $s\in[t,T]$.  Taking the infimum over $\mathbb V(\cdot)\in\mathscr V_{\ell}[t,T]$ yields \eqref{growth-U}.

\textbf{ Continuity of $U$ w.r.t. $(\rho,x)$ variables.}  
Let $(\rho,x),(\tilde\rho,\tilde x)\in\mathcal P^{\circ}(G)\times\mathbb R^n$ be two distinct initial values of the SWHS \eqref{WHflow1}--\eqref{WHflow2} at the starting time $t$.  
Let $\{\mathbb V_k\},\{\widetilde{\mathbb V}_k\} \subset \mathscr V_{\ell}[t,T]$ be minimizing sequences satisfying 
\[
\mathcal J(t,\rho,x;\mathbb V_k) \to U(t,\rho,x), \quad \mathcal J(t,\tilde\rho,\tilde x;\widetilde{\mathbb V}_k) \to U(t,\tilde \rho,\tilde x) \quad \text{a.s. as } k \to \infty.
\] 
Specifically, one has that for some bounded sequences $\delta_k,\tilde{\delta}_k \to 0$ a.s., 
\[
U(t,\rho,x) \geq \mathcal J(t,\rho,x;\mathbb V_k) - \delta_k, \quad U(t,\tilde \rho,\tilde x) \geq \mathcal J(t,\tilde \rho,\tilde x;\widetilde{\mathbb V}_k) - \tilde\delta_k,\;a.s.
\]
This implies 
\begin{align}\label{conti_ineq}
\mathcal J(t,\rho,x;\mathbb V_k) - \mathcal J(t,\tilde\rho,\tilde x;{\mathbb V}_k) - \delta_k &\leq U(t,\rho,x) - U(t,\tilde\rho,\tilde x) \notag\\
&\leq \mathcal J(t,\rho,x;\widetilde{\mathbb V}_k) - \mathcal J(t,\tilde\rho,\tilde x;\widetilde{\mathbb V}_k) + \tilde\delta_k,\;a.s.
\end{align}  
where we recall $U(t,\rho,x)=\inf\limits_{\mathbb V\in\mathscr V_{\ell}[t,T]}\mathcal J(t,\rho,x;\mathbb V).$ 
Once we prove that 
\begin{align}\label{step12-prove}
\sup_{\mathbb V\in\mathscr V_{\ell}[t,T]} \big| \mathcal J(t,\rho,x;\mathbb V) - \mathcal J(t,\tilde\rho,\tilde x;\mathbb V) \big| \to 0 \quad \text{as } (\rho,x) \to (\tilde\rho,\tilde x),
\end{align}
then by \eqref{conti_ineq} we get \begin{align}\label{absolut}
|U(t,\rho,x)-U(t,\tilde\rho,\tilde x)|\leq \sup_{\mathbb V\in\mathscr V_{\ell}[t,T]} \big| \mathcal J(t,\rho,x;\mathbb V) - \mathcal J(t,\tilde\rho,\tilde x;\mathbb V) \big|+(\delta_k\vee\tilde\delta_k)\to0
\end{align} 
as $k\to\infty$ and $(\rho,x)\to(\tilde\rho,\tilde x).$

Now we prove \eqref{step12-prove}. Since $t\ge 0$ is fixed, we may assume $t=0$ for simplicity. According to definitions of $\mathcal V$ and $\mathcal H_0$ in  \eqref{def-H0},   and Proposition \ref{exact} (with $t_0=0$), the solution of the SWHS satisfies that for any $p\ge 2,$
\begin{align*}
&\quad \mathbb E\Big[\sup_{t\in[0,T]}|\mathcal H_0(\rho^{\mathbb V}(t),S^{\mathbb V}(t))|^p\Big]\\&\leq C_p\mathbb E\Big[\sup_{t\in[0,T]}|\mathcal H^{\mathbb V}_0( \rho^{\mathbb V}(t),S^{\mathbb V}(t))|^p\Big]+{C_p\mathbb E\Big[\sup_{t\in[0,T]}\|\mathbb V(t)\|^p\Big]}\\
&\leq C(\ell,p,T)(1+|\mathcal H^{\mathbb V}_0(\rho,x)|^p)\leq C(\ell,p,T)(1+|\mathcal H_0(\rho,x)|^p),
\end{align*}   which implies that for any $p\ge 2,$
\begin{align}
&\sup_{\mathbb V\in \mathscr V_{\ell}[0,T]}\mathbb E \Big[ \sup_{t\in[0,T]} |\mathcal H_0(\rho^{\mathbb V}(t),S^{\mathbb V}(t))|^p \Big] \leq C. \label{mass2}
\end{align}

For $\mathbb V\in\mathscr V_{\ell}[0,T]$ and positive constant $\alpha> \mathcal H_0(\rho,x)\vee\mathcal H_0(\tilde\rho,\tilde x)$, define the stopping time 
\[
\tau_{\alpha} := \inf \Big\{ t \in [0,T] : \sup_{s\in[0,t]} \mathcal H_0(\rho^{\mathbb V}(s),S^{\mathbb V}(s)) \geq \alpha \Big\},
\] 
and the event 
\[
\Omega_{\alpha} := \Big\{ \omega \in \Omega : \sup_{s\in[0,T]} \mathcal H_0(\rho^{\mathbb V}(s,\omega),S^{\mathbb V}(s,\omega)) \leq \alpha \Big\},
\]
for solution $(\rho^{\mathbb V}(t),S^{\mathbb V}(t))_{t\in[0,T]}$. 
Similarly define $\tilde\tau_\alpha$ and $\widetilde\Omega_\alpha$ for solution $(\tilde\rho^{\mathbb V}(t),\widetilde S^{\mathbb V}(t) )_{t\in[0,T]}$ with the initial value $(\tilde \rho,\tilde x)$. Now we split the proof of \eqref{step12-prove} into the following two steps.

 \textit{Step 1: Prove that \begin{align}\label{step1-prove}\sup\limits_{\mathbb V\in\mathscr V_{\ell}[0,T]}\int_0^T\mathbb E[\mathbf 1_{\Omega^c_{\alpha}\cup \widetilde \Omega^c_{\alpha}}(F(s,\rho^{\mathbb V}(s),S^{\mathbb V}(s),\mathbb V)-F(s,\tilde \rho^{\mathbb V}(s),\widetilde S^{\mathbb V}(s),\mathbb V))]\mathrm ds\to0 \text{ as } \alpha\to\infty.
\end{align}}

By the H\"older inequality, growth assumption  \eqref{ass11}  on $F$ and  \eqref{mass2},  we obtain 
\begin{align}\label{first1}
&\quad \sup\limits_{\mathbb V\in\mathscr V_{\ell}[0,T]}\int_0^T\mathbb E[\mathbf 1_{\Omega^c_{\alpha}\cup \widetilde \Omega^c_{\alpha}}(F(s,\rho^{\mathbb V}(s),S^{\mathbb V}(s),\mathbb V)-F(s,\tilde \rho^{\mathbb V}(s),\widetilde S^{\mathbb V}(s),\mathbb V))]\mathrm ds\notag\\
&\leq \sup\limits_{\mathbb V\in\mathscr V_{\ell}[0,T]}\int_0^T\Big\|F(s,\rho^{\mathbb V}(s),S^{\mathbb V}(s),\mathbb V)-F(s,\tilde \rho^{\mathbb V}(s),\widetilde S^{\mathbb V}(s),\mathbb V)\Big\|_{L^2(\Omega)}(\mathbb P(\Omega^c_{\alpha}\cup \widetilde \Omega^c_{\alpha}))^{\frac12} \mathrm ds\notag\\
&\leq \sup\limits_{\mathbb V\in\mathscr V_{\ell}[0,T]}C(\mathbb P(\Omega^c_{\alpha}\cup \widetilde \Omega^c_{\alpha}))^{\frac12}\leq \sup\limits_{\mathbb V\in\mathscr V_{\ell}[0,T]}C(\mathbb P(\Omega^c_{\alpha})+\mathbb P(\widetilde \Omega^c_{\alpha}))^{\frac12}\notag\\
&\leq \sup\limits_{\mathbb V\in\mathscr V_{\ell}[0,T]} C\Big(\mathbb P(\sup_{t\in[0,T]}\mathcal H_0(\rho^{\mathbb V}(t),S^{\mathbb V}(t))\ge \alpha)+ \mathbb P(\sup_{t\in[0,T]}\mathcal H_0(\tilde\rho^{\mathbb V}(t),\widetilde S^{\mathbb V}(t))\ge \alpha)\Big)^{\frac12},
\end{align}
where constant $C:=C(\ell,p_1,T,\mathcal H_0(\rho,x),\mathcal H_0(\tilde\rho,\tilde x))>0.$
Using the Chebyshev inequality and \eqref{mass2} leads to that the right-hand side of \eqref{first1} converges to zero as $\alpha\to\infty.$

\textit{Step 2: Prove that  
\begin{align}\label{step2-prove}\sup\limits_{\mathbb V\in\mathscr V_{\ell}[0,T]}\int_0^T\mathbb E[\mathbf 1_{\Omega_{\alpha}\cap \widetilde \Omega_{\alpha}}(F(s,\rho^{\mathbb V}(s),S^{\mathbb V}(s),\mathbb V)-F(s,\tilde \rho^{\mathbb V}(s),\widetilde S^{\mathbb V}(s),\mathbb V))]\mathrm ds\to 0\text{ as }(\rho,x)\to(\tilde\rho,\tilde x).
\end{align}}

By the locally Lipschitz condition of $F$ (Assumption \ref{ass1}), the H\"older inequality and \eqref{mass2}, we have  
\begin{align*}
&\sup\limits_{\mathbb V\in\mathscr V_{\ell}[0,T]}\int_0^T\mathbb E[\mathbf 1_{\Omega_{\alpha}\cap \widetilde \Omega_{\alpha}}(F(s,\rho^{\mathbb V}(s),S^{\mathbb V}(s),\mathbb V)-F(s,\tilde \rho^{\mathbb V}(s),\widetilde S^{\mathbb V}(s),\mathbb V))]\mathrm ds\\
&\leq C(\alpha)\sup\limits_{\mathbb V\in\mathscr V_{\ell}[0,T]}\int_0^T\mathbb E\Big[\mathbf 1_{\Omega_{\alpha}\cap \widetilde \Omega_{\alpha}}(\|S^{\mathbb V}(s)-\widetilde S^{\mathbb V}(s)\|+\|\rho^{\mathbb V}(s)-\tilde \rho^{\mathbb V}(s)\|)\Big] \mathrm ds\\
&\leq C(\alpha,T)\sup\limits_{\mathbb V\in\mathscr V_{\ell}[0,T]}\Big\{\mathbb E\Big[\int_0^T\mathbf 1_{\Omega_{\alpha}\cap \widetilde \Omega_{\alpha}}(\|S^{\mathbb V}(s)-\widetilde S^{\mathbb V}(s)\|^2+\|\rho^{\mathbb V}(s)-\tilde \rho^{\mathbb V}(s)\|^2)\mathrm ds\Big]\Big\}^{\frac12}\\
&\leq C(\alpha,T)\sup\limits_{\mathbb V\in\mathscr V_{\ell}[0,T]}\Big\{\mathbb E\Big[\int_0^{\tau_{\alpha}\wedge\tilde{\tau}_{\alpha}}(\|S^{\mathbb V}(s)-\widetilde S^{\mathbb V}(s)\|^2+\|\rho^{\mathbb V}(s)-\tilde \rho^{\mathbb V}(s)\|^2)\mathrm ds\Big]\Big\}^{\frac12}, 
\end{align*} 
where in the last step 
we use the fact that $\tau_{\alpha}=T$ on $\Omega_{\alpha},$ and similarly, $\tilde\tau_{\alpha}=T$ on $\widetilde\Omega_{\alpha}.$   
According to the differential formulation \eqref{WHflow1}--\eqref{WHflow2}, utilizing the It\^o formula before time $\tau_{\alpha}\wedge \tilde\tau_{\alpha}$ and the Young inequality, we obtain  
\begin{align*}
&\quad \|{\rho^{\mathbb V}(t\wedge \tau_{\alpha}\wedge \tilde \tau_{\alpha})}-{\tilde\rho^{\mathbb V}(t\wedge \tau_{\alpha}\wedge \tilde \tau_{\alpha})}\|^2\\
&=\|{\rho(0)}-{\tilde\rho(0)}\|^2\\
&\quad +
\int_0^{t\wedge \tau_{\alpha}\wedge \tilde \tau_{\alpha}}
2(\rho^{\mathbb V}(s)-\tilde\rho^{\mathbb V}(s))^{\top}\Big(D_{ S}\mathcal H^{\mathbb V}_0(\rho^{\mathbb V}(s),S^{\mathbb V}(s))-D_{ S}\mathcal H^{\mathbb V}_0(\tilde\rho^{\mathbb V}(s),\widetilde S^{\mathbb V}(s))\Big)\mathrm ds\\
&\leq \|{\rho(0)}-{\tilde\rho(0)}\|^2+C(\alpha)\int_0^{t\wedge \tau_{\alpha}\wedge \tilde \tau_{\alpha}}(\|S^{\mathbb V}(s)-\widetilde S^{\mathbb V}(s)\|^2
+\| {\rho^{\mathbb V}(s)}-{\tilde\rho^{\mathbb V}(s)}\|^2)\mathrm ds,
\end{align*}  
and 
\begin{align*}
&\quad \|S^{\mathbb V}(t\wedge \tau_{\alpha}\wedge \tilde \tau_{\alpha})-\widetilde S^{\mathbb V}(t\wedge \tau_{\alpha}\wedge \tilde \tau_{\alpha})\|^2\\&=\|S(0)-\widetilde S(0)\|^2\\
&\quad +2\int_0^{t\wedge \tau_{\alpha}\wedge \tilde \tau_{\alpha}}(S^{\mathbb V}(s)-\widetilde S^{\mathbb V}(s))^{\top}\Big(-D_{\rho}\mathcal H^{\mathbb V}_0(\rho^{\mathbb V}(s),S^{\mathbb V}(s))+D_{\rho}\mathcal H^{\mathbb V}_0(\tilde\rho^{\mathbb V}(s),\widetilde S^{\mathbb V}(s))\Big)\mathrm ds\\
 &\leq \|x-\tilde x\|^2+C(\alpha)\int_0^{t\wedge \tau_{\alpha}\wedge \tilde \tau_{\alpha}}(\|S^{\mathbb V}(s)-\widetilde S^{\mathbb V}(s)\|^2+\|{\rho^{\mathbb V}(s)}-{\tilde\rho^{\mathbb V}(s)}\|^2)\mathrm ds,
\end{align*}
where we use the locally Lipschitz condition on $\mathcal H_0^{\mathbb V}$ and its derivatives, and we also use the property $\mathcal H_0\in(\mathrm{Pr})$ (see \eqref{Pr-set} for the definition) to bound $$\mathbf 1_{s\in[0,\tau_{\alpha}\wedge \tilde\tau_{\alpha}]}\Big(\max_{(i,j)\in E}|S_i(s)-S_j(s)|+\max_{(i,j)\in E}|\widetilde S_i(s)-\widetilde S_j(s)|+\frac{1}{\min_{i\in V}\rho_i(s)}+\frac{1}{\min_{i\in V}\tilde \rho_i(s)}\Big)\leq C(\alpha).$$
Gathering together the above two inequalities and applying the Gronwall inequality give that for each fixed $\alpha,$
\begin{align*}
&\quad \sup_{s\in[0,T]}\mathbb E\Big[ \|{\rho^{\mathbb V}(s\wedge \tau_{\alpha}\wedge \tilde \tau_{\alpha})}-{\tilde\rho^{\mathbb V}(s\wedge \tau_{\alpha}\wedge \tilde \tau_{\alpha})}\|^2+\|S^{\mathbb V}(s\wedge \tau_{\alpha}\wedge \tilde \tau_{\alpha})-\widetilde S^{\mathbb V}(s\wedge \tau_{\alpha}\wedge \tilde \tau_{\alpha})\|^2\Big]\\&\leq (\|{\rho}-{\tilde\rho}\|^2+|x-\tilde x\|^2)e^{C(\alpha,T)}.
\end{align*}
Hence, 
\begin{align*}
&\sup\limits_{\mathbb V\in\mathscr V_{\ell}[0,T]}\Big\{\mathbb E\Big[\int_0^{\tau_{\alpha}\wedge\tilde{\tau}_{\alpha}}(\|S^{\mathbb V}(s)-\widetilde S^{\mathbb V}(s)\|^2+\|\rho^{\mathbb V}(s)-\tilde \rho^{\mathbb V}(s)\|^2)\mathrm ds\Big]\Big\}^{\frac12}\\
&\leq \sup\limits_{\mathbb V\in\mathscr V_{\ell}[0,T]}\Big\{\mathbb E\Big[\int_0^{T}(\|S^{\mathbb V}(s\wedge \tau_{\alpha}\wedge \tilde\tau_{\alpha})-\widetilde S^{\mathbb V}(s\wedge \tau_{\alpha}\wedge \tilde\tau_{\alpha})\|^2\\
&\quad +\|\rho^{\mathbb V}(s\wedge \tau_{\alpha}\wedge \tilde\tau_{\alpha})-\tilde \rho^{\mathbb V}(s\wedge \tau_{\alpha}\wedge \tilde\tau_{\alpha})\|^2)\mathrm ds\Big]\Big\}^{\frac12}
\\
&\leq T^{\frac12}(\|{\rho}-{\tilde\rho}\|^2+\|x-\tilde x\|^2)^{\frac12}e^{\frac12C(\alpha,T)},
\end{align*}which converges to zero 
as $(\rho,x)\to(\tilde\rho,\tilde x).$ This finishes the proof of \textit{Step 2}. 

Combining \eqref{step1-prove} and \eqref{step2-prove}, we derive 
\begin{align*}
\sup\limits_{\mathbb V\in\mathscr V_{\ell}[0,T]}\int_0^T\mathbb E[(F(s,\rho^{\mathbb V}(s),S^{\mathbb V}(s),\mathbb V)-F(s,\tilde \rho^{\mathbb V}(s),\widetilde S^{\mathbb V}(s),\mathbb V))]\mathrm ds\to 0
\end{align*} 
$\text{ as }(\rho,x)\to(\tilde\rho,\tilde x),\;\alpha\to\infty.$
Similarly, one can prove that $$\sup\limits_{\mathbb V\in\mathscr V_{\ell}[0,T]}\mathbb E[h(\rho^{\mathbb V}(T),S^{\mathbb V}(T))-h(\tilde\rho^{\mathbb V}(T),\widetilde S^{\mathbb V}(T))]\to0\text{ as }(\rho,x)\to(\tilde\rho,\tilde x).$$ Thus we obtain \eqref{step12-prove}. Then by \eqref{absolut} 
we finish the proof of continuity of $U$ on $(\rho,x)$ variables.  

\textbf{Continuity of $U$ w.r.t. $t$ variable.} Suppose that $t<\bar t.$ By Proposition \ref{dynamic-pro}, for any $\mathbb V\in\mathscr V_{\ell}[t,T],$   
\begin{align*}
&U(t,\rho,x)-\mathbb E_t[U(\bar t,\rho,x)]\leq \mathbb E_{t}\Big[U(\bar t,\rho(\bar t),S(\bar t))-U(\bar t,\rho,x)+
\int_{t}^{\bar t}F(s,\rho(s),S(s);\mathbb V)\mathrm ds\Big]\\
&\leq \mathbb E_t[|U(\bar t,\rho(\bar t),S(\bar t))-U(\bar t,\rho,x)|]+C(\bar t-t)\Big(1+\mathbb E\Big[\sup_{s\in[t,T]}|\mathcal H_0(\rho(s),S(s))|^{p_1}\Big]\Big),
\end{align*}
 where we use Assumption \ref{ass1}. By Proposition \ref{exact}, we have that the second term on the right-hand side of the above inequality converges to zero as $\bar t\to t.$ By virtue of \eqref{growth-U} and Proposition \ref{exact}, we have \begin{align*}
 &\quad \mathbb E_t[U(\bar t,\rho(\bar t),S(\bar t))]\leq \mathbb E_t\Big[\sup_{s\in[t,T]}U(s,\rho(s),S(s))\Big]\\&\leq C\mathbb E_t\Big[\sup_{s\in[t,T]}(1+|\mathcal H_0(\rho(s),S(s))|^{p_1})\Big]\leq C(1+|\mathcal H_0(\rho(t),S(t))|^{p_1}).
 \end{align*} 
  Hence by the dominant convergence theorem and the continuity of $U$ on $(\rho,x)$ variables yields that \begin{align}\label{DCT}\lim_{\bar t\to t}\mathbb E_t\Big[\Big|U(\bar t,\rho(\bar t),S(\bar t))-U(\bar t,\rho,x)\Big|\Big]= \mathbb E_t\Big[\lim_{\bar t\to t}\Big|U(\bar t,\rho(\bar t),S(\bar t))-U(\bar t,\rho,x)\Big|\Big]=0.
  \end{align} 
 
 On the other hand, according to Proposition \ref{dynamic-pro}, for any $\epsilon>0$, there exists $\mathbb V^{\epsilon}\in\mathscr V_{\ell}[t,T]$ such that 
 \begin{align*}
 &\quad U(t,\rho,x)-\mathbb E_t[U(\bar t,\rho,x)]+\epsilon\\
 &\ge \mathbb E_t[U(\bar t,\rho^{\mathbb V^\epsilon}(\bar t),S^{\mathbb V^\epsilon}(\bar t))-U(\bar t,\rho,x)]+\mathbb E_t\Big[\int_t^{\bar t}F(s,\rho^{\mathbb V^\epsilon}(s),S^{\mathbb V^\epsilon}(s);\mathbb V^{\epsilon})\mathrm ds\Big].
 \end{align*} 
Similar to the argument of \eqref{DCT}, by virtue of the dominant convergence theorem, $\rho^{\mathbb V^\epsilon}(t)=\rho,S^{\mathbb V^\epsilon}(t)=x$, \eqref{growth-U} and Proposition \ref{exact}, we get $$\mathbb E_t[U(\bar t,\rho^{\mathbb V^\epsilon}(\bar t),S^{\mathbb V^\epsilon}(\bar t))-U(\bar t,\rho,x)]\to0 \text{ as }\bar t\to t,\text{ uniformly with } \epsilon.$$  Utilizing Assumption \ref{ass1} and Proposition \ref{exact} yields 
$$\mathbb E_t\Big[\int_t^{\bar t}F(s,\rho^{\mathbb V^\epsilon}(s),S^{\mathbb V^\epsilon}(s);\mathbb V^{\epsilon})\mathrm ds\Big]\to0  
\text{ as }\bar t\to0, \text{ uniformly with } \epsilon.$$ Then we finish the proof by the arbitrariness of  $\epsilon$. 
\end{proof}

\subsection{Proof of Theorem \ref{thm_existence}}

With these preliminaries above, we are in a position to prove  Theorem \ref{thm_existence}. 

\begin{proof}[Proof of Theorem \ref{thm_existence}]
The continuity of $U$ is guaranteed by  Proposition \ref{conti}.  According to the definition of viscosity solution (see Definition \ref{def1-1}), it suffices to verify the viscosity solution inequalities \eqref{viscosity-sub} and \eqref{viscosity-super}, which are split into two steps. 

\textit{Step 1.} Suppose that $U-\varphi$ takes local maximum at $z_0:=(t_0,\rho_0,x_0)$ in the neighborhood $B_r(z_0)$ for some  $\varphi\in\mathcal C^{1,1,2}((0,T)\times\mathcal P^{\circ}(G)\times\mathbb R^n),$ where $r\in(0,\frac12)$ is some small constant chosen so that $B_r(z_0)$ remains inside the interior of the domain $(0,T) \times \mathcal P^{\circ}(G) \times \mathbb R^n$, and will be further specified (see \eqref{def-r}).   For any $t\in(t_0, T],$ by Proposition \ref{dynamic-pro}, 
we have that for any 
 admissible controls $\mathbb V\in\mathscr V_{\ell}[t_0,T],$ 
\begin{align*} 
U(t_0,\rho_0,x_0) \leq  \mathbb E_{t_0} \Big[ \int_{t_0}^{t} F(s,\rho(s),S(s),\mathbb V(s))  ds + U( t,\rho( t),S( t)) \Big]. 
\end{align*}In particular, 
for a constant control $\mathbb V(s)\equiv\mathbb V\in B_{\ell}$ for $s\in[t_0,T]$, we define the stopping time $\tau_r:=\inf\{s\in(t_0,T]:\|S(s)-x_0\|\wedge \|\rho(s)-\rho_0\|\ge r,S(t_0)=x_0,\rho(t_0)=\rho_0\}.$ Then we obtain     
\begin{align}\label{split-Iestimate}
0&\leq  \mathbb E_{t_0}\Big[\int_{t_0}^{t}F(s,\rho(s),S(s),\mathbb V)\mathrm ds+ U(t,\rho(t),S(t))-U(t_0,\rho_0,x_0)\Big]\notag\\
&= \mathbb E_{t_0}\Big[\int_{t_0}^{t}F(s,\rho(s),S(s),\mathbb V)\mathrm ds+ \big(U(t\wedge \tau_r,\rho(t\wedge \tau_r),S(t\wedge \tau_r))-U(t_0,\rho_0,x_0)\big)\Big]\notag\\&\quad + \mathbb E_{t_0}\Big[\big(U(t,\rho(t),S(t))-U(t\wedge\tau_r,\rho(t\wedge\tau_r),S(t\wedge\tau_r))\big)\Big]\notag\\
&=:I_1(t,t_0)+I_2(t,t_0).
\end{align}
Due to that $U-\varphi$ takes maximum at $z_0$ in $B_r(z_0),$ we obtain 
\begin{align}\label{insert1}
&\quad I_1(t,t_0)\notag\\&\leq \mathbb E_{t_0}\Big[\int_{t_0}^{t}F(s,\rho(s),S(s),\mathbb V)\mathrm ds+ \big(\varphi(t\wedge\tau_r,\rho(t\wedge\tau_r),S(t\wedge\tau_r))-\varphi(t_0,\rho_0,x_0)\big)\Big]\notag\\&= \mathbb E_{t_0}\Big[\int_{t_0}^{t}F(s,\rho(s),S(s),\mathbb V)\mathrm ds+ \int_{t_0}^{t\wedge\tau_r}\Big(\frac{\partial\varphi}{\partial s}(s,\rho(s),S(s))+\frac12 \mathrm{tr}(\sigma\sigma^{\top}D^2_S\varphi(s,\rho(s),S(s)))\notag\\
&\quad +\langle {\partial_{\rho}\varphi(s,\rho(s),S(s))}, D_S\mathcal H^{\mathbb V}_0(\rho(s),S(s))\rangle +\langle {D_{S} \varphi(s,\rho(s),S(s))},-D_{\rho}\mathcal H^{\mathbb V}_0(\rho(s),S(s))\rangle \Big)\mathrm ds\notag\\
&\quad - \int_{t_0}^{t\wedge\tau_r}\langle {D_{S} \varphi(s,\rho(s),S(s))},\sigma\mathrm dW(s)\rangle\Big],
\end{align}   
where in the last step  we use the It\^o formula before the stopping time $\tau_r$ (recall the SWHS \eqref{WHflow1}--\eqref{WHflow2}), making use of the property $\frac{\mathrm d}{\mathrm ds}f(\rho(s))=\langle\partial_{\rho}f(\rho(s)),\dot{\rho}(s)\rangle$ for a function $f:\mathcal P^{\circ}(G)\to\mathbb R$ whose Fr\'echet derivative $\partial_{\rho}f(\rho(s))$ exists (see \cite[Lemma 3.16]{MCC}). 

We now specify the choice of the parameter $r$ to ensure that the drift and diffusion coefficients in the SWHS remain bounded before the stopping time, which will be used in applying the dominated convergence theorem as $t$ approaches $t_0$. 
Since $z_0$ is an interior point of the domain $(0,T)\times\mathcal P^{\circ}(G)\times\mathbb R^n$, there exists a small constant $\epsilon_1>0$ such that the $\epsilon_1$-neighborhood $B_{\epsilon_1}(z_0)\subset (0,T)\times\mathcal P^{\circ}(G)\times\mathbb R^n$. Then we choose $r<\epsilon_1.$ Before the stopping time $\tau_r$, this ensures that $$\|S(s)-x_0\|\leq r<\epsilon_1,\|\rho(s)-\rho_0\|\leq r<\epsilon_1,$$ which implies  
 \begin{align}\sup_{i\in V}|\rho_i(s)-\rho_{0,i}|&\leq r,\label{eq-rho1}\\
   \sup_{(i,j)\in E}|S_i(s)-S_j(s)|&\leq \sup_{(i,j)\in E}(|S_i(s)-x_{0,i}|+|S_j(s)-x_{0,j}|+|x_{0,i}|+|x_{0,j}|)\leq 2(r+\|x_0\|),\label{eq-S1}
\end{align} 
where $\rho_0=(\rho_{0,1},\ldots,\rho_{0,n})$ and $x_0=(x_{0,1},\ldots,x_{0,n}).$ Moreover, letting $\min_{i\in V}\rho_{0,i}=\epsilon_2\in(0,1),$ we obtain from the triangle inequality and \eqref{eq-rho1} that \begin{align}\label{eq-rho2}\rho_i(s)\ge \rho_{0,i}-|\rho_{0,i}-\rho_i(s)|\ge \epsilon_2-r,\quad \forall\,i\in V.
\end{align} Hence we may take \begin{align}\label{def-r}r=\frac12(\epsilon_2\wedge \epsilon_1).
\end{align} As a result, for $s\leq \tau_r$, in view of \eqref{eq-S1} and \eqref{eq-rho2}, and by the locally Lipschitz continuity (Definition \ref{def-functions} (ii)) of partial derivatives of $\mathcal H_0$ (Assumption \ref{energy}) and linearity of the control potential $\mathcal V$, the coefficients $D_S\mathcal H^{\mathbb V}_0(\rho(s),S(s))$ and $D_{\rho}\mathcal H^{\mathbb V}_0(\rho(s),S(s))$ of the SWHS are bounded by a constant depending on $r,z_0$.  Namely, 
\begin{align*}
\|D_S\mathcal H_0^{\mathbb V}(\rho(s),S(s))\|&\leq \|D_S\mathcal H_0^{\mathbb V}(\rho(s),S(s))-D_S\mathcal H_0^{\mathbb V}(\rho_0,x_0)\|+\| D_S\mathcal H_0^{\mathbb V}(\rho_0,x_0)\|\\&\leq C(\|S(s)-x_0\|+\|\rho(s)-\rho_0\|)+ \| D_S\mathcal H_0^{\mathbb V}(\rho_0,x_0)\|\leq C(r,z_0),\\
\|D_{\rho}\mathcal H_0^{\mathbb V}(\rho(s),S(s))\|&\leq \|D_{\rho}\mathcal H_0^{\mathbb V}(\rho(s),S(s))-D_{\rho}\mathcal H_0^{\mathbb V}(\rho_0,x_0)\|+\| D_{\rho}\mathcal H_0^{\mathbb V}(\rho_0,x_0)\|\\&\leq C(\|S(s)-x_0\|+\|\rho(s)-\rho_0\|)+ \| D_{\rho}\mathcal H_0^{\mathbb V}(\rho_0,x_0)\|\leq C(r,z_0).
\end{align*}

Hence, 
dividing  by $t-t_0$ and sending $t\to t_0$, we derive from the condition \eqref{ass11}, moment estimate \eqref{H_0estimate},  dominated convergence theorem and the continuity of ${\partial_{\rho} \varphi},{D_x \varphi},D^2_x\varphi$, $D_x\mathcal H_0,D_{\rho}\mathcal H_0$ that  
\begin{align}\label{I1estimate1}
\lim_{t\to t_0}\frac{I_1(t,t_0)}{t-t_0}\leq \frac{\partial\varphi}{\partial t}(t_0,\rho_0,x_0)+\mathbb H(t,\rho,x,\mathbb V,\partial_{\rho}\varphi,D_x\varphi,D^2_x\varphi)|_{(t_0,\rho_0,x_0)},
\end{align}
where we recall \eqref{def-mathbbH}. 

For the term $I_2(t,t_0),$ define another stopping time: for some fixed $\alpha>0,$  ${\tilde \tau}_{\alpha}:=\inf\{s\in(t_0,T]:|\mathcal H_0(\rho(s),S(s))-\mathcal H_0(\rho_0,x_0)|\ge \alpha\}.$ Then we have 
\begin{align}\label{I-estimate}
I_2(t,t_0)&=\mathbb E_{t_0}\Big[\big(U(t,\rho(t),S(t))-U(\tau_r,\rho(\tau_r),S(\tau_r))\big)\mathbf 1_{\{ \omega\in\Omega:\tau_r(\omega)< t\}}\Big]\notag\\
&=\mathbb E_{t_0}\Big[\big(U(t,\rho(t),S(t))-U(\tau_r,\rho(\tau_r),S(\tau_r))\big)\mathbf 1_{\{\omega\in\Omega:\tilde\tau_{\alpha}(\omega)\ge t>  \tau_r(\omega)\}}\Big]\notag\\&\quad + \mathbb E_{t_0}\Big[\big(U(t,\rho(t),S(t))-U(\tau_r,\rho(\tau_r),S(\tau_r))\big)\mathbf 1_{\{\omega\in\Omega:\tilde\tau_{\alpha}(\omega)<t,\, t>  \tau_r(\omega)\}}\Big]\notag\\
&=:I_{2,1}(t,t_0)+I_{2,2}(t,t_0).
\end{align}
For the term $I_{2,1}(t,t_0),$ by means of the Chebyshev inequality, we deduce that for any $p\ge 1,$
\begin{align}\label{DPP-11}
&\quad \mathbb P(\tilde\tau_{\alpha}\ge t>\tau_r)\notag\\
&\leq \mathbb P\Big(\sup_{s\in[t_0,t]}(\|\rho(s)-\rho_0\|+\|S(t)-x_0\|)>r,\,\sup_{s\in[t_0,t]} |\mathcal H_0(\rho(s),S(s))-\mathcal H_0(\rho_0,x_0)|<\alpha\Big)\notag\\
&=\mathbb P\Big(\sup_{s\in[t_0,t]}(\|\rho(s)-\rho_0\|+\|S(s)-x_0\|)\mathbf 1_{\{ \sup\limits_{s\in[t_0,t]}|\mathcal H_0(\rho(s),S(s))-\mathcal H_0(\rho_0,x_0)|<\alpha\}}>r\Big)\notag\\
&\leq \mathbb E_{t_0}\Big[\sup\limits_{s\in[t_0,t]} (\|\rho(s)-\rho_0\|+\|S(s)-x_0\|)^p\mathbf 1_{\{ \sup\limits_{s\in[t_0,t]}|\mathcal H_0(\rho(s),S(s))-\mathcal H_0(\rho_0,x_0)|<\alpha\}}\Big]r^{-p}.
\end{align} 

Now we are in a position to estimate the conditional expectation on the right-hand side of \eqref{DPP-11}. 
Recalling the SWHS \eqref{WHflow1}--\eqref{WHflow2}, where $\mathcal H_1$ only depends on the density variable (Assumption \ref{energy}), and using condition \eqref{bound_prop}, we obtain 
\begin{align*}
\sup_{s\in[t_0,t]}\|\rho(s)-\rho_0\|=\sup_{s\in[t_0,t]}\Big\|\int_{t_0}^sD_S\mathcal H^{\mathbb V}_0(\rho(r),S(r))\mathrm dr\Big\|\leq  \int_{t_0}^t(C_1|\mathcal H^{\mathbb V}_0(\rho(r),S(r))|+C_2)\mathrm dr.
\end{align*} 
Taking the $p$th moment on both sides and using \eqref{H_0estimate} into account, we get
\begin{align}\label{rho-regularity}
\mathbb E_{t_0}\Big[\sup_{s\in[t_0,t]}\|\rho(s)-\rho_0\|^p\Big]\leq C(t-t_0)^p. 
\end{align}  
It follows from 
$\mathcal H_0\in(\textrm{Pr})$ (Assumption \ref{energy}) and Definition \ref{def-functions} (\textrm{Pr}-\romannumeral2) that when $|\mathcal H_0(\rho(s),S(s))-\mathcal H_0(\rho_0,x_0)|\leq \alpha,$ we have 
\begin{align*}\min_{i\in V}\rho_i(s)\ge C_1(\alpha),\quad  \sup_{(i,j)\in E}|S_i(s)-S_j(s)|\leq C_2(\alpha)
\end{align*} for some positive constants $C_1(\alpha),C_2(\alpha).$ This, together with the assumption that partial derivatives of $\mathcal H_0$ is locally Lipschitz continuous (Assumotion \ref{energy}) in the sense of Definition  \ref{def-functions} (\romannumeral2) (where we take $R_1=\min\{C_1(\alpha),\min_{i\in V}\rho_{0,i}\},R_2=\max\{C_2(\alpha),\max_{(i,j)\in E}|x_{0,i}-x_{0,j}|\}$), implies that, when $|\mathcal H_0(\rho(s),S(s))-\mathcal H_0(\rho_0,x_0)|\leq \alpha,$ \begin{align*}\|D_{\rho}\mathcal H^{\mathbb V}_0(\rho(s),S(s))\|&\leq \|D_{\rho}\mathcal H^{\mathbb V}_0(\rho_0,x_0)\|+C(R_1,R_2)(\|\rho(s)-\rho_0\|+\|S(s)-x_0\|)\\
&\leq C(\alpha,\rho_0,x_0)(1+\|\rho(s)-\rho_0\|+\|S(s)-x_0\|).
\end{align*} 
Hence, according to the SWHS \eqref{WHflow2}, when $s\leq\tilde\tau_{\alpha},$
\begin{align*}
\|S(s)-x_0\|&=\Big\|-\int_{t_0}^sD_{\rho}\mathcal H^{\mathbb V}_0(\rho(r),S(r))\mathrm dr-\int_{t_0}^s\sigma\mathrm dW(r)\Big\|\\
&\leq C(\alpha,\rho_0,x_0)\int_{t_0}^s(1+\|\rho(r)-\rho_0\|+\|S(r)-x_0\|)\mathrm dr+\Big\|\int_{t_0}^s\sigma\mathrm dW(r)\Big\|. 
\end{align*}
By the definition of $\tilde\tau_{\alpha},$ and the Burkholder inequality, we obtain that for any $p\ge 2,$
\begin{align*}
&\quad \mathbb E_{t_0}\Big[\sup_{s\in[t_0,t]}\|S(s)-x_0\|^p\mathbf 1_{\{ \sup_{s\in[t_0,t]}|\mathcal H_0(\rho(s),S(s))-\mathcal H_0(\rho_0,x_0)|<\alpha\}}\Big]
\\&\leq \mathbb E_{t_0}\Big[\sup_{s\in[t_0,t\wedge \tilde\tau_{\alpha}]}\|S(s)-x_0\|^p\Big]\\
&\leq C(\alpha,\rho_0,x_0,p)\mathbb E_{t_0}\Big[\int_{t_0}^{t\wedge\tilde\tau_{\alpha}}(1+\|\rho(r)-\rho_0\|^p+\|S(r)-x_0\|^p)\mathrm dr\Big]\\
&\quad +C(p)\mathbb E_{t_0}\Big[\sup_{s\in[t_0,t]}\Big\|\int_{t_0}^s\sigma\mathrm dW(r)\Big\|^p\Big]\\
&\leq C(\alpha,\rho_0,x_0,p)\mathbb E_{t_0}\Big[\int_{t_0}^{t}(1+\|\rho(r)-\rho_0\|^p+\sup_{u\in [t_0,r\wedge \tilde\tau_{\alpha} ]}\|S(u)-x_0\|^p)\mathrm dr\Big] +C(p)(t-t_0)^{\frac{p}{2}}, 
\end{align*} 
where in the second inequality we use the fact that $$\mathbb E_{t_0}\Big[\sup_{s\in[t_0,t\wedge\tilde\tau_{\alpha}]}\|\int_{t_0}^s\sigma\mathrm dW(r)\|^p\Big]\leq \mathbb E_{t_0}\Big[\sup_{s\in[t_0,t]}\|\int_{t_0}^s\sigma\mathrm dW(r)\|^p\Big].$$
On account of \eqref{rho-regularity} and the Gr\"onwall inequality, we arrive at 
\begin{align}\label{S-regularity}
\mathbb E_{t_0}\Big[\sup_{s\in[t_0,t]}\|S(s)-x_0\|^p\mathbf 1_{\{ \sup_{s\in[t_0,t]}|\mathcal H_0(\rho(s),S(s))-\mathcal H_0(\rho_0,x_0)|<\alpha\}}\Big]\leq C_{\alpha}(t-t_0)^{\frac p2}. 
\end{align}

Plugging the two time regularity estimates \eqref{rho-regularity} and \eqref{S-regularity} into \eqref{DPP-11} gives that for any $p\ge 2,$ 
\begin{align*}
\mathbb P(\tilde\tau_{\alpha}\ge t>\tau_r)\leq C_{\alpha}r^{-p}(t-t_0)^{\frac p2}.
\end{align*}
In particular, when $p=6$, we have $\mathbb P(\tilde\tau_{\alpha}\ge t>\tau_r)\leq C_{\alpha}r^{-6}(t-t_0)^{3}.$
This, together with the H\"older inequality, the growth property of $U$ \eqref{growth-U} and moment boundedness \eqref{H_0estimate} 
 implies that for $t\in(t_0,T],$  
\begin{align}\label{I11-estimate}
&\quad \frac{1}{t-t_0}I_{2,1}(t,t_0)\notag\\
&\leq (\mathbb E_{t_0}[|U(t,\rho(t),S(t))|^2+|U(\tau_r,\rho(\tau_r),S(\tau_r))|^2])^{\frac12}(\mathbb P( \tilde\tau_{\alpha}\ge t>\tau_r))^{\frac12}(t-t_0)^{-1}\notag\\
&\leq C\Big(1+\Big(\mathbb E_{t_0}\Big[\sup_{t\in[t_0,T]}|\mathcal H_0(\rho(t),S(t))|^{2p_1}\Big]\Big)^{\frac12}\Big) \Big(\mathbb P( \tilde\tau_{\alpha}\ge t>\tau_r)\Big)^{\frac12}(t-t_0)^{-1}\notag\\
&\leq C\Big(1+\sup_{i\in V}|\mathbb V_i|+\Big(\mathbb E_{t_0}\Big[\sup_{t\in[t_0,T]}|\mathcal H^{\mathbb V}_0(\rho(t),S(t))|^{2p_1}\Big]\Big)^{\frac12})\Big (\mathbb P( \tilde\tau_{\alpha}\ge t>\tau_r)\Big)^{\frac12}(t-t_0)^{-1}\notag\\
&\leq C_{\alpha}r^{-3}(t-t_0)^{\frac12}\to0\text{ as }t\to t_0.
\end{align}

For the term $I_{2,2}(t,t_0),$  using the Chebyshev inequality yields that for any $p\ge 1,$
\begin{align}\label{chebyshev1}
&\quad \mathbb P(\tilde\tau_{\alpha}<t,\,t>\tau_r)\leq \mathbb P(\tilde\tau_{\alpha}<t)\notag\\
&\leq \mathbb P(|\mathcal H_0(\rho(t),S(t))-\mathcal H_0(\rho_0,x_0)|\ge\alpha)\leq \alpha^{-p}\mathbb E_{t_0}[|\mathcal H_0(\rho(t),S(t))-\mathcal H_0(\rho_0,x_0)|^p].
\end{align}
 Recalling the definition of $\mathcal H^{\mathbb V}_0$ and $\mathcal V$ in \eqref{def-H0}, we use the time regularity of $\mathcal H^{\mathbb V}_0(\rho(\cdot),S(\cdot))$ in \eqref{moment-cont-1}, and \eqref{rho-regularity} to deduce that for any $p\ge 2,$  
\begin{align*}
&\quad \mathbb E_{t_0}[|\mathcal H_0(\rho(t),S(t))-\mathcal H_0(\rho_0,x_0)|^p]\\
&\leq\mathbb E_{t_0}[|\mathcal H^{\mathbb V}_0(\rho(t),S(t))-\mathcal V(\rho(t))-\mathcal H^{\mathbb V}_0(\rho_0,x_0)+\mathcal V(\rho_0)|^p] \\
&\leq C(t-t_0)^{\frac p2}+ C\mathbb E_{t_0}[\|\rho(t)-\rho_0\|^p]\leq C(t-t_0)^{\frac p2}. 
\end{align*}
Plugging this inequality into \eqref{chebyshev1} leads to 
\begin{align*}
 \mathbb P(\tilde\tau_{\alpha}<t,\,t>\tau_r)\leq C\alpha^{-p}(t-t_0)^{\frac p2}. 
\end{align*}
Similar to the estimate of $I_{2,1}(t,t_0)$ in \eqref{I11-estimate}, we derive that 
\begin{align}\label{I22-estimate}
\frac{1}{t-t_0}I_{2,2}(t,t_0)\leq C\alpha^{-3}(t-t_0)^{\frac12}\to0\text{ as }t\to t_0. 
\end{align}
Gathering estimates \eqref{I11-estimate} and \eqref{I22-estimate}, we obtain from \eqref{I-estimate} that 
\begin{align*}
\lim_{t\to t_0}\frac{I_2(t,t_0)}{t-t_0}=0. 
\end{align*}
Combining this with \eqref{I1estimate1} and  \eqref{split-Iestimate} produces 
\begin{align*}
0\leq \lim_{t\to t_0}\frac{1}{t-t_0}(I_{1}(t,t_0)+I_{2}(t,t_0))\leq \frac{\partial\varphi}{\partial t}(t_0,\rho_0,x_0)+\mathbb H(t,\rho,x,\mathbb V,\partial_{\rho}\varphi,D_x\varphi,D^2_x\varphi)|_{(t_0,\rho_0,x_0)}. 
\end{align*}  
Taking the infimum over $\mathbb V\in B_{\ell}$ verifies 
that $U$ is a viscosity subsolution of \eqref{HJB}.

\textit{Step 2.} Next, let $U-\varphi$ attain a local minimum at $z_0:=(t_0,\rho_0,x_0)$ in the neighborhood $B_r(z_0)$ with some $r\in(0,\frac12)$ for $\varphi\in\mathcal C^{1,1,2}((0,T)\times\mathcal P^{\circ}(G)\times\mathbb R^n).$  By Proposition \ref{dynamic-pro}, for any
$\epsilon,\delta>0,$ there exists $\mathbb V^{\epsilon,\delta}\in \mathscr V_{\ell}[t_0,T]$ such that
{\small  
\begin{align} \label{split2-estimate}
&\epsilon\delta>\mathbb E_{t_0}\Big[\int_{t_0}^{t_0+\epsilon}F(s,\rho^{\mathbb V^{\epsilon,\delta}}(s),S^{\mathbb V^{\epsilon,\delta}}(s),\mathbb V^{\epsilon,\delta}(s))\mathrm ds\notag\\
&+U(t_0+\epsilon,\rho^{\mathbb V^{\epsilon,\delta}}(t_0+\epsilon),S^{\mathbb V^{\epsilon,\delta}}(t_0+\epsilon))-U(t_0,\rho_0,x_0)\Big]\notag\\
&\ge \mathbb E_{t_0}\Big[\int_{t_0}^{t_0+\epsilon}F(s,\rho^{\mathbb V^{\epsilon,\delta}}(s),S^{\mathbb V^{\epsilon,\delta}}(s),\mathbb V^{\epsilon,\delta}(s))\mathrm ds\notag\\&+ \big(U((t_0+\epsilon)\wedge \tau^{\epsilon,\delta}_r,\rho^{\mathbb V^{\epsilon,\delta}}((t_0+\epsilon)\wedge \tau^{\epsilon,\delta}_r),S^{\mathbb V^{\epsilon,\delta}}((t_0+\epsilon)\wedge \tau^{\epsilon,\delta}_r))-U(t_0,\rho_0,x_0)\big)\Big]\notag\\
&+\mathbb E_{t_0}\Big[U(t_0+\epsilon,\rho^{\mathbb V^{\epsilon,\delta}}(t_0+\epsilon),S^{\mathbb V^{\epsilon,\delta}}(t_0+\epsilon))-U((t_0+\epsilon)\wedge \tau^{\epsilon,\delta}_r,\rho((t_0+\epsilon)\wedge \tau^{\epsilon,\delta}_r),S((t_0+\epsilon)\wedge \tau^{\epsilon,\delta}_r))\Big]\notag\\
&=:J_1(t_0+\epsilon,t_0)+J_2(t_0+\epsilon,t_0),
\end{align}} 
 where the stopping time $\tau^{\epsilon,\delta}_r:=\inf\{s\in(t_0,T]:\|S^{\mathbb V^{\epsilon,\delta}}(s)-x_0\|\wedge \|\rho^{\mathbb V^{\epsilon,\delta}}(s)-\rho_0\|>r,S^{\mathbb V^{\epsilon,\delta}}(t_0)=x_0,\rho^{\mathbb V^{\epsilon,\delta}}(t_0)=\rho_0\}.$ 

 For the term $J_1(t_0+\epsilon,t_0),$ the estimate is similar to that of $I_1(t,t_0)$ in \eqref{split-Iestimate}. By using that $U-\varphi$ attains a local minimum at $z_0:=(t_0,\rho_0,x_0)$ in $B_r(z_0)$, then applying the It\^o formula   to $\varphi$, dividing by $\epsilon$ and by means of the dominated convergence theorem, we deduce 
 \begin{align}\label{split2-estimate1}
\lim_{\epsilon\to0}\frac1{\epsilon}J_1(t_0+\epsilon,t_0)&\ge \frac{\partial\varphi}{\partial t}(t_0,\rho_0,x_0)+\mathbb H(t,\rho,x,\mathbb V,\partial_{\rho}\varphi,D_x\varphi,D^2_x\varphi)|_{(t_0,\rho_0,x_0)}\notag\\
&\ge \frac{\partial\varphi}{\partial t}(t_0,\rho_0,x_0)+ H(t,\rho,x,\partial_{\rho}\varphi,D_x\varphi,D^2_x\varphi)|_{(t_0,\rho_0,x_0)}. 
 \end{align} 
Here, we recall definitions of $\mathbb H$ and $H$ in  \eqref{def-mathbbH} and \eqref{def_H}, respectively.

For the term $J_2(t_0+\epsilon,t_0),$ the estimate is similar to that of $I_2(t,t_0)$ in \eqref{split-Iestimate}. Following the arguments of \eqref{I-estimate}, \eqref{I11-estimate} and \eqref{I22-estimate}, one can derive that 
\begin{align*}
\lim_{\epsilon\to0}\frac{1}{\epsilon}J_2(t_0+\epsilon,t_0)=0.
\end{align*}
Combining this with \eqref{split2-estimate} and \eqref{split2-estimate1} gives 
\begin{align*}
\delta\ge \lim_{\epsilon\to0}\frac{1}{\epsilon}(J_1(t_0+\epsilon,t_0)+J_2(t_0+\epsilon,t_0))\ge \frac{\partial\varphi}{\partial t}(t_0,\rho_0,x_0)+ H(t,\rho,x,\partial_{\rho}\varphi,D_x\varphi,D^2_x\varphi)|_{(t_0,\rho_0,x_0)}. 
\end{align*}
Then letting $\delta\to0$ yields that $U$ is a viscosity supersolution of \eqref{HJB}. 
\end{proof}

\section{Proof of Theorem \ref{uniqueness}}\label{sec_5}

This section establishes the uniqueness of viscosity solutions for the HJB equation \eqref{HJB} by means of the doubling of variable method.  The main challenge arises from the unbounded domain and the intricate structure of the state equation \eqref{WHflow1}--\eqref{WHflow2}. To address this, we employ an energy truncation technique that localizes the problem to a level set of the energy via smooth cut-off functions. Meanwhile, the truncated equation could preserve the structure of the original equation in a compact domain.  In Assumption \ref{energy}, we impose that $\mathcal H_0\in(\mathrm{Pr})$. According to Definition \ref{def-functions} (\romannumeral1), this requires the existence of $C_0>0$ such that $\widetilde{\mathcal H}_0:=\mathcal H_0-C_0\ge 0.$ For notational convenience, throughout this section we assume $\mathcal H_0\ge 0.$ Alternatively, one may work with $\widetilde{\mathcal H}_0$, which does not affect the proof.

\subsection{Truncated  HJB equation for optimal control problem of SWHS on graph}

In this subsection, we first provide a formal derivation of the truncated HJB equation. Then we introduce the semiconvex and semiconcave approximations of the viscosity solution of the truncated HJB equation. Some useful properties for these approximations are also presented. 

For a fixed $R>0,$ we introduce a positive non-increasing truncation function $\phi_R\in\mathcal C^{\infty}(\mathbb R_+;\mathbb R_+)$ satisfying that 
\begin{align}\label{phi-property}
\phi_R\equiv 1,\text{ when }0\leq r\leq R; \quad \phi_R= R^{-\beta},\text{ when } r\ge 2R,\end{align}  where $\beta\in(0,1)$ is given in Assumption \ref{ass_domi}. Moreover,  
\begin{align}\label{phi-property2}
|\phi_R(r)|\to 1\,(R\to\infty), \quad |\phi'_R(r)|\leq \frac{C}{R},\;|\phi''_R(r)|\leq \frac{C}{R^2}
\end{align} for some $C>0.$ Below we give  some concrete examples for $\phi_R$ satisfying  \eqref{phi-property} and \eqref{phi-property2}. 
\begin{example} 
Let $\phi$ be a smooth non-increasing cut-off function $\phi:\mathbb R\to[0,1]$ satisfying 
$\phi(r)=1,\,r\leq 0,$ and $\phi(r)=0,\,r\ge 1.$ We define the non-increasing truncation function $\phi_R:\mathbb R_+\to \mathbb R_+$ for $R>0$ by  
\begin{align*}
\phi_R(r)=
\begin{cases}
1,\quad \text{if }r\in[0, R];\\
R^{-\beta}+(1-R^{-\beta})\phi(\frac{r-R}{R}),\quad \text{if }r\in(R,2R);\\
R^{-\beta},\quad \text{if }r\in[2R,+\infty).
\end{cases}
\end{align*}
It can be seen  that $\phi_R$ satisfies the properties \eqref{phi-property}--\eqref{phi-property2}. 
Two concrete examples of $\phi$ are as follows: 
\begin{align*}
\phi(r)=
\begin{cases}
1,\quad \text{if } r\leq 0,\\
e^{-\frac{1}{1-r}}(e^{-\frac{1}{1-r}}+e^{-\frac{1}{r}})^{-1},\quad \text{if }r\in(0,1),\\
0,\quad \text{if }r\ge 1,
\end{cases}
\text{ and }\;
\phi(r)=
\begin{cases}
1,\quad \text{if } r\leq 0,\\
e^{1-\frac{1}{1-r^2}},\quad \text{if }r\in(0,1),\\
0,\quad \text{if }r\ge 1.
\end{cases}
\end{align*} 
\end{example}

With the truncation function $\phi_R$, we truncate $\mathcal H_0(\rho,x)$ to obtain the truncated domain: \begin{align}\label{def-A-phi}
\mathcal A_R:=\{(\rho,x):\mathcal H_0(\rho,x)< 2R\},  \quad \phi_{R,0}(\rho,x):=\phi_{R}(\mathcal H_0(\rho,x)).
\end{align}  
We now present a formal derivation of the truncated HJB equation. As a first step, we multiply \eqref{HJB} by $\phi_{R,0}$  
 \begin{align}\label{phi-multiply}
&\phi_{R,0}\frac{\partial U}{\partial t} +\phi_{R,0}\Big(\langle \partial_{\rho}U, D_x\mathcal H_0(\rho,x) \rangle - \langle D_xU, D_{\rho}\mathcal H_0(\rho,x) \rangle \notag\\& + \frac{1}{2} \mathrm{tr}(\sigma \sigma^\top D^2_xU) - \widehat F(t, \rho, x, D_xU)\Big)=0. 
 \end{align} 
Introduce the new function $\widehat U:=U\phi_{R,0},$ which will serve as the candidate solution of the truncated HJB equation. Our goal is therefore to derive an equation satisfied by $\widehat U$. To this end, we note the following identities:  
 \begin{align*}
 &\phi_{R,0}D_xU=D_x\widehat U-UD_x\phi_{R,0},\quad \phi_{R,0}\partial_{\rho}U=\partial_{\rho}\widehat U-U\partial_{\rho}\phi_{R,0},
 \end{align*}
 and for $i,j=1,\ldots,n,$\begin{align*}
 &\phi_{R,0}D^2_{x_ix_j}U=D^2_{x_ix_j}\widehat U-UD^2_{x_ix_j}\phi_{R,0}-D_{x_i}\phi_{R,0}D_{x_j}U-D_{x_j}\phi_{R,0}D_{x_i}U\\
 &=D^2_{x_ix_j}\widehat U-UD^2_{x_ix_j}\phi_{R,0}-\frac{D_{x_i}\phi_{R,0}}{\phi_{R,0}}D_{x_j}\widehat U-\frac{D_{x_j}\phi_{R,0}}{\phi_{R,0}}D_{x_i}\widehat U+2U\frac{D_{x_i}\phi_{R,0}D_{x_j}\phi_{R,0}}{\phi_{R,0}},
 \end{align*}
 due to $D_{x_j}U=\frac{D_{x_j}\widehat U-UD_{x_j}\phi_{R,0}}{\phi_{R,0}}.$  
Substituting these expressions into \eqref{phi-multiply}  yields  
\begin{align}\label{HRU-def}
 &H^R_U(t,\rho,x,\partial_{\rho}\widehat U,D_x\widehat U,D^2_x \widehat U):=\Big\{\langle \partial_{\rho}\widehat U,D_x\mathcal H_0\rangle-\langle D_x\widehat U,D_{\rho}\mathcal H_0\rangle-\frac{\sigma\sigma^{\top}D_x\phi_{R,0}}{\phi_{R,0}}D_x\widehat U\notag\\&-U\Big(\langle \partial_{\rho}\phi_{R,0},D_x\mathcal H_0\rangle-\langle D_x\phi_{R,0},D_{\rho}\mathcal H_0\rangle +\frac12\mathrm{tr}(\sigma\sigma^{\top}D^2_x\phi_{R,0})-\frac{\sigma\sigma^{\top}D_x\phi_{R,0}}{\phi_{R,0}}D_x\phi_{R,0}\Big)\notag\\
 &+\frac12\mathrm{tr}(\sigma\sigma^{\top}D^2_x\widehat U)-\sup_{\mathbb V\in B_{\ell}}\Big[\langle D_x\widehat U,\mathbb V\rangle-U\langle D_x\phi_{R,0},\mathbb V\rangle-\phi_{R,0}F(t,\rho,x,\mathbb V)\Big]\Big\}\Big|_{(t,\rho,x)}.
 \end{align} 
According to the condition \eqref{assu4-eq} in Assumption \ref{ass_domi}, i.e., $D_x\mathcal H_0(\rho,x)\in T_{\rho}\mathcal P(G),$  
 we obtain the relation 
 \begin{align}\label{technical-eq}&\langle \partial_{\rho}\phi_{R,0},D_x\mathcal H_0\rangle
  =\phi'_R\langle {\partial_{\rho}\mathcal H_0},D_x\mathcal H_0\rangle 
 =\phi'_R\langle D_{\rho}\mathcal H_0,D_x\mathcal H_0\rangle
 =\langle D_x\phi_{R,0},D_{\rho}\mathcal H_0\rangle, 
 \end{align}
where we recall that $\partial_{\rho}\mathcal H_0$ is the Fr\'echet derivative and $D_{\rho}\mathcal H_0$ is the Euclidean gradient of $\mathcal H_0$. 
 
Substituting \eqref{technical-eq} into \eqref{HRU-def}, we obtain the truncated Hamiltonian \begin{align}\label{def_H4}
&H^R_U(t,\rho,x,p,q,Q)=\langle p,D_x\mathcal H_0(\rho,x)\rangle
-\langle q,D_{\rho}\mathcal H_0(\rho,x)\rangle
 +\frac{1}{2}\mathrm {tr}(\sigma\sigma^{\top}Q)-  \frac{1}{\phi_{R,0}}\mathrm {tr}(\sigma\sigma^{\top}D_x\phi_{R,0}q^{\top})\notag\\
&-U\Big(\frac 12\mathrm {tr}(\sigma\sigma^{\top}D^2_x\phi_{R,0})-\frac{1}{\phi_{R,0}} \mathrm {tr}(\sigma\sigma^{\top}D_x\phi_{R,0}(D_x\phi_{R,0})^{\top}) \Big)-\mathbb F^R(t,\rho,x,q-UD_x\phi_{R,0}),
\end{align} 
where $t\in(0,T),\,\rho\in\mathcal P^{\circ}(G),\,x,\,p,q\in \mathbb R^{n},Q\in\mathcal S^{n\times n},\phi_{R,0}=\phi_R(\mathcal H_0)$ and \begin{align}\label{FR-def}\mathbb F^R(t,\rho,x,q-UD_x\phi_{R,0})=\sup_{\mathbb V\in B_{\ell}}\{\langle q-UD_x\phi_{R,0},\mathbb V\rangle -\phi_{R,0}F(t,\rho,x,\mathbb V)\}.
\end{align} 
 Here, we use the subscript `$U$' in `$H^R_U$' to describe the dependence on the function $U$.  
Combining \eqref{def_H4} and \eqref{phi-multiply}, we thus formally derive 
$\frac{\partial \widehat U}{\partial t}+H^R_U(t,\rho,x,\partial_{\rho}\widehat U,D_x\widehat U,D^2_x \widehat U)=0.$

 Confining our attention to the domain $\mathcal A_R$, we formulate the \textit{truncated HJB equation} on $(0,T)\times\mathcal A_R:$ 
 \begin{align}\label{HJB_R}
 \frac{\partial \widehat U}{\partial t}+H^R_U(t,\rho,x,\partial_{\rho}\widehat U,D_x\widehat U,D^2_x \widehat U)=0,\quad \widehat U(T,\rho,x)=h(\rho,x).
 \end{align}  
Below is the definition of the viscosity solution of the truncated HJB equation \eqref{HJB_R}. For details of this type of truncated equations in the bounded domain $\{x\in\mathbb R^n:\|x\|<R\}$ of the Euclidean space $\mathbb R^n$, we refer to \cite[Section V.9]{book-controlled}.

 \begin{definition}\label{def1} Let $U\in\mathcal C((0,T)\times\mathcal P^{\circ}(G)\times\mathbb R^n)$ and $\widehat U=U\phi_{R,0}.$  

(\romannumeral1) A continuous function $\widehat U$ is called a viscosity subsolution of \eqref{HJB_R} in $\mathcal A_R$ if $U(T,\cdot)\leq h(\cdot)$, and  
for every $(t_0,\rho_0,x_0)\in(0,T)\times\mathcal A_R$ and every $\varphi\in\mathcal C^{1,1,2}((0,T)\times\mathcal A_R)$  such that $\widehat U-\varphi$ takes local maximum at $(t_0,\rho_0,x_0)$ with $\widehat U(t_0,\rho_0,x_0)=\varphi(t_0,\rho_0,x_0),$    we have  
\begin{align}\label{viscosity-sub-local}
&\frac{\partial\varphi}{\partial t}(t_0,\rho_0,x_0)+ H^R_{U}(t_0,\rho_0,x_0,\partial_{\rho}\varphi(t_0,\rho_0,x_0),D_x\varphi(t_0,\rho_0,x_0),D^2_x \varphi(t_0,\rho_0,x_0))\ge 0. 
\end{align}

(\romannumeral2) A continuous function $\widehat U$ is called a viscosity supersolution of \eqref{HJB_R} in $\mathcal A_R$ if $U(T,\cdot)\ge  h(\cdot),$ and for every $(t_0,\rho_0,x_0)\in(0,T)\times\mathcal A_R$ and every $\varphi\in\mathcal C^{1,1,2}((0,T)\times\mathcal A_R)$ such that $\widehat U-\varphi$ takes local minimum at $(t_0,\rho_0,x_0)$  with $\widehat U(t_0,\rho_0,x_0)=\varphi(t_0,\rho_0,x_0),$  we have  
\begin{align}\label{viscosity-super-local}
&\frac{\partial\varphi}{\partial t}(t_0,\rho_0,x_0)+H^R_{U}(t_0,\rho_0,x_0,\partial_{\rho}\varphi(t_0,\rho_0,x_0),D_x\varphi(t_0,\rho_0,x_0),D^2_x \varphi(t_0,\rho_0,x_0) )\leq 0.
\end{align}

(\romannumeral3) A continuous function $\widehat U$ is called a viscosity solution of \eqref{HJB_R} in $\mathcal A_R$ if it is both a viscosity subsolution and a viscosity supersolution in $\mathcal A_R$. 
\end{definition}
We remark that the additional condition ``$\widehat U(t_0,\rho_0,x_0)=\varphi(t_0,\rho_0,x_0)$'' in Defnition \ref{def1} (\romannumeral1) and (\romannumeral2) is  used in the proof of Lemma \ref{coro2_local}; see also \cite[Chapter II, Definition 4.1]{book-controlled} and \cite[Lemma I.1]{LionsII} for further details on such condition. The following lemma shows the relation between the viscosity solution of \eqref{HJB} and the viscosity solution of \eqref{HJB_R}, which will be used in the uniqueness proof of Theorem \ref{uniqueness}. The proof relies on the arguments involving \eqref{phi-multiply},  \eqref{HRU-def}, and \eqref{def_H4} on $\mathcal A_R.$ 
We postpone the proof to Appendix \ref{app1} for completeness.

\begin{lemma}\label{coro2_local}
If $U$ is a viscosity subsolution (resp. supersolution) of \eqref{HJB} in the whole space $(0,T)\times\mathcal P^{\circ}(G)\times\mathbb R^n$, then $\widehat U=U\phi_{R,0}$ is a viscosity subsolution  (resp. supersolution) of \eqref{HJB_R} in $(0,T)\times\mathcal A_R$.
\end{lemma}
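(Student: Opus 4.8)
The plan is to verify the two viscosity inequalities directly from the definitions, transferring a test function for $\widehat U$ into a test function for $U$ via the relation $\widehat U = U\phi_{R,0}$. Fix $(t_0,\rho_0,x_0)\in(0,T)\times\mathcal A_R$, so that $\phi_{R,0}(\rho_0,x_0)=\phi_R(\mathcal H_0(\rho_0,x_0))>0$ since $\mathcal H_0(\rho_0,x_0)<2R$; by continuity of $\mathcal H_0$ and $\phi_R$, $\phi_{R,0}$ stays bounded below by a positive constant on a neighborhood of $(\rho_0,x_0)$ inside $\mathcal A_R$. Suppose $U$ is a viscosity subsolution of \eqref{HJB} and let $\varphi\in\mathcal C^{1,1,2}$ be such that $\widehat U-\varphi$ has a local maximum at $(t_0,\rho_0,x_0)$ with $\widehat U(t_0,\rho_0,x_0)=\varphi(t_0,\rho_0,x_0)$. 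The natural candidate test function for $U$ is $\psi := \varphi/\phi_{R,0}$, which is well-defined and $\mathcal C^{1,1,2}$ on a neighborhood of $(t_0,\rho_0,x_0)$ (using that $\mathcal H_0\in(\mathrm{Pr})$ is $\mathcal C^1$, hence $\phi_{R,0}$ is $\mathcal C^1$ in $(\rho,x)$ and — because the second-order part only involves $x$ and $\mathcal H_0$ has the requisite $x$-smoothness built into Definition \ref{def-functions} and Assumption \ref{ass_domi} — $C^{1,1,2}$ overall). Since $\phi_{R,0}>0$ near the point, $U-\psi = (\widehat U-\varphi)/\phi_{R,0}$ combined with the normalization at the point shows $U-\psi$ also has a local maximum at $(t_0,\rho_0,x_0)$; more carefully, one writes $U-\psi = \widehat U/\phi_{R,0} - \varphi/\phi_{R,0}$ and uses that at the maximum point $\widehat U=\varphi$ together with the positivity and continuity of $\phi_{R,0}$ to conclude the local maximum property of $U-\psi$.

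Next I would apply the subsolution inequality \eqref{viscosity-sub} for $U$ with the test function $\psi$, obtaining
\[
\frac{\partial\psi}{\partial t} + \langle\partial_\rho\psi, D_x\mathcal H_0\rangle - \langle D_x\psi, D_\rho\mathcal H_0\rangle + \tfrac12\mathrm{tr}(\sigma\sigma^\top D^2_x\psi) - \widehat F(t_0,\rho_0,x_0,D_x\psi) \ge 0
\]
at $(t_0,\rho_0,x_0)$. The core of the argument is then purely algebraic: substitute $\psi=\varphi/\phi_{R,0}$ and expand $\partial_t\psi$, $\partial_\rho\psi$, $D_x\psi$, $D^2_x\psi$ using the quotient rule. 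These expansions reproduce exactly the identities displayed in the excerpt between \eqref{phi-multiply} and \eqref{HRU-def} — namely $\phi_{R,0}D_xU = D_x\widehat U - U D_x\phi_{R,0}$ and its second-order analogue — but now read in the direction $D_xU\mapsto D_x\psi$ with $\varphi$ in place of $\widehat U$ and $\psi$ in place of $U$ (and at the point $\psi(t_0,\rho_0,x_0)=U(t_0,\rho_0,x_0)$ since $\varphi=\widehat U$ there). Multiplying the resulting inequality through by $\phi_{R,0}(\rho_0,x_0)>0$ (which preserves the inequality sign) and invoking the key cancellation \eqref{technical-eq}, which holds precisely because $D_x\mathcal H_0\in T_\rho\mathcal P(G)$ by \eqref{assu4-eq}, collapses the expression into $\partial_t\varphi + H^R_U(t_0,\rho_0,x_0,\partial_\rho\varphi,D_x\varphi,D^2_x\varphi)\ge 0$, which is \eqref{viscosity-sub-local}. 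The supersolution case is identical with all inequalities reversed and "maximum" replaced by "minimum". The terminal condition $U(T,\cdot)\le h(\cdot)$ (resp. $\ge$) transfers to $\widehat U$ since $\phi_{R,0}(T,\cdot)\ge 0$ and, more to the point, Definition \ref{def1} phrases the terminal condition for the truncated equation still in terms of $U$, not $\widehat U$, so there is nothing to prove there.

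I expect the main obstacle to be bookkeeping rather than conceptual: making sure that the quotient-rule expansion of $D^2_x\psi$ matches term-for-term the second-order identity in the excerpt, and that the $\sup_{\mathbb V\in B_\ell}$ term transforms correctly — specifically, that $\phi_{R,0}\widehat F(t,\rho,x,D_x\psi)$ with $D_x\psi = (D_x\varphi - \psi D_x\phi_{R,0})/\phi_{R,0}$ and $\psi=U$ at the point becomes exactly $\mathbb F^R(t,\rho,x, D_x\varphi - U D_x\phi_{R,0})$ as defined in \eqref{FR-def}, using the positive homogeneity of the substitution $\mathbb V\mapsto\langle\cdot,\mathbb V\rangle$ inside the supremum and the fact that multiplying by the positive scalar $\phi_{R,0}$ commutes with the supremum. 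A secondary point requiring care is the regularity of $\psi$: one must confirm $\varphi/\phi_{R,0}\in\mathcal C^{1,1,2}$, which follows from $\phi_R\in\mathcal C^\infty$, $\mathcal H_0\in\mathcal C^1(\mathcal P^\circ(G)\times\mathbb R^n)$ with the additional $x$-second-order regularity implicit in the standing assumptions (the appearance of $D^2_x\mathcal H_0$ in Assumption \ref{ass_domi} and condition \eqref{bound_prop} presupposes $\mathcal H_0$ is $C^2$ in $x$), and the strict positivity of $\phi_{R,0}$ on the relevant neighborhood. Once these routine verifications are in place the argument is a short computation, so I would present it as: (1) reduce to a neighborhood where $\phi_{R,0}>0$; (2) define $\psi=\varphi/\phi_{R,0}$ and check it is an admissible test function for $U$ with the correct extremum; (3) write out the derivative identities; (4) substitute into \eqref{viscosity-sub}, multiply by $\phi_{R,0}$, apply \eqref{technical-eq}, and read off \eqref{viscosity-sub-local}; (5) repeat verbatim for the supersolution.
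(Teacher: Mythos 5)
Your proposal is correct and follows essentially the same route as the paper: transfer the test function by dividing by $\phi_{R,0}$ (the paper sets $\varphi=\hat\varphi/\phi_{R,0}$, your $\psi$), use positivity of $\phi_{R,0}$ together with the normalization $\widehat U=\varphi$ at the extremum to carry the local maximum over to $U-\psi$, apply the viscosity inequality for $U$, and then run the algebraic identities of \eqref{phi-multiply}--\eqref{HRU-def} and the cancellation \eqref{technical-eq} with the test function in place of $U$, finally using $U(t_0,\rho_0,x_0)=\psi(t_0,\rho_0,x_0)$ to replace $H^R_{\psi}$ by $H^R_U$. No substantive differences.
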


In general, the viscosity solution of an HJB equation is not necessarily differentiable. To construct a sequence of differentiable approximations, we next introduce the semiconvex and semiconcave approximations of $\widehat U.$  

For fixed $N>0,$ let $B_N:=\{x\in\mathbb R^n:\|x\|\leq N\}$. For $R>1,$ define the compact set \begin{align}\label{malcalA-def}
\mathcal A_{R,N}:=\{(t,\rho,x)\in[0,T]\times\mathcal P^{\circ}(G)\times\mathbb R^n: \mathcal H_0(\rho,x)\leq 2R, \|x\|\leq N\}.
\end{align}  
For 
$N,R>1, \vartheta\in(0,1),$ and $(t,\rho,x)\in \mathcal A_{R,N},$ define 
\begin{align}\label{Uvartheta}
\widehat U^{\vartheta}(t,\rho,x):=\sup_{(s,\mu,y)\in \mathcal A_{R,N} }\Big\{\widehat U(s,\mu,y)-\frac{|t-s|^2+\|\rho-\mu\|^2+\|x-y\|^2}{2\vartheta}\Big\}. 
\end{align}
\begin{lemma} \label{uniform-converge}
Let $U\in\mathcal C(\mathcal A_{R,N}). $ 
 For fixed $\vartheta\in(0,1),$ $\widehat U^{\vartheta}$ is semiconvex, i.e., there exists a constant $K>0$, 
such that $(t,\rho,x)\mapsto \widehat U^{\vartheta}(t,\rho,x)+K(|t|^2+\|\rho\|^2+\|x\|^2)$  is convex. Moreover, $\{\widehat U^{\vartheta}\}_{\vartheta}$ is the semiconvex approximation of $\widehat U$, i.e., $\widehat U^{\vartheta}$ converges to $\widehat U$ uniformly on $\mathcal A_{R,N}$ as $\vartheta\to0.$
\end{lemma}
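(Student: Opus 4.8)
The plan is to recognize $\widehat U^{\vartheta}$ as a standard sup-convolution (Moreau--Yosida regularization) of $\widehat U$ over the compact set $\mathcal A_{R,N}$, so that both assertions reduce to classical properties of such regularizations. As a preliminary step I would verify that $\mathcal A_{R,N}$ defined in \eqref{malcalA-def} is compact: by property ($\mathrm{Pr}$-\romannumeral3) in Definition \ref{def-functions} (\romannumeral1), any sequence lying in $\mathcal A_{R,N}$ must stay a positive distance away from $\partial\mathcal P(G)$, since otherwise $\mathcal H_0\to+\infty$ along it, contradicting $\mathcal H_0\le 2R$; hence limit points belong to $\mathcal P^{\circ}(G)\times\mathbb R^n$, and together with the closed and bounded constraints $t\in[0,T]$, $\|x\|\le N$ and $\rho\in\mathcal P(G)$, this yields compactness. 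Consequently $\widehat U=U\phi_{R,0}$ is continuous on $\mathcal A_{R,N}$, hence bounded, say $M:=\sup_{\mathcal A_{R,N}}|\widehat U|<\infty$, and uniformly continuous with some modulus of continuity $\omega$. Writing $z=(t,\rho,x)$ and $w=(s,\mu,y)$ as vectors in $\mathbb R^{1+2n}$, the definition \eqref{Uvartheta} reads $\widehat U^{\vartheta}(z)=\sup_{w\in\mathcal A_{R,N}}\big\{\widehat U(w)-\tfrac{1}{2\vartheta}\|z-w\|^2\big\}$, and by compactness the supremum is finite and attained at some $w_{\vartheta}(z)\in\mathcal A_{R,N}$.

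For the semiconvexity I would expand $\|z-w\|^2=\|z\|^2-2\langle z,w\rangle+\|w\|^2$ to obtain
\begin{align*}
\widehat U^{\vartheta}(z)+\tfrac{1}{2\vartheta}\|z\|^2=\sup_{w\in\mathcal A_{R,N}}\Big\{\widehat U(w)-\tfrac{1}{2\vartheta}\|w\|^2+\tfrac{1}{\vartheta}\langle z,w\rangle\Big\},
\end{align*}
which is a supremum of affine functions of $z$, and hence a (finite, since $\mathcal A_{R,N}$ is compact) convex function of $z\in\mathbb R^{1+2n}$. Taking $K=\tfrac{1}{2\vartheta}>0$ then gives exactly the semiconvexity claim, with the understanding that $\widehat U^{\vartheta}+K(|t|^2+\|\rho\|^2+\|x\|^2)$ is the restriction to $\mathcal A_{R,N}$ of a convex function on $\mathbb R^{1+2n}$.

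For the uniform approximation, the lower bound $\widehat U^{\vartheta}(z)\ge\widehat U(z)$ is immediate by choosing $w=z$ in the supremum, since the penalty term then vanishes. For the matching upper bound I would use the maximizer $w_{\vartheta}(z)$: from $\widehat U(z)\le\widehat U^{\vartheta}(z)=\widehat U(w_{\vartheta}(z))-\tfrac{1}{2\vartheta}\|z-w_{\vartheta}(z)\|^2$ we get $\tfrac{1}{2\vartheta}\|z-w_{\vartheta}(z)\|^2\le\widehat U(w_{\vartheta}(z))-\widehat U(z)\le 2M$, hence $\|z-w_{\vartheta}(z)\|\le 2\sqrt{M\vartheta}$. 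Substituting back and invoking uniform continuity,
\begin{align*}
0\le\widehat U^{\vartheta}(z)-\widehat U(z)\le\widehat U(w_{\vartheta}(z))-\widehat U(z)\le\omega\big(\|z-w_{\vartheta}(z)\|\big)\le\omega\big(2\sqrt{M\vartheta}\big),
\end{align*}
and the right-hand side tends to $0$ as $\vartheta\to0$ uniformly in $z\in\mathcal A_{R,N}$, which is precisely the uniform convergence of $\widehat U^{\vartheta}$ to $\widehat U$ on $\mathcal A_{R,N}$.

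This argument is essentially textbook once the geometric setup is in place, so I do not anticipate a genuine obstacle; the only point that truly uses the structure of the problem is the compactness of $\mathcal A_{R,N}$, which rests on the coercivity property ($\mathrm{Pr}$-\romannumeral3) of $\mathcal H_0$ that keeps the density coordinate $\rho$ inside the open simplex $\mathcal P^{\circ}(G)$ on energy sublevel sets --- without it one would lose both the uniform continuity of $\widehat U$ and the attainment of the suprema, and the quantitative bound $\|z-w_{\vartheta}(z)\|\le 2\sqrt{M\vartheta}$ would no longer close the argument.
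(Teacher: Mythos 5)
Your proposal is correct and follows essentially the same route as the paper: semiconvexity via rewriting the sup-convolution as a supremum of affine functions of $(t,\rho,x)$ after expanding the square, and uniform convergence via the maximizer bound $\|z-w_{\vartheta}(z)\|\le C\sqrt{\vartheta}$ combined with uniform continuity of $\widehat U$ on the compact set $\mathcal A_{R,N}$. The only addition is your explicit verification of the compactness of $\mathcal A_{R,N}$ via property ($\mathrm{Pr}$-iii), which the paper leaves implicit.
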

\begin{proof}
We first show that $\widehat U^{\vartheta}$ is semiconvex. By the definition of $\widehat U^{\vartheta}$ \eqref{Uvartheta}, we have  
\begin{align*}
&(t,\rho,x)\to \widehat U^{\vartheta}(t,\rho,x)+\frac{1}{2\vartheta}(|t|^2+\|\rho\|^2+\|x\|^2)\\
&=\sup_{(s,\mu,y)\in\mathcal A_{R,N}}\Big\{\widehat U(s,\mu,y)-\frac{|t-s|^2+\|\rho-\mu\|^2+\|x-y\|^2}{2\vartheta}+\frac{1}{2\vartheta}(|t|^2+\|\rho\|^2+\|x\|^2)\Big\}\\
&=\sup_{(s,\mu,y)\in\mathcal A_{R,N} }\Big\{\widehat U(s,\mu,y)-\frac{|s|^2+\|\mu\|^2+\|y\|^2}{2\vartheta}+\frac{ts+\rho^{\top}\mu+x^{\top}y}{\vartheta}\Big\}. 
\end{align*} 
Since this is the supremum of a family of linear functions in $(t,\rho,x)$, it follows that the map 
 $(t,\rho,x)\mapsto \widehat U^{\vartheta}(t,\rho,x)+\frac{1}{2\vartheta}(|t|^2+\|\rho\|^2+\|x\|^2)$ is convex, which means that $\widehat U^{\vartheta}$ is semiconvex.  

 For each fixed $(t,\rho,x)\in\mathcal A_{R,N},$ denote by $(t^{\vartheta},\rho^{\vartheta},x^{\vartheta})$  the maximizer of \eqref{Uvartheta},  i.e.,  
\begin{align}\label{ine23}\widehat U^{\vartheta}(t,\rho,x)=\widehat U( t^{\vartheta},\rho^{\vartheta},x^{\vartheta})-\frac{|t-t^{\vartheta}|^2+\|\rho-\rho^{\vartheta}\|^2+\|x-x^{\vartheta}\|^2}{2\vartheta}\ge \widehat U(t,\rho,x)
\end{align} for the fixed $\vartheta\in(0,1)$. Then we have 
\begin{align}\label{limvartheta}|t-t^{\vartheta}|^2+\|\rho-\rho^{\vartheta}\|^2+\|x-x^{\vartheta}\|^2\leq 
4\vartheta \sup_{\mathcal A_{R,N}}|\widehat U(t,\rho,x)|.\end{align}
  Moreover, from \eqref{ine23} we also obtain 
 \begin{align*}
 0\leq \widehat U^{\vartheta}(t,\rho,x)-\widehat U(t,\rho,x)\leq \widehat U(t^{\vartheta},\rho^{\vartheta},x^{\vartheta})-\widehat U(t,\rho,x),
 \end{align*}
 which implies that $\lim_{\vartheta\to0}\widehat U^{\vartheta}(t,\rho,x)=\widehat U(t,\rho,x)$ uniformly for $(t,\rho,x)\in\mathcal A_{R,N}$ due to the uniform continuity of $\widehat U$ on $\mathcal A_{R,N}.$
 \end{proof}
Similarly, we have the semiconcave approximation denoted by $\widehat U_{\vartheta}:$ \begin{align*}
\widehat U_{\vartheta}(t,\rho,x):=\inf_{(s,\mu,y)\in\mathcal A_{R,N}}\Big\{\widehat U(s,\mu,y)+\frac{|t-s|^2+\|\rho-\mu\|^2+\|x-y\|^2}{2\vartheta}\Big\},
\end{align*}
i.e., there exists a constant $K>0$, 
such that $(t,\rho,x)\mapsto \widehat U_{\vartheta}(t,\rho,x)-K(|t|^2+\|\rho\|^2+\|x\|^2)$  is concave. Similar to Lemma \ref{uniform-converge}, we can also show that $\widehat U_{\vartheta}$ converges to $\widehat U$ uniformly on $\mathcal A_{R,N}.$

Typically, the semiconvex approximation $\widehat U^{\vartheta}$ is not a viscosity subsolution of  the original truncated HJB equation \eqref{HJB_R}, where the Hamiltonian  is $H^R_U$. In order to investigate the underlying HJB equation for $\widehat U^{\vartheta}$, we introduce the following auxiliary  functional 
\begin{align}\label{Hvartheta}
(H^R_{ U})^{\vartheta}(t,\rho,x,p,q,Q):=\sup_{(s,\mu,y)\in \mathcal A_{R,N}}\{H^R_{ U}(s,\mu,y,p,q,Q): |t-s|+\|\rho-\mu\|+\|x-y\|\leq \sqrt{\vartheta}\}.
\end{align}
From \eqref{Hvartheta}, the definition of $H^R_U$ \eqref{HRU-def}, and the  continuity of $H^R_U,$ 
we deduce that \begin{align}\label{appro-Hvartheta}\lim_{\vartheta\to0}(H^R_U)^{\vartheta}(t,\rho,x,p,q,Q)=H^R_U(t,\rho,x,p,q,Q)\text{ uniformly for }(t,\rho,x,p,q,Q) 
\end{align}  when
$(t,\rho,x)\in\mathcal A_{R,N}$ (see \eqref{malcalA-def} for the definition) and $(p,q,Q)$ in a compact set of $\mathbb R^n\times\mathbb R^n\times\mathcal S^{n\times n}$.  

Next, we show that the semiconvex approximation $\widehat U^{\vartheta}$ is the viscosity subsolution for the HJB equation with the Hamiltonian $(H^R_U)^{\vartheta}$ \eqref{Hvartheta}, but restricted to a shrunken domain of the original domain \eqref{malcalA-def}.  
For a small parameter $\gamma\ll1,$ we denote the $\gamma$-shrunken set by 
\begin{align}\label{shrunken}(\mathcal A_{R,N})_{\gamma}:=\{v\in \mathcal A_{R,N}:\mathrm{dist}(v,\partial\mathcal A_{R,N})>\gamma\}. 
\end{align}

\begin{lemma}\label{lem1}
Let $\widehat U=U\phi_{R,0}$ be a viscosity subsolution of \eqref{HJB_R} in $\mathcal A_R$. For each fixed $\vartheta\in(0,1),$ and each $\varphi\in\mathcal C^{1,1,2},$ if $\widehat  U^{\vartheta}-\varphi$ attains local maximum at point  $(t,\rho,x)\in (\mathcal A_{R,N})_{\sqrt{\vartheta}k_0},$ then 
\begin{align}\label{HJB1}
\frac{\partial\varphi}{\partial t}(t,\rho,x) +(H^R_{U})^{\vartheta}(t,\rho,x,\partial_{\rho}\varphi(t,\rho,x),D_x\varphi(t,\rho,x),D^2_x \varphi(t,\rho,x))\ge 0.
\end{align} 
Here, $k_0:=(1+4\sup\limits_{(t,\rho,x)\in \mathcal A_{R,N}}|\widehat U(t,\rho,x)|)^{\frac12}.$ 
\end{lemma}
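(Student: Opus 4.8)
The statement is the graph/compactified version of the classical fact that the sup-convolution of a viscosity subsolution is again a subsolution, for a slightly relaxed Hamiltonian; the proof is by sliding the test function from the evaluation point to the maximizer in \eqref{Uvartheta}. Fix $\vartheta\in(0,1)$, $\varphi\in\mathcal C^{1,1,2}$, and let $\widehat U^{\vartheta}-\varphi$ attain a local maximum at $(t,\rho,x)\in(\mathcal A_{R,N})_{\sqrt\vartheta k_0}$. Since $\widehat U=U\phi_{R,0}$ is continuous and $\mathcal A_{R,N}$ compact, the supremum defining $\widehat U^{\vartheta}(t,\rho,x)$ is attained at some $(\bar t,\bar\rho,\bar x)\in\mathcal A_{R,N}$. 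By the optimality estimate \eqref{limvartheta}, $|t-\bar t|^2+\|\rho-\bar\rho\|^2+\|x-\bar x\|^2\leq 4\vartheta\sup_{\mathcal A_{R,N}}|\widehat U|<\vartheta k_0^2$, so $(\bar t,\bar\rho,\bar x)$ lies within distance $\sqrt\vartheta k_0$ of $(t,\rho,x)$; as $(t,\rho,x)$ is at distance $>\sqrt\vartheta k_0$ from $\partial\mathcal A_{R,N}$, the maximizer lies in the interior of $\mathcal A_{R,N}$, hence $\bar t\in(0,T)$, $\bar\rho\in\mathcal P^{\circ}(G)$ and $\mathcal H_0(\bar\rho,\bar x)<2R$, i.e. $(\bar t,\bar\rho,\bar x)\in(0,T)\times\mathcal A_R$. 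This is precisely where the assumed subsolution property of $\widehat U$ for \eqref{HJB_R} is available, and it pins down the choice of $k_0$.

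\textbf{Sliding the test function.} Set the shift $(a,b,c):=(t-\bar t,\rho-\bar\rho,x-\bar x)$, and note $\sum_i b_i=0$ since $\rho,\bar\rho\in\mathcal P^{\circ}(G)$. For $(t',\rho',x')$ in a small neighbourhood of $(t,\rho,x)$, the translated point $z':=(t'-a,\rho'-b,x'-c)$ remains in $\mathcal A_{R,N}$ (because $(\bar t,\bar\rho,\bar x)$ is interior) and is an admissible competitor in \eqref{Uvartheta}, so $\widehat U^{\vartheta}(t',\rho',x')\geq \widehat U(z')-\tfrac{C_0}{2\vartheta}$ with $C_0:=|a|^2+\|b\|^2+\|c\|^2$ a constant independent of $(t',\rho',x')$, and equality holds at $(t',\rho',x')=(t,\rho,x)$ by the choice of the maximizer. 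Combining with the local maximality of $\widehat U^{\vartheta}-\varphi$ and cancelling $C_0/(2\vartheta)$ yields $\widehat U(z')-\psi(z')\leq \widehat U(\bar t,\bar\rho,\bar x)-\psi(\bar t,\bar\rho,\bar x)$ for $z'$ near $(\bar t,\bar\rho,\bar x)$, where $\psi(\cdot):=\varphi(\,\cdot+(a,b,c)\,)\in\mathcal C^{1,1,2}$. Subtracting the constant $\psi(\bar t,\bar\rho,\bar x)-\widehat U(\bar t,\bar\rho,\bar x)$ from $\psi$ produces a test function $\widetilde\psi\in\mathcal C^{1,1,2}$ with $\widehat U-\widetilde\psi$ having a local maximum at $(\bar t,\bar\rho,\bar x)$ and $\widehat U(\bar t,\bar\rho,\bar x)=\widetilde\psi(\bar t,\bar\rho,\bar x)$. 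Definition \ref{def1}(i) applied to $\widehat U$ at $(\bar t,\bar\rho,\bar x)$ then gives $\tfrac{\partial\widetilde\psi}{\partial t}(\bar t,\bar\rho,\bar x)+H^R_U\big(\bar t,\bar\rho,\bar x,\partial_{\rho}\widetilde\psi,D_x\widetilde\psi,D^2_x\widetilde\psi\big)\big|_{(\bar t,\bar\rho,\bar x)}\geq 0$.

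\textbf{Transferring derivatives and concluding.} Translation invariance immediately gives $\tfrac{\partial\widetilde\psi}{\partial t}(\bar t,\bar\rho,\bar x)=\tfrac{\partial\varphi}{\partial t}(t,\rho,x)$, $D_x\widetilde\psi(\bar t,\bar\rho,\bar x)=D_x\varphi(t,\rho,x)$ and $D^2_x\widetilde\psi(\bar t,\bar\rho,\bar x)=D^2_x\varphi(t,\rho,x)$; for the graph Fr\'echet derivative one uses the representation \eqref{Fre2}, which expresses $\partial_\rho$ as the Euclidean $\rho$-gradient projected onto $\{\sum_i p_i=0\}$, an operation that commutes with translations, whence $\partial_{\rho}\widetilde\psi(\bar t,\bar\rho,\bar x)=\partial_{\rho}\varphi(t,\rho,x)$ (equivalently, insert $(1-s)\bar\rho+s\mu+b=(1-s)\rho+s(\mu+b)$ into the defining limit \eqref{Frechet} using $\sum_i b_i=0$). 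Therefore $\tfrac{\partial\varphi}{\partial t}(t,\rho,x)+H^R_U\big(\bar t,\bar\rho,\bar x,\partial_{\rho}\varphi,D_x\varphi,D^2_x\varphi\big)\big|_{(t,\rho,x)}\geq 0$. Finally, since $(\bar t,\bar\rho,\bar x)\in\mathcal A_{R,N}$ and, by \eqref{limvartheta}, differs from $(t,\rho,x)$ by at most $\sqrt\vartheta k_0$ in each coordinate, it is one of the base points over which the supremum in \eqref{Hvartheta} is taken, so $H^R_U(\bar t,\bar\rho,\bar x,\cdots)\leq (H^R_U)^{\vartheta}(t,\rho,x,\cdots)$, which yields \eqref{HJB1}. (Note we never need $\widehat U^\vartheta(t,\rho,x)=\varphi(t,\rho,x)$, since the subsolution inequality is unaffected by adding constants to $\varphi$; the touching normalization is imposed only on $\widetilde\psi$ when invoking Definition \ref{def1}.)

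\textbf{Main obstacle.} Two points require care. First, one must guarantee that the maximizer $(\bar t,\bar\rho,\bar x)$ does not escape to $\partial\mathcal A_{R,N}$, where $\widehat U$ is not known to be a subsolution; this is exactly what the $\sqrt\vartheta k_0$-shrinking buys, with $k_0=(1+4\sup_{\mathcal A_{R,N}}|\widehat U|)^{1/2}$ calibrated so that \eqref{limvartheta} bounds the displacement by $\sqrt\vartheta k_0$. Second, the translation invariance of the Wasserstein/Fr\'echet derivative $\partial_{\rho}$ on the probability simplex is not automatic; it hinges on the shift $b=\rho-\bar\rho$ being tangential ($\sum_i b_i=0$), so that the affine recombination stays within $\mathcal P(G)$-directions. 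Everything else — attainment of the sup-convolution, $\mathcal C^{1,1,2}$-smoothness of the translated test function, and the reduction to $(H^R_U)^{\vartheta}$ — is routine bookkeeping.
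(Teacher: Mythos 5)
Your proof is correct and follows essentially the same route as the paper: locate the maximizer of the sup-convolution, use the $\sqrt{\vartheta}k_0$-shrinking together with \eqref{limvartheta} to keep it in the interior of $\mathcal A_{R,N}$, translate the test function so that $\widehat U$ minus the translated test function is maximized there, invoke the subsolution property of $\widehat U$ for \eqref{HJB_R}, and absorb the base-point shift via the definition of $(H^R_U)^{\vartheta}$. Your explicit normalization of $\widetilde\psi$ to meet the touching condition of Definition \ref{def1} and your justification that $\partial_\rho$ commutes with tangential shifts are careful additions; the only blemish (shared with the paper) is that \eqref{limvartheta} bounds the displacement in the summed metric only by a constant multiple of $\sqrt{\vartheta}k_0$ rather than the $\sqrt{\vartheta}$ appearing in \eqref{Hvartheta}, a harmless constant mismatch.
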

\begin{proof}
The main idea is to show that if $\widehat U^{\vartheta} - \varphi$ attains a local maximum at some point, then $\widehat U - \varphi$ attains a local maximum at a nearby point. This allows us to apply the definition of a viscosity solution for $\widehat U$ to establish \eqref{HJB1}. 
For fixed $\vartheta\in(0,1)$ and $\varphi\in\mathcal C^{1,1,2}$, suppose that $\widehat U^{\vartheta}-\varphi$ attains a  local maximum 
at $(t,\rho,x)\in (\mathcal A_{R,N})_{\sqrt{\vartheta}k_0}.$ We denote by $(t^{\vartheta},\rho^{\vartheta},x^{\vartheta})$ the maximizer point of \eqref{Uvartheta}, i.e., \begin{align}\label{max-Uvartheta}\widehat U^{\vartheta}(t,\rho,x)=\widehat U(t^{\vartheta},\rho^{\vartheta},x^{\vartheta})-\frac{|t-t^{\vartheta}|^2+\|\rho-\rho^{\vartheta}\|^2+\|x-x^{\vartheta}\|^2}{2\vartheta}.\end{align}  From \eqref{limvartheta} we derive 
\begin{align*}
|t-t^{\vartheta}|^2+\|\rho-\rho^{\vartheta}\|^2+\|x-x^{\vartheta}\|^2\leq 
\vartheta k_0^2.
\end{align*} 
Therefore for every $(s,\mu,y)\in(\mathcal A_{R,N})_{\sqrt{\vartheta}k_0}$  sufficiently 
near $(t,\rho,x)$, the point $(s-t+t^{\vartheta},\mu-\rho+\rho^{\vartheta},y-x+x^{\vartheta})\in \mathcal A_{R,N}.$ 
Then for such $(s,\mu,y),$ we have 
\begin{align*}
&\quad \widehat U(t^{\vartheta},\rho^{\vartheta}, x^{\vartheta})-\varphi( t,\rho, x)\\
&=\Big(\widehat U^{\vartheta}(t,\rho,x)+\frac{1}{2\vartheta}(|t-t^{\vartheta}|^2+\|\rho-\rho^{\vartheta}\|^2+\|x-x^{\vartheta}\|^2)\Big)-\varphi(t,\rho,x)\\
&\ge \widehat U^{\vartheta}(s,\mu,y) -\varphi(s,\mu,y)+ \frac{1}{2\vartheta}(|t-t^{\vartheta}|^2+\|\rho-\rho^{\vartheta}\|^2+\|x-x^{\vartheta}\|^2)\\
&=\sup_{(r,\nu,z)\in\mathcal A_{R,N}}\Big\{\widehat U(r,\nu,z)-\frac{|s-r|^2+\|\nu-\mu\|^2+\|y-z\|^2}{2\vartheta}\Big\}\\
&\quad +\frac{1}{2\vartheta}(|t-t^{\vartheta}|^2+\|\rho-\rho^{\vartheta}\|^2+\|x-x^{\vartheta}\|^2)-\varphi(s,\mu,y)\\
&\ge \widehat U(s-t+t^{\vartheta},\mu-\rho+\rho^{\vartheta},y-x+x^{\vartheta})
-\varphi(s,\mu,y),
\end{align*} 
where for the last step, we take $r= s-t+t^{\vartheta},\nu=\mu-\rho+\rho^{\vartheta},z= y-x+x^{\vartheta} $ so that  $-\frac{|s-r|^2+\|\nu-\mu\|^2+\|y-z\|^2}{2\vartheta}+\frac{1}{2\vartheta}(|t-t^{\vartheta}|^2+\|\rho-\rho^{\vartheta}\|^2+\|x-x^{\vartheta}\|^2)=0$. 
Denote $\tau=s-t+t^{\vartheta},\eta=\mu-\rho+\rho^{\vartheta},\xi=y-x+x^{\vartheta}.$ 
 We arrive at
\begin{align*}
&\widehat U(t^{\vartheta},\rho^{\vartheta}, x^{\vartheta})-\varphi( t,\rho, x)
\\
&\ge \widehat U(\tau, \eta, \xi)-\varphi(\tau+t-t^{\vartheta},\eta+\rho-\rho^{\vartheta},\xi+x-x^{\vartheta}). 
\end{align*}
This implies that the mapping $(\tau,\eta,\xi)\mapsto  \widehat U(\tau, \eta, \xi)-\varphi(\tau+t-t^{\vartheta},\eta+\rho-\rho^{\vartheta},\xi+x-x^{\vartheta})$
has a local maximizer at $(t^{\vartheta},\rho^{\vartheta},x^{\vartheta})$. By the definition of the viscosity subsolution \eqref{viscosity-sub-local}, we derive 
\begin{align*}
\frac{\partial\varphi}{\partial t}(t,\rho,x)+H^R_{U}(t^{\vartheta},\rho^{\vartheta},x^{\vartheta},\partial_{\rho}\varphi(t,\rho,x),D_x\varphi(t,\rho,x),D^2_x\varphi(t,\rho,x))\ge 0,
\end{align*} 
where we also use the fact that \begin{align*}&(\frac{\partial\varphi}{\partial\tau},\partial_{\eta}\varphi,D_{\xi}\varphi,D^2_{\xi}\varphi)(\tau+t-t^{\vartheta},\eta+\rho-\rho^{\vartheta},\xi+x-x^{\vartheta})|_{(\tau,\eta,\xi)=(t^{\vartheta},\rho^{\vartheta},x^{\vartheta})}\\&=(\frac{\partial\varphi}{\partial t},\partial_{\rho}\varphi,D_x\varphi,D^2_x\varphi)(t,\rho,x).
\end{align*} 
By the definition of $(H^R_U)^{\vartheta}$ in \eqref{Hvartheta}, we derive \eqref{HJB1}.  
The proof is finished. 
\end{proof}
 
Similarly, define  
\begin{align}\label{Hvartheta2}
(H^R_U)_{\vartheta}(t,\rho,x,p,q,Q):=\inf_{(s,\mu,y)\in\mathcal A_{R,N}}\{H^R_U(s,\mu,y,p,q,Q): |t-s|+\|\rho-\mu\|+\|x-y\|\leq \sqrt{\vartheta}\}.
\end{align} 
We have 
\begin{align}\label{appro-Hvartheta2}\lim_{\vartheta\to0}(H^R_U)_{\vartheta}(t,\rho,x,p,q,Q)=H^R_U(t,\rho,x,p,q,Q)\text{ uniformly for }(t,\rho,x,p,q,Q) 
\end{align} when
$(t,\rho,x)\in\mathcal A_{R,N}$ and $(p,q,Q)$ in a compact set of $\mathbb R^n\times\mathbb R^n\times\mathcal S^{n\times n}.$ Similar to Lemma \ref{lem1}, one can verify that the semiconcave approximation $\widehat U_{\vartheta}$ is the viscosity supersolution for the HJB equation with the Hamiltonian $(H^R_U)_{\vartheta}$ \eqref{Hvartheta} in a shrunken domain.  

\begin{lemma}
Let $\widehat U=U\phi_{R,0}$ be a viscosity supersolution of \eqref{HJB_R}  in $\mathcal A_R$.  
For each fixed $\vartheta\in(0,1),$ and each $\varphi\in\mathcal C^{1,1,2},$ if $\widehat U_{\vartheta}-\varphi$ attains local minimum at point $(t,\rho,x)\in (\mathcal A_{R,N})_{\sqrt{\vartheta}k_0},$   then 
\begin{align}\label{HJB2}
\frac{\partial\varphi}{\partial t}(t,\rho,x)+(H^R_{U})_{\vartheta}(t,\rho,x,\partial_{\rho}\varphi(t,\rho,x) ,D_x\varphi(t,\rho,x),D^2_x\varphi(t,\rho,x))\leq 0.
\end{align}
\end{lemma}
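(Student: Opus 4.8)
The plan is to mirror the proof of Lemma~\ref{lem1} almost verbatim, with every inequality reversed, exploiting the inf-convolution structure of $\widehat U_\vartheta$ in place of the sup-convolution structure of $\widehat U^\vartheta$. Fix $\vartheta\in(0,1)$ and $\varphi\in\mathcal C^{1,1,2}$, suppose $\widehat U_\vartheta-\varphi$ has a local minimum at $(t,\rho,x)\in(\mathcal A_{R,N})_{\sqrt\vartheta k_0}$, and let $(t_\vartheta,\rho_\vartheta,x_\vartheta)\in\mathcal A_{R,N}$ be a minimizer realizing the infimum in the definition of $\widehat U_\vartheta(t,\rho,x)$, so $\widehat U_\vartheta(t,\rho,x)=\widehat U(t_\vartheta,\rho_\vartheta,x_\vartheta)+\tfrac{1}{2\vartheta}(|t-t_\vartheta|^2+\|\rho-\rho_\vartheta\|^2+\|x-x_\vartheta\|^2)$. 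The first step is to record the inf-convolution analogue of \eqref{limvartheta}, namely $|t-t_\vartheta|^2+\|\rho-\rho_\vartheta\|^2+\|x-x_\vartheta\|^2\le\vartheta k_0^2$, which, because $(t,\rho,x)$ is $\sqrt\vartheta k_0$-interior to $\mathcal A_{R,N}$, guarantees that for $(s,\mu,y)\in(\mathcal A_{R,N})_{\sqrt\vartheta k_0}$ sufficiently near $(t,\rho,x)$ the shifted point $(s-t+t_\vartheta,\mu-\rho+\rho_\vartheta,y-x+x_\vartheta)$ still lies in $\mathcal A_{R,N}$.

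The second step is to show that $(\tau,\eta,\xi)\mapsto\widehat U(\tau,\eta,\xi)-\varphi(\tau+t-t_\vartheta,\eta+\rho-\rho_\vartheta,\xi+x-x_\vartheta)$ attains a local minimum at $(t_\vartheta,\rho_\vartheta,x_\vartheta)$. This follows from the same computation as in Lemma~\ref{lem1}, with $\sup$ and $\ge$ replaced throughout by $\inf$ and $\le$: starting from the local minimality of $\widehat U_\vartheta-\varphi$ at $(t,\rho,x)$, inserting the defining identity for $\widehat U_\vartheta(t,\rho,x)$, then writing $\widehat U_\vartheta(s,\mu,y)$ as an infimum and evaluating that infimum at the free variable $(r,\nu,z)=(s-t+t_\vartheta,\mu-\rho+\rho_\vartheta,y-x+x_\vartheta)$, for which the quadratic penalty cancels exactly, one arrives at $\widehat U(t_\vartheta,\rho_\vartheta,x_\vartheta)-\varphi(t,\rho,x)\le \widehat U(s-t+t_\vartheta,\mu-\rho+\rho_\vartheta,y-x+x_\vartheta)-\varphi(s,\mu,y)$, which is the asserted local minimality after the substitution $(\tau,\eta,\xi)=(s-t+t_\vartheta,\mu-\rho+\rho_\vartheta,y-x+x_\vartheta)$.

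The third step is to invoke the viscosity supersolution property of $\widehat U$. Since $\mathcal A_{R,N}\subset[0,T]\times\mathcal A_R$ (by \eqref{malcalA-def} and \eqref{def-A-phi}, noting that the $\sqrt\vartheta k_0$-interiority forces $(t_\vartheta,\rho_\vartheta,x_\vartheta)$ into the open level set), the definition \eqref{viscosity-super-local} applies with the test function $(\tau,\eta,\xi)\mapsto\varphi(\tau+t-t_\vartheta,\eta+\rho-\rho_\vartheta,\xi+x-x_\vartheta)$; by translation invariance of derivatives, its $t$-derivative, Fréchet derivative in $\rho$, and first and second Euclidean derivatives in $x$ at $(t_\vartheta,\rho_\vartheta,x_\vartheta)$ equal those of $\varphi$ at $(t,\rho,x)$, so
\begin{equation*}
\frac{\partial\varphi}{\partial t}(t,\rho,x)+H^R_U(t_\vartheta,\rho_\vartheta,x_\vartheta,\partial_\rho\varphi(t,\rho,x),D_x\varphi(t,\rho,x),D^2_x\varphi(t,\rho,x))\le 0.
\end{equation*}
Finally, since $(t_\vartheta,\rho_\vartheta,x_\vartheta)$ is admissible in the localized infimum \eqref{Hvartheta2} at $(t,\rho,x)$, replacing $H^R_U(t_\vartheta,\rho_\vartheta,x_\vartheta,\cdot)$ by $(H^R_U)_\vartheta(t,\rho,x,\cdot)$ only decreases the left-hand side, which is exactly \eqref{HJB2}.

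I do not expect a genuine obstacle here: the statement is the symmetric counterpart of Lemma~\ref{lem1}, and the only point requiring attention is the domain bookkeeping — checking that the shifted evaluation points stay inside $\mathcal A_{R,N}$ (hence inside $\mathcal A_R$, where the supersolution inequality for $\widehat U$ is available) and that $(t_\vartheta,\rho_\vartheta,x_\vartheta)$ satisfies the localization constraint defining $(H^R_U)_\vartheta$; both are consequences of the bound $|t-t_\vartheta|^2+\|\rho-\rho_\vartheta\|^2+\|x-x_\vartheta\|^2\le\vartheta k_0^2$ together with the $\sqrt\vartheta k_0$-interiority hypothesis on $(t,\rho,x)$.
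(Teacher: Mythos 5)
Your proof is correct and follows exactly the route the paper intends: the paper omits the proof of this lemma, stating only that it is "similar to Lemma \ref{lem1}", and your argument is precisely that mirror image — inf-convolution in place of sup-convolution, all inequalities reversed, the same cancellation of the quadratic penalty at the shifted point, and the same final passage from $H^R_U$ at $(t_\vartheta,\rho_\vartheta,x_\vartheta)$ to the localized infimum $(H^R_U)_\vartheta$ at $(t,\rho,x)$. The only loose ends (the touching normalization $\widehat U=\varphi$ at the minimizer, fixable by adding a constant to $\varphi$, and the constant $k_0$ versus the radius $\sqrt\vartheta$ in the localization) are present in the paper's own proof of Lemma \ref{lem1} as well, so your proposal matches the paper's treatment.
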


We end this subsection by showing regularity properties of $\mathbb F^R$ given in \eqref{FR-def}. 
 \begin{lemma}  There exist constants $C_1,C_2>0$ such that for all $(t,\rho,x)\in[0,T]\times\mathcal P^{\circ}(G)\times\mathbb R^n,$\begin{align}
 &|\mathbb F^R(t,\rho,x,p)-\mathbb F^R(t,\rho,x,q)|\leq \ell\|p-q\|,\quad p,q\in\mathbb R^n,\label{pq-regularity}
 \end{align}
 and that for any $r_1\in(0,\frac1n)$ and $r_2>0,$ when  
$\rho,\mu\in\mathcal P_{r_1}(G)$, and $x,y\in\mathbb R^n$ such that $\max\limits_{(i,j)\in E}(|x_i-x_j|\vee|y_i-y_j|)\leq r_2,$ 
\begin{align}&|\mathbb F^R(t,\rho,x,p)-\mathbb F^R(s,\mu,y,p)|\leq C_2 (R,r_1,r_2,\ell)(\|\rho-\mu\|+\|x-y\|+|t-s|),\label{xy-regularity}
 \end{align}
 where $t,s\in[0,T],p\in\mathbb R^n.$ 
 \end{lemma}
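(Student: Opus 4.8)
The plan is to reduce both estimates to the elementary fact that a supremum is $1$-Lipschitz in its arguments, i.e. $\bigl|\sup_a f(a)-\sup_a g(a)\bigr|\le\sup_a|f(a)-g(a)|$, applied to the parametrized family $\mathbb V\mapsto\langle p,\mathbb V\rangle-\phi_{R,0}(\rho,x)F(t,\rho,x,\mathbb V)$ over $\mathbb V\in B_\ell$, where $\phi_{R,0}(\rho,x)=\phi_R(\mathcal H_0(\rho,x))$ as in \eqref{def-A-phi}. For \eqref{pq-regularity}, I would fix $(t,\rho,x)$ and, for each $\mathbb V\in B_\ell$, add and subtract $\langle q,\mathbb V\rangle$ in the definition \eqref{FR-def}, estimating $\langle p-q,\mathbb V\rangle\le\|p-q\|\,\|\mathbb V\|\le\ell\|p-q\|$ since $\|\mathbb V\|\le\ell$. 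Taking the supremum over $\mathbb V$ gives $\mathbb F^R(t,\rho,x,p)\le\mathbb F^R(t,\rho,x,q)+\ell\|p-q\|$, and interchanging $p$ and $q$ yields \eqref{pq-regularity}.

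For \eqref{xy-regularity}, the sup-stability inequality gives
\[
|\mathbb F^R(t,\rho,x,p)-\mathbb F^R(s,\mu,y,p)|\le\sup_{\mathbb V\in B_\ell}\bigl|\phi_{R,0}(\rho,x)F(t,\rho,x,\mathbb V)-\phi_{R,0}(\mu,y)F(s,\mu,y,\mathbb V)\bigr|,
\]
and I would split the integrand as $\phi_{R,0}(\rho,x)\bigl[F(t,\rho,x,\mathbb V)-F(s,\mu,y,\mathbb V)\bigr]+\bigl[\phi_{R,0}(\rho,x)-\phi_{R,0}(\mu,y)\bigr]F(s,\mu,y,\mathbb V)$. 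The first term is controlled by $0\le\phi_{R,0}\le 1$ together with the global Lipschitz continuity of $F$ in $t$ (Assumption \ref{ass1}(iii)) and the local Lipschitz continuity of $F$ in $(\rho,x)$ (Assumption \ref{ass1}(ii)), the latter being applicable precisely because $\rho,\mu\in\mathcal P_{r_1}(G)$ and $\max_{(i,j)\in E}(|x_i-x_j|\vee|y_i-y_j|)\le r_2$; this gives a bound of the form $C(r_1,r_2)(\|\rho-\mu\|+\|x-y\|)+C|t-s|$. The second term is handled using the uniform boundedness of $F$ (a standing hypothesis of Theorem \ref{uniqueness}), the Lipschitz bound $|\phi_{R,0}(\rho,x)-\phi_{R,0}(\mu,y)|=|\phi_R(\mathcal H_0(\rho,x))-\phi_R(\mathcal H_0(\mu,y))|\le\frac{C}{R}|\mathcal H_0(\rho,x)-\mathcal H_0(\mu,y)|$ coming from \eqref{phi-property2}, and finally the local Lipschitz continuity of $\mathcal H_0$ from Assumption \ref{energy} (again valid on the same region, with $R_1=r_1$, $R_2=r_2$ in the sense of \eqref{locally-Lip}), which bounds $|\mathcal H_0(\rho,x)-\mathcal H_0(\mu,y)|$ by $C(r_1,r_2)(\|\rho-\mu\|+\|x-y\|)$. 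Collecting the two contributions yields \eqref{xy-regularity} with $C_2$ depending on $R,r_1,r_2,\ell$, the $\ell$-dependence entering only through the uniform sup-bound of $F$ over $B_\ell$.

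The computation is essentially routine, and the only point I would be careful about is bookkeeping the domain of validity: the region $\{\rho,\mu\in\mathcal P_{r_1}(G),\ \max_{(i,j)\in E}(|x_i-x_j|\vee|y_i-y_j|)\le r_2\}$ must be matched exactly to the domain on which Definition \ref{def-functions}(ii) supplies local Lipschitz constants for both $F$ and $\mathcal H_0$, so that all these constants depend only on $(r_1,r_2)$. A related subtlety is that one should invoke the uniform boundedness of $F$ rather than the polynomial growth bound \eqref{ass11}: since $\phi_{R,0}$ stays bounded below by $R^{-\beta}$, the cut-off cannot absorb a growing $F$ on the region in question, where $\|x\|$ is unbounded even though $\max_{(i,j)\in E}|x_i-x_j|\le r_2$ and hence $\mathcal H_0(\rho,x)$ need not be bounded.
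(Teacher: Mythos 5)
Your proof is correct and follows essentially the same route as the paper's: both arguments rest on the elementary stability of suprema (the paper phrases it via $\inf f_1+\sup f_2\le\sup(f_1+f_2)$, you via $|\sup f-\sup g|\le\sup|f-g|$, which is the same manipulation), and both use the identical splitting of $\phi_{R,0}(\rho,x)F(t,\rho,x,\mathbb V)-\phi_{R,0}(\mu,y)F(s,\mu,y,\mathbb V)$ into a $\phi$-difference term and an $F$-difference term, controlled by the local Lipschitz properties of $\mathcal H_0$ and $F$. Your explicit remark that one must invoke the uniform boundedness of $F$ (a standing hypothesis of Theorem \ref{uniqueness}) rather than the growth bound \eqref{ass11} to handle the $\phi$-difference term is a point the paper leaves implicit, and is correctly resolved.
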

 \begin{proof}
By the relation $\inf_{\mathbb V}f_1({\mathbb V})+\sup_{\mathbb V}f_2({\mathbb V})\leq \sup_{\mathbb V}(f_1({\mathbb V})+f_2({\mathbb V}))$ for two functions $f_1,f_2:B_{\ell}\to\mathbb R,$  we obtain from the definition of $\mathbb F^R$ \eqref{FR-def} that 
\begin{align*}
&\quad -\mathbb F^R(t,\rho,x,p)+\mathbb F^R(t,\rho,x,q)\\
&=\inf_{\mathbb V\in B_{\ell}}\{-\langle p,\mathbb V\rangle+\phi_{R,0}F(t,\rho,x,\mathbb V)\}+\sup_{\mathbb V\in B_{\ell}}\{\langle q,\mathbb V\rangle-\phi_{R,0}F(t,\rho,x,\mathbb V)\}\\
&\leq \sup_{\mathbb V\in B_{\ell}}\{-\langle p,\mathbb V\rangle +\phi_{R,0}F(t,\rho,x,\mathbb V)+\langle q,\mathbb V\rangle-\phi_{R,0}F(t,\rho,x,\mathbb V)\}\\&\leq \ell \|p-q\|,
\end{align*}
and similarly $\mathbb F^R(t,\rho,x,p)-\mathbb F^R(t,\rho,x,q)\leq \ell\|p-q\|$, which implies \eqref{pq-regularity}. 

 In addition, recalling the definition of $\phi_{R,0}=\phi_R(\mathcal H_0),$ by the locally Lipschitz property of $\mathcal H_0$ and $F,$ we derive 
\begin{align*}
&\quad -\mathbb F^R(t,\rho,x,p)+\mathbb F^R(s,\mu,y,p)\\&\leq \sup_{\mathbb V\in B_{\ell}}\{-\langle p,\mathbb V\rangle +\phi_{R,0}(\rho,x)F(t,\rho,x,\mathbb V)+\langle p,\mathbb V\rangle+\phi_{R,0}(\mu,y)F(s,\mu,y,\mathbb V)\}\\
&\leq \sup _{\mathbb V\in B_{\ell}}\{(\phi_{R,0}(\rho,x)-\phi_{R,0}(\mu,y))F(t,\rho,x,\mathbb V)+\phi_{R,0}(\mu,y)(F(t,\rho,x,\mathbb V)-F(s,\mu,y,\mathbb V))\}\\
&\leq C(R,r_1,r_2,\ell)(\|\rho-\mu\|+\|x-y\|+|t-s|),\end{align*}
when 
$\rho,\mu\in\mathcal P_{r_1}(G),\max_{(i,j)\in E}(|x_i-x_j|\vee|y_i-y_j|)\leq r_2.$ 
 Similarly, we can derive $$\mathbb F^R(t,\rho,x,p)-\mathbb F^R(s,\mu,y,p)\leq C (R,r_1,r_2,\ell)(\|\rho-\mu\|+\|x-y\|+|t-s|).$$ Thus \eqref{xy-regularity} is proved. 
\end{proof}

\subsection{Proof of uniqueness of viscosity solution}\label{sec_6-2}

In this subsection, we apply the truncation technique to prove the uniqueness of the viscosity solution to the HJB equation \eqref{HJB} in $(0,T]\times\mathcal P^{\circ}(G)\times\mathbb R^n$.  

\begin{proof}[Proof of Theorem \ref{uniqueness}]  The existence of the viscosity solution of the HJB equation \eqref{HJB} is guaranteed by Theorem \ref{thm_existence}, where the viscosity solution can be described by the value function \eqref{def-valuefunction} of the optimal control problem. Moreover, when costs $F,h$ are uniformly bounded    (i.e., $p_1=0$ in \eqref{ass11}), according to \eqref{growth-proof}, one can show that this viscosity solution is also bounded.

Now we prove the uniqueness of the bounded viscosity solution. Let $U,\widetilde U$ be two bounded viscosity solutions of \eqref{HJB} in $(0,T)\times\mathcal P^{\circ}(G)\times \mathbb R^n$. Suppose there exists a point $(\hat t,\hat\rho,\hat x)\in(0,T)\times\mathcal P^{\circ}(G)\times\mathbb R^n$ such that 
\begin{align}\label{hatpoint}\sigma_0:=(U-\widetilde U)(\hat t,\hat\rho,\hat x)>0.
\end{align} 
Let $R>\mathcal H_0(\hat\rho,\hat x).$  
  Then $U\phi_{R,0},\widetilde U\phi_{R,0}$ are viscosity solutions of \eqref{HJB_R} in $\mathcal A_R$ (see Lemma \ref{coro2_local}). Here, we recall definitions of $\mathcal A_R,\phi_{R,0}$ in \eqref{def-A-phi} and properties of $\phi_{R}$ in \eqref{phi-property} and \eqref{phi-property2}. 
Next, the proof is split into four steps, based on the doubling of variables method.  
  
  \textit{Step 1: Doubling of variables and properties of maximizer for an auxiliary function.} 
 
In this step, we first introduce an auxiliary function, and then show the existence and properties of its maximizer.
 For fixed $b_0\in(0,\frac12), \delta,\gamma,a,d\in(0,\frac12),$
define the auxiliary function 
\begin{align}\label{auxi-1}
&\Psi(t,s,\rho,\mu,x,y)=U(t,\rho,x)\phi_{R,0}(\rho,x)-\widetilde U(s,\mu,y)\phi_{R,0}( \mu,y)-\frac{(t-s)^2}{2\delta}\notag\\&\quad +\frac{b_0\sigma_0}{2T}(t+s)-\frac dt-\frac ds
-{\Big(a(\|x\|^2+\|y\|^2)+\frac{\|\rho-\mu\|^2+\|x-y\|^2}{2\gamma}\Big)}.\end{align}   
Note that when $a< 
\frac{b_0\sigma_0\hat t }{8T(\|\hat x\|^2\vee 0)}$  
and $d< \frac{b_0\sigma_0\hat t^2 }{8T}$ with $\hat t,\hat x$ given in \eqref{hatpoint}, it holds $\frac{b_0\sigma_0}{2T}\hat t-2a\|\hat x\|^2>\frac{b_0\sigma_0}{4T}\hat t>0$ and $ \frac{b_0\sigma_0}{2T}\hat t-\frac{2d}{\hat t}>\frac{b_0\sigma_0}{4T}\hat t>0$, and thus 
\begin{align}\label{larger0}
\Psi(\hat t,\hat t,\hat\rho,\hat\rho,\hat x,\hat x)&\ge  \phi_{R,0}(\hat\rho,\hat x)(U-\widetilde U)(\hat t,\hat x,\hat\rho)+\frac{b_0\sigma_0}{T}\hat t-\frac{2d}{\hat t}-2a\|\hat x\|^2
\notag\\&= \sigma_0+\frac{b_0\sigma_0}{T}\hat t-\frac{2d}{\hat t}-2a\|\hat x\|^2>\sigma_0+\frac{b_0\sigma_0}{2T}\hat t>\sigma_0,
\end{align} 
where we use $\phi_{R,0}(\hat \rho,\hat x)=1.$ 
For points $(\rho,x),(\mu,y)\notin \mathcal A_R,$ by \eqref{phi-property2} and the boundedness of $U$ on $\mathcal A^{c}_R=\{(t,\rho,x): t\in[0,T],\mathcal H_0(\rho,x)\ge  2R\},$ we have \begin{align}\label{smaller0}
\Psi(t,s,\rho,\mu,x,y)\leq CR^{-\beta}+\frac{b_0\sigma_0}{2T}(t+s)-\frac{(t-s)^2}{2\delta}-\frac dt-\frac ds\leq \frac12\sigma_0,
\end{align} where in the last step we use $b_0<\frac12$ and let $R>R(\sigma_0,\beta)$ be sufficiently large.  
Hence, the maximizer of $\Psi$, denoted by $\bar z=(\bar t,\bar s,\bar \rho,\bar \mu,\bar x,\bar y)$ (depending on $d,a,\gamma,\delta$) if exists, it must satisfy $(\bar\rho,\bar x),(\bar\mu,\bar y)\in \mathcal A_R.$ 
 
Since $\Psi\to-\infty$ as $\|x\|\to\infty$ or $\|y\|\to\infty$, there exists some $C_0:=C_0(a,R)>0$ such that  \begin{align}\label{small11}
\Psi(t,s,\rho,\mu,x,y)\leq C(R)+\sigma_0-a(\|x\|^2+\|y\|^2)<0\text{  for }\|x\|\vee \|y\|\ge C_0.
\end{align} Similarly, $\Psi\to-\infty$ as $t\to0$ or $s\to0$ uniformly with $\rho,\mu,x,y,$ which implies that there exists $c:=c(d)>0$ such that 
\begin{align}\label{small02}
\Psi(t,s,\rho,\mu,x,y)<0 \text{ when } t\wedge s\leq c.
\end{align} 
Hence by the continuity of $\Psi$, the maximizer $\bar z$ exists on the compact set $\mathcal A_{R,C_0,c}:=([c,T]\times(\bar{\mathcal A}_R))^2\cap (\mathcal A_{R,C_0})^2,$ and satisfies 
\begin{align}\label{max-xy}
\|\bar x\|\vee \|\bar y\|\leq C_0\text{ and }\bar t\wedge \bar s\ge c,
\end{align}  
where we recall the definition of $\mathcal A_{R,C_0}$ in \eqref{malcalA-def}.   

Moreover, from \eqref{smaller0}, \eqref{small11}, \eqref{small02} and \eqref{larger0}, we deduce that 
the maximizer is always in the interior of $\mathcal A_{R,C_0,c}$, except the potential occurrence of $\bar t=T$ or $\bar s=T,$ i.e., 
\begin{align}\label{interior}\bar z\in\mathcal A_{R,C_0,c}^{\circ}&\cup \Big(\{(\{t=T\}\times\bar{\mathcal A}_R)\times(  \{s\in[c,T]\}\times\bar{\mathcal A}_R)\}\cap\mathcal A^2_{R,C_0}\Big)\notag\\
&\cup \Big(\{(\{s=T\}\times\bar{\mathcal A}_R) \times(\{t\in[c,T]\}\times\bar{\mathcal A}_R)\}\cap\mathcal A^2_{R,C_0}\Big).
\end{align} 
We would like to mention that we will remove the case $\bar t=T$ or $\bar s=T$ at the end of \textit{Step 1}. Furthermore, we can obtain a stronger result, namely, the maximizer lies in the interior $\mathcal A_{R,C_0}^{\circ}$ within a small neighborhood of radius $\epsilon_1 > 0$.  Indeed,  
 by the fact that 
on $\mathcal A_{R,C_0,c}\cap(\partial\mathcal A_{R,C_0})^2$ it holds $\Psi(t,s,\rho,\mu,x,y)\leq \frac12\sigma_0$ due to \eqref{smaller0}, we have that for the number $\frac{\sigma_0}{2}>0,$ 
there exists a constant $\epsilon_1>0,$ such that 
  \begin{align}\label{epsilon1-def}
B_{\epsilon_1}(\bar t,\bar \rho,\bar x), B_{\epsilon_1}(\bar s,\bar\mu,\bar y)\in \mathcal A^{\circ}_{R,C_0},\text{ i.e., }(\bar t,\bar \rho,\bar x),(\bar s,\bar\mu,\bar y)\in(\mathcal A_{R,C_0})_{\epsilon_1},
\end{align}  where we recall the definition of $(\mathcal A_{R,C_0})_{\epsilon_1}$ in \eqref{shrunken}, and $B_{\epsilon_1}(\bar t,\bar \rho,\bar x):=\{(t,\rho,x):|t-\bar t|+\|\rho-\bar\rho\|+\|x-\bar x\|\leq \epsilon_1\}.$
 In addition, by  Assumption \ref{energy},  
there exist constants $C_1:=C_1(R)\in(0,1)$ and $C_2:=C_2(R)>0$  such that \begin{align}
&\min\{\min_{i=1,\ldots,n}\bar\rho_i,\min_{i=1,\ldots,n}\bar\mu_i\}\ge {C_1},\quad \text{namely, }\bar\rho,\bar\mu\in\mathcal P_{C_1}(G),\label{bounded-rho}\\
&\max_{(i,j)\in E}(\bar y_i-\bar y_j) \vee\max_{(i,j)\in E}(\bar x_i-\bar x_j)\leq \sqrt{\frac{2R}{\min\limits_{(i,j)\in E}\omega_{ij}\min_{i=1,\ldots,n}\bar\rho_i}}\leq C_2. \label{bounded-xy}
\end{align}

Next, we establish some smallness properties at the maximum point $\bar z$, which will be employed in \textit{Step 4} to obtain some relevant estimates. 
 To this end, define \begin{align*}
&M_{a,\gamma,\delta}:=\Psi(\bar t,\bar s,\bar\rho,\bar\mu,\bar x,\bar y)=\sup_{(t,s,\rho,\mu,x,y)\in((0,T)\times \mathcal P^{\circ}(G)\times \mathbb R^n)^2}\Psi(t,s,\rho,\mu,x,y),\\ 
&M_{a,\gamma}:=\sup_{(t,\rho,\mu,x,y)\in(0,T)\times(\mathcal P^{\circ}(G)\times\mathbb R^n)^2}\Psi( t, t,\rho,\mu,x,y),\\& M_{a}:=\sup_{(t,\rho,x)\in (0,T)\times \mathcal P^{\circ}(G)\times \mathbb R^n} \Psi(t,t,\rho,\rho,x,x).
\end{align*}

\textit{Claim: It holds that 
\begin{align}
&\lim_{\delta\to0}\frac{(\bar t-\bar s)^2}{\delta}=0, \quad \lim_{\gamma\to0}\limsup_{\delta\to0}\frac{\|\bar\rho-\bar\mu\|^2+\|\bar x-\bar y\|^2}{\gamma}=0,\label{max-relation1}\\&\lim_{a\to0}\limsup_{\gamma,\delta\to0}a(\|\bar x\|^2+\|\bar y\|^2)=0.\label{max-relation2}
\end{align}}

By the relation
\begin{align*}
M_{a,\gamma,\delta}+\frac{(\bar t-\bar s)^2}{4\delta}=\Psi(\bar t,\bar s,\bar\rho,\bar\mu,\bar x,\bar y)+\frac{(\bar t-\bar s)^2}{4\delta}\leq M_{a,\gamma,2\delta},
\end{align*}
and $\lim_{\delta\to0}M_{a,\gamma,\delta}$ exists due to the monotone convergence theorem, 
we deduce the first relation in \eqref{max-relation1}. 
 Similar to the proof of \cite[Eq. (23)]{dang2025graph}, it holds $\lim_{\delta\to0}M_{a,\gamma,\delta}=M_{a,\gamma}.$
By the relation 
$$M_{a,\gamma,\delta}+\frac{\|\bar \rho-\bar\mu\|^2+\|\bar x-\bar y\|^2}{4\gamma}
+\frac{(\bar t-\bar s)^2}{4\delta}\leq M_{a,2\gamma,2\delta},$$ and taking supremum limit with $\delta\to0,$ we derive 
$$M_{a,\gamma}+\limsup_{\delta\to0}\frac{\|\bar\rho-\bar\mu\|^2+\|\bar x-\bar y\|^2}{4\gamma}\leq M_{a,2\gamma}.$$ This, together with the fact that $\lim_{\gamma\to0}M_{a,\gamma}$ exists, yields 
\begin{align*}
\lim_{\gamma\to0}\limsup_{\delta\to0}\frac{\|\bar\rho-\bar\mu\|^2+\|\bar x-\bar y\|^2}{4\gamma}=0.
\end{align*}
The proof of \eqref{max-relation2} is analogous and therefore omitted.

From the above argument, we can also derive that $\bar t,\bar s\neq T.$ Indeed, if either $\bar t$ or $\bar s$  was equal to $T$, then by \eqref{max-relation1} and \eqref{max-relation2}, one could choose $\delta,\gamma,a,$ and $d$ sufficiently small such that $\Psi(\bar z)<\sigma_0$, which contradicts \eqref{larger0}. This combining with \eqref{interior}, we deduce that $\bar z\in\mathcal A^{\circ}_{R,C_0,c}.$

\textit{Step 2: Perturb  $\Psi$ to obtain a twice differentiable maximizer.} 

In this step, we perturb $\Psi$ in order to ensure the existence of a maximizer at which the perturbed function is twice differentiable. This property will be used in \textit{Step 3} to derive certain relations at the maximum point.  
 Since $\Psi$ may not be differentiable at the maximizer point due to the fact that $U,\widetilde U$ may not be differentiable. We consider the corresponding  semiconvex and semiconcave approximations of $U\phi_{R,0},\widetilde U\phi_{R,0}$ on the set $\mathcal A_{R,C_0,c}$,  
denoted by $(U\phi_{R,0})^{\vartheta}$ and $(\widetilde U\phi_{R,0})_{\vartheta}$ respectively, where $\vartheta\in(0,1).$  
Then we have $((U\phi_{R,0})^{\vartheta}-(\widetilde U\phi_{R,0})_{\vartheta})(\hat t,\hat\rho,\hat x)\ge (U\phi_{R,0}-\widetilde U\phi_{R,0})(\hat t,\hat\rho,\hat x)=\sigma_0>0.$ 

For each $\vartheta\in(0,1),$ define the auxiliary function 
\begin{align*}
&\Psi^{\vartheta}(t,s,\rho,\mu,x,y)=(U\phi_{R,0})^{\vartheta}(t,\rho,x)-(\widetilde U\phi_{R,0})_{\vartheta}(s,\mu,y)-\frac{(t-s)^2}{2\delta}+\frac{b_0\sigma_0}{2T}(t+s)\\
&\quad -\frac{d}{t}-\frac{d}{s}
-\Big(a(\|x\|^2+\|y\|^2)+\frac{\|\rho-\mu\|^2+\|x-y\|^2}{2\gamma}\Big).\end{align*} 
 Recalling that $(\mathcal A_{R,C_0,c})^2$ is a compact set, thus for each $\vartheta\in(0,1),$ the continuous function $\Psi^{\vartheta}$ has a maximizer in this compact set, which is denoted by $\bar z^{\vartheta}$. It follows from the Bolzano--Weierstrass theorem that there exists a subsequence $\bar z^{\vartheta_k}$ with $\vartheta_k\to0$ as $k\to\infty$ such that $\bar z^{\vartheta_k}\to z_0$ with some $z_0\in(\mathcal A_{R,C_0,c})^2.$

 Recall that $\bar z$ is the maximizer of $\Psi,$ which implies $\Psi(\bar z)\ge \Psi(z_0).$  We have the following claim. 
 
 \textit{Claim: It holds that \begin{align}\label{maximim-z0}
\Psi(\bar z)\ge \Psi(z_0)=\lim_{k\to\infty} \Psi^{\vartheta_k}(\bar z^{\vartheta_k})
\ge \Psi(\bar z). 
\end{align}} 

In fact, on the  one hand, due to the maximum property of $\bar z^{\vartheta_k}$ of $\Psi^{\vartheta_k}, $ we have 
$$\Psi^{\vartheta_k}(\bar z^{\vartheta_k})\ge \Psi^{\vartheta_k}(\bar z)\ge \Psi(\bar z),$$ where in the last inequality, we use $$(U\phi_{R,0})^{\vartheta}(\bar t,\bar \rho,\bar x)\ge (U\phi_{R,0})(\bar t,\bar \rho,\bar x), \quad (\widetilde U\phi_{R,0})_{\vartheta}(\bar s,\bar \mu,\bar y)\leq (\widetilde U\phi_{R,0})(\bar s,\bar \mu,\bar y),$$  (see \eqref{ine23}). On the other hand, by the uniform convergence of $(U\phi_{R,0})^{\vartheta}$ to $U\phi_{R,0}$ (Lemma \ref{uniform-converge}) and the continuity of $U\phi_{R,0}$, we obtain \begin{align*}
&\quad |(U\phi_{R,0})^{\vartheta_k}(\bar z^{\vartheta_k})-U\phi_{R,0}(z_0)|\\
&\leq |(U\phi_{R,0})^{\vartheta_k}(\bar z^{\vartheta_k})-(U\phi_{R,0})(\bar z^{\vartheta_k})|+|(U\phi_{R,0})(\bar z^{\vartheta_k})- U\phi_{R,0}(z_0)|\\
&\leq \sup_{z\in\mathcal A_{R,C_0}}|(U\phi_{R,0})^{\vartheta_k}(z)-(U\phi_{R,0})(z)|+ |(U\phi_{R,0})(\bar z^{\vartheta_k})- U\phi_{R,0}(z_0)|\to0\text{ as }k\to\infty;
\end{align*}  And similarly, $|(\widetilde U\phi_{R,0})_{\vartheta_k}(\bar z^{\vartheta_k})- (\widetilde U\phi_{R,0})(z_0)|\to0$ as $k\to\infty.$ These implies that $\Psi^{\vartheta_k}(\bar z^{\vartheta_k})\to\Psi(\bar z_0)$ as $k\to\infty.$  As a result, the claim \eqref{maximim-z0} is proved.

From  \eqref{maximim-z0} we observe that $z_0$ is a maximizer of $\Psi.$ In general, the maximizer of $\Psi$ may not be unique, i.e., $z_0$ may not equal $\bar z$, but they both admit properties characterized in \textit{Step 1}. Here, for notational simplicity, we write $\vartheta\to0$ instead of the subsequence $\vartheta_k\to0$, and write $z_0=\bar z,$ i.e., we have 
\begin{align}\label{zlimit}
\lim_{\vartheta\to0}\bar z^{\vartheta}=\bar z.
\end{align} 
From \eqref{epsilon1-def}, for $\frac{\epsilon_1}{2}>0,$ there exists $\vartheta_0$ such that $\vartheta<\vartheta_0,$ it holds that $\bar z^{\vartheta}\in B_{\epsilon_1/2}(\bar z)\subset (\mathcal A^{\circ}_{R, C_0,c})_{\frac{\epsilon_1}{2}}^2$. 

Besides, we can always suppose the maximizer of $\Psi^{\vartheta}$ is strict with respect to $(x,y)$ variables. Otherwise we can add a purterbation term to $\Psi^{\vartheta}$ and consider: for small number $r>0,$ $$\Psi^{\vartheta}(t,s,\rho,\mu,x,y)-r(\|x-\bar x^{\vartheta}\|^2+\|y-\bar y^{\vartheta}\|^2),$$ which takes the strict maximum at $\bar z^{\vartheta}.$ 
We note that $\Psi^{\vartheta}(\bar t^{\vartheta},\bar s^{\vartheta},\bar\rho^{\vartheta},\bar\mu^{\vartheta},\cdot,\cdot)$ is twice differentiable with respect to $(x,y)$ variables almost everywhere due to Lemma \ref{Alex}, but it 
may not be twice differentiable at $(\bar x^{\vartheta},\bar y^{\vartheta}).$ On the set $\mathcal A_{R,C_0,c},$ the function $\Psi^{\vartheta}$ is semiconvex, and $\bar z^{\vartheta}$ is a strict maximizer. By Lemma \ref{Jensen},  for the above fixed $r>0,$ there exist numbers $q_0,q_1$ and  vectors $p_i,i=0,1,2,3,$ with 
\begin{align}\label{bounded-pq}
|q_0|^2+|q_1|^2+\sum_{i=0}^3\|p_i\|^2<r^2
\end{align} such that
\begin{align}\label{auxi-2}
&\tilde\Psi^{\vartheta}:=\Psi^{\vartheta}-q_0(t-\bar t^{\vartheta})-q_1(s-\bar s^{\vartheta})-p_0^{\top}
(x-\bar x^{\vartheta})-p^{\top}_1(y-\bar y^{\vartheta})-p_2^{\top}(\rho-\bar\rho^{\vartheta})-p_3^{\top}(\mu-\bar\mu^{\vartheta})
\end{align} takes local  maximum at 
\begin{align}
\label{zlimit2}
\bar z^{{\vartheta},r}:=(\bar t^{{\vartheta},r},\bar s^{{\vartheta},r},\bar\rho^{{\vartheta},r},\bar\mu^{{\vartheta},r},\bar x^{{\vartheta},r},\bar y^{{\vartheta},r})\in B_r(\bar z^{\vartheta})
\end{align} 
and is twice differentiable with respect to $(x,y)$ variables. 

\textit{Step 3: Some relations at the twice differentiable maximum point.}

In this step, we present certain equalities for the first-order derivatives and an inequality for the second-order derivative of the auxiliary function $\tilde\Psi^{\vartheta}$ at its maximum point.   
From \cite[Corollary 3.17]{MCC}, we know that at the maximum point $\bar z^{\vartheta,r}$, the Fr\'echet derivatives of the mapping $\tilde\Psi^{\vartheta}$ with respect to $\rho,\mu$ variables satisfy $$\partial_{\rho}\tilde\Psi^{\vartheta}(\bar\rho^{\vartheta,r})=0, \quad \partial_{\mu}\tilde\Psi^{\vartheta}(\bar\mu^{\vartheta,r})=0.$$ And when $(\bar t^{\vartheta,r},\bar s^{\vartheta,r},\bar\rho^{\vartheta,r},\bar\mu^{\vartheta,r})$ is fixed, the second-order derivative of $\tilde\Psi^{\vartheta}$ with respect to $(x,y)$ variables satisfy $D^2\tilde\Psi^{\vartheta}\leq 0,$  which means that $D^2\tilde\Psi^{\vartheta}$ is a $2n\times 2n$ non-positive definite symmetric matrix. 
As a result, 
we  have that the first-order derivatives of $\tilde\Psi^{\vartheta}$ with $t,s,x,y,\rho,\mu$ variables  satisfy 
\begin{align}
&\frac{\partial}{\partial t}(U\phi_{R,0})^{\vartheta}-\frac{\bar t^{\vartheta,r}-\bar s^{\vartheta,r}}{\delta}+\frac{b_0\sigma_0}{2T}+\frac {d}{(\bar t^{\vartheta,r})^2}-q_0=0,\label{equality-s}\\
& - \frac{\partial}{\partial s}(\widetilde U\phi_{R,0})_{\vartheta}-\frac{\bar s^{\vartheta,r}-\bar t^{\vartheta,r}}{\delta}+\frac{b_0\sigma_0}{2T}+\frac {d}{(\bar s^{\vartheta,r})^2}-q_1=0,\label{equality-t}\\
& D_x(U\phi_{R,0})^{\vartheta}-(\frac{ \bar x^{\vartheta,r}-\bar y^{\vartheta,r}}{\gamma}+2a \bar x^{\vartheta,r})
-p_0=0,\label{equality-x}\\
& -D_y(\widetilde U\phi_{R,0})_{\vartheta}-(\frac{ \bar y^{\vartheta,r}-\bar x^{\vartheta,r}}{\gamma}+2a \bar y^{\vartheta,r})-p_1 =0,\label{equality-y}\\
&\partial_{\rho}(U\phi_{R,0})^{\vartheta}-\frac{1}{\gamma}\sum_{j=1}^n\hat e_j(\bar\rho^{\vartheta,r}_j-\bar\mu^{\vartheta,r}_j)
-\sum_{j=1}^n\hat e_jp_{2,j}
=0,\label{equality-rho}\\
&-\partial_{\mu}(\widetilde U\phi_{R,0})_{\vartheta}-\frac{1}{\gamma} \sum_{j=1}^n\hat e_j(\bar\mu^{\vartheta,r}_j-\bar\rho^{\vartheta,r}_j)
-\sum_{j=1}^n\hat e_jp_{3,j}
=0.\label{equality-mu}
\end{align}
In \eqref{equality-rho} and \eqref{equality-mu}, we use \eqref{Fre2} to obtain  $\partial_{\rho}\|\rho-\mu\|^2=
2\sum_{j=1}^n\hat e_j(\rho_j-\mu_j),\;
\partial_{\mu}\|\rho-\mu\|^2=-\partial_{\rho}\|\rho-\mu\|^2$ for $\rho,\mu\in\mathcal P^{\circ}(G)$.
And the second-order derivatives of $\tilde\Psi^{\vartheta}$  with $x,y$ variables satisfy 
\begin{equation}\label{equality-xxyy}
\begin{pmatrix}
D^2_x(U\phi_{R,0})^{\vartheta} & 0 \\
0 & -D^2_y(\widetilde U\phi_{R,0})_{\vartheta}  \\
\end{pmatrix}\leq Y.
\end{equation}
Here, $Y$ is a symmetric matrix $Y=[Y_{1,1},Y_{1,2};Y_{1,2},Y_{2,2}]$ defined by 
\begin{align} \label{compoY1}
Y_{1,1}=(\frac{1}{\gamma}+2a)\mathrm{Id}_{n}, \;
Y_{1,2}= -\frac1\gamma\mathrm{Id}_n,\;
Y_{2,2}= (\frac{1}{\gamma}+2a)\mathrm{Id}_{n}. 
\end{align}

\textit{Step 4: Viscosity inequalities and estimates.}

In this step, we invoke the viscosity sub- and super-solution properties of the semiconvex and semiconcave approximations, respectively, to derive the corresponding viscosity inequalities. Building on the relations obtained in the previous steps, we then proceed to establish the key upper and lower bound estimates needed for the uniqueness argument.

By Lemma \ref{lem1}, 
we have that 
$(U\phi_{R,0})^{\vartheta},(\widetilde U\phi_{R,0})_{\vartheta}$ are viscosity sub- and super- solutions in $(\mathcal A_{R,C_0})_{\sqrt \vartheta k_0}$ with $(H^R_{U})^\vartheta$ and  $(H^R_{\widetilde U})_{\vartheta},$ respectively. 
Thus we obtain 
\begin{align*}
&\Big[\frac{\partial}{\partial t}(\phi_{R,0} U)^{\vartheta}+(H^R_{U})^{\vartheta}(\cdot,\partial_{\rho}(U\phi_{R,0})^\vartheta,D_x(U\phi_{R,0})^{\vartheta},D^2_x(U\phi_{R,0})^{\vartheta})\Big]\Big|_{(\bar t^{\vartheta,r},\bar\rho^{\vartheta,r},\bar x^{\vartheta,r})}\ge 0,\\
&  \Big[\frac{\partial}{\partial s}(\phi_{R,0} \widetilde U)_{\vartheta}+(H^R_{\widetilde U})_{\vartheta}(\cdot,\partial_{\mu}(\widetilde U \phi_{R,0})_{\vartheta},D_y(\widetilde U\phi_{R,0})_{\vartheta},D^2_y(\widetilde U\phi_{R,0})_{\vartheta})\Big]\Big|_{(\bar s^{\vartheta,r},\bar\mu^{\vartheta,r},\bar y^{\vartheta,r})}\leq 0.
\end{align*}
Hence  
\begin{align}\label{estimate-main}
&\quad \big[\frac{\partial}{\partial s}(\phi_{R,0}\widetilde U)_{\vartheta}-\frac{\partial}{\partial t}(\phi_{R,0}U)^{\vartheta}\big]|_{\bar z^{\vartheta,r}}\notag\\
&\leq \big[(H^R_{U})^{\vartheta}(\cdot,\partial_{\rho}(U\phi_{R,0})^{\vartheta},D_x(U\phi_{R,0})^{\vartheta},D^2_x(U\phi_{R,0})^{\vartheta})\notag\\
&\quad -(H^R_{\widetilde U})_{\vartheta}(\cdot,\partial_{\mu}(\widetilde U\phi_{R,0})_{\vartheta},D_y(\widetilde U\phi_{R,0})_{\vartheta},D^2_y(\widetilde U\phi_{R,0})_{\vartheta})\big]|_{(\bar z^{\vartheta,r})}\notag\\
&= H^R_{U}(\bar t^{\vartheta,r}_0,\bar \rho^{\vartheta,r}_0, \bar x^{\vartheta,r}_0,(\partial_{\rho}(U\phi_{R,0})^{\vartheta},D_x(U\phi_{R,0})^{\vartheta},D^2_x(U\phi_{R,0})^{\vartheta})(\bar t^{\vartheta,r},\bar \rho^{\vartheta,r}, \bar x^{\vartheta,r})) \notag\\
&\quad- H^R_{\widetilde U}(\bar s^{\vartheta,r}_0,\bar\mu^{\vartheta,r}_0, \bar y^{\vartheta,r}_0,(\partial_{\mu}(\widetilde U\phi_{R,0})_{\vartheta},D_y(\widetilde U\phi_{R,0})_{\vartheta},D^2_y(\widetilde U\phi_{R,0})_{\vartheta})(\bar s^{\vartheta,r},\bar\mu^{\vartheta,r}, \bar y^{\vartheta,r})).
\end{align} 
Here,   
we first recall the definitions of $H^R_U$ in \eqref{def_H4},  $(H^R_U)^{\vartheta}$ in \eqref{Hvartheta}, and $(H^R_{\widetilde U})_{\vartheta}$ in \eqref{Hvartheta2}.    In the last step of \eqref{estimate-main}, we take points $(\bar t^{\vartheta,r}_0,\bar \rho^{\vartheta,r}_0, \bar x^{\vartheta,r}_0)$ and $(\bar s^{\vartheta,r}_0,\bar\mu^{\vartheta,r}_0, \bar y^{\vartheta,r}_0)$ such that 
\begin{align*}
&\quad (H^R_{U})^{\vartheta}(\bar t^{\vartheta,r},\bar \rho^{\vartheta,r}, \bar x^{\vartheta,r},\partial_{\rho}(U\phi_{R,0})^{\vartheta},D_x(U\phi_{R,0})^{\vartheta},D^2_x(U\phi_{R,0})^{\vartheta})\\
&= H^R_{U}(\bar t^{\vartheta,r}_0,\bar \rho^{\vartheta,r}_0, \bar x^{\vartheta,r}_0,(\partial_{\rho}(U\phi_{R,0})^{\vartheta},D_x(U\phi_{R,0})^{\vartheta},D^2_x(U\phi_{R,0})^{\vartheta})(\bar t^{\vartheta,r},\bar \rho^{\vartheta,r}, \bar x^{\vartheta,r})),
\end{align*}
and 
\begin{align*}
&\quad (H^R_{\widetilde U})_{\vartheta}(\bar s^{\vartheta,r},\bar\mu^{\vartheta,r}, \bar y^{\vartheta,r},\partial_{\mu}(\widetilde U\phi_{R,0})_{\vartheta},D_y(\widetilde U\phi_{R,0})_{\vartheta},D^2_y(\widetilde U\phi_{R,0})_{\vartheta})\\
&=
H^R_{\widetilde U}(\bar s^{\vartheta,r}_0,\bar\mu^{\vartheta,r}_0, \bar y^{\vartheta,r}_0,(\partial_{\mu}(\widetilde U\phi_{R,0})_{\vartheta},D_y(\widetilde U\phi_{R,0})_{\vartheta},D^2_y(\widetilde U\phi_{R,0})_{\vartheta})(\bar s^{\vartheta,r},\bar\mu^{\vartheta,r}, \bar y^{\vartheta,r})),
\end{align*}
respectively. 
  According to the definition of $(H^R_U)^{\vartheta}$ and $(H^R_{\widetilde U})_{\vartheta}$ (see \eqref{Hvartheta} and \eqref{Hvartheta2}), 
the two points $(\bar t^{\vartheta,r}_0,\bar \rho^{\vartheta,r}_0, \bar x^{\vartheta,r}_0)$ and $(\bar s^{\vartheta,r}_0,\bar\mu^{\vartheta,r}_0, \bar y^{\vartheta,r}_0)$
 satisfy 
 \begin{align}\label{relation-twopoint}
&|\bar t^{\vartheta,r}-\bar t^{\vartheta,r}_0|+\|
\bar \rho^{\vartheta,r}-\bar \rho^{\vartheta,r}_0\|+
\|\bar x^{\vartheta,r}-\bar x^{\vartheta,r}_0\|
\leq \sqrt{\vartheta},\notag\\&|\bar s^{\vartheta,r}-\bar s^{\vartheta,r}_0|+\|
\bar \mu^{\vartheta,r}-\bar \mu^{\vartheta,r}_0\|+ \| \bar y^{\vartheta,r}-\bar y^{\vartheta,r}_0\|\leq \sqrt{\vartheta}.
\end{align}

It follows from \eqref{equality-s} and \eqref{equality-t} that the left-hand side of \eqref{estimate-main} is 
\begin{align}\label{left-estimate}&\quad \frac{\partial}{\partial s}(\phi_{R,0}\widetilde U)_{\vartheta}\big|_{(\bar s^{\vartheta,r},\bar\mu^{\vartheta,r},\bar y^{\vartheta,r})}-\frac{\partial}{\partial t}(\phi_{R,0}U)^{\vartheta}\big|_{(\bar t^{\vartheta,r},\bar\rho^{\vartheta,r},\bar x^{\vartheta,r})}\notag\\&=\frac{b_0\sigma_0}{T}-q_0-q_1+d(\frac{1}{(\bar t^{\vartheta,r})^2}+\frac{1}{(\bar s^{\vartheta,r})^2}).
\end{align} 
 The right-hand side of \eqref{estimate-main} can be split further as 
$\sum_{i=1}^6
II_i,
$
where \begin{align*}
&II_1=\langle \partial_{\rho}(U\phi_{R,0})^{\vartheta}(\bar\rho^{\vartheta,r},\bar x^{\vartheta,r}),D_x\mathcal H_0 (\bar\rho^{\vartheta,r}_0,\bar x^{\vartheta,r}_0)\rangle-\langle \partial_{\mu}({\widetilde U}\phi_{R,0})_{\vartheta} (\bar\mu^{\vartheta,r},\bar y^{\vartheta,r}),D_y \mathcal H_0(\bar\mu^{\vartheta,r}_0, \bar y^{\vartheta,r}_0)\rangle,\\
&II_2=-\langle D_x(U\phi_{R,0})^{\vartheta}(\bar\rho^{\vartheta,r},\bar x^{\vartheta,r}),D_{\rho}\mathcal H_0(\bar\rho^{\vartheta,r}_0,\bar x^{\vartheta,r}_0)\rangle + \langle D_y({\widetilde U}\phi_{R,0})_{\vartheta}(\bar\mu^{\vartheta,r},\bar y^{\vartheta,r}),D_{\mu}\mathcal H_0(\bar\mu^{\vartheta,r}_0,\bar y^{\vartheta,r}_0)\rangle,\\&II_3=\frac12\mathrm{tr}(\sigma\sigma^{\top}D^2_x(U\phi_{R,0})^{\vartheta}(\bar\rho^{\vartheta,r},\bar x^{\vartheta,r}))-\frac12\mathrm {tr}(\sigma\sigma^{\top}D^2_y({\widetilde U}\phi_{R,0})_{\vartheta}(\bar\mu^{\vartheta,r},\bar y^{\vartheta,r})),\\
&II_4=-\mathbb F^R(\bar t^{\vartheta,r}_0,\bar\rho^{\vartheta,r}_0,\bar   x^{\vartheta,r}_0,D_x(U\phi_{R,0})^{\vartheta}(\bar\rho^{\vartheta,r},\bar x^{\vartheta,r})-UD_x\phi_{R,0}(\bar\rho^{\vartheta,r}_0,\bar x^{\vartheta,r}_0))\\&\qquad\quad  +\mathbb F^R(\bar s^{\vartheta,r}_0,\bar\mu^{\vartheta,r}_0, \bar  y^{\vartheta,r}_0,D_y({\widetilde U}\phi_{R,0})_{\vartheta}(\bar\mu^{\vartheta,r},\bar y^{\vartheta,r})-\widetilde UD_y\phi_{R,0}(\bar\mu^{\vartheta,r}_0,\bar y^{\vartheta,r}_0)),\\
&II_5=-\frac{\sigma\sigma^{\top}D_x\phi_{R,0}}{\phi_{R,0}}(\bar\rho^{\vartheta,r}_0,\bar x^{\vartheta,r}_0)D_x(U\phi_{R,0})^{\vartheta}(\bar\rho^{\vartheta,r},\bar x^{\vartheta,r})\\&\qquad\quad  +\frac{\sigma\sigma^{\top}D_y\phi_{R,0}}{\phi_{R,0}}(\bar\mu^{\vartheta,r}_0,\bar y^{\vartheta,r}_0)D_y(\widetilde U\phi_{R,0})_{\vartheta}(\bar\mu^{\vartheta,r},\bar y^{\vartheta,r}),\\
&II_6=-U(\bar \rho^{\vartheta,r}_0,\bar x^{\vartheta,r}_0)\Big[\frac12\mathrm {tr}(\sigma\sigma^{\top}D^2_x\phi_{R,0})-\frac{\sigma\sigma^{\top}D_x\phi_{R,0}D_x\phi_{R,0}}{\phi_{R,0}}\Big](\bar \rho^{\vartheta,r}_0,\bar x^{\vartheta,r}_0)\\
&+\widetilde U(\bar \mu^{\vartheta,r}_0,\bar y^{\vartheta,r}_0)\Big[\frac12\mathrm {tr}(\sigma\sigma^{\top}D^2_y\phi_{R,0})-\frac{\sigma\sigma^{\top}D_y\phi_{R,0}D_y\phi_{R,0}}{\phi_{R,0}}\Big](\bar \mu^{\vartheta,r}_0,\bar y^{\vartheta,r}_0).
\end{align*} 
We now proceed to present, one by one, the estimates for terms   $II_1,\ldots,II_6$. For notational convenience, we adopt the following convention regarding the order of limits: $r,\vartheta,\gamma\to0$ means that $r\to0$ first, followed by $\vartheta\to0$, and finally $\gamma\to0$. This convention applies similarly to limits involving more than three variables.

\textbf{Estimate of the term $II_1$.} Plugging \eqref{equality-rho} and \eqref{equality-mu} into $II_1,$ 
we split the term $II_1$ further as  
\begin{align*}
II_{1}&=\Big\langle \frac{1}{\gamma}\sum_{j=1}^n\hat e_j(\bar\rho^{\vartheta,r}_j-\bar\mu^{\vartheta,r}_j),D_x \mathcal H_0 (\bar\rho^{\vartheta,r}_0,\bar x^{\vartheta,r}_0)- D_y\mathcal H_0(\bar\mu^{\vartheta,r}_0,\bar y^{\vartheta,r}_0)\Big\rangle\\
&+\Big(\langle\sum_{j=1}^n\hat e_j p_{2,j}, D_x\mathcal H_0(\bar\rho^{\vartheta,r}_0,\bar x^{\vartheta,r}_0)
\rangle+
 \langle \sum_{j=1}^n\hat e_j p_{3,j},D_y \mathcal H_0(\bar\mu^{\vartheta,r}_0,\bar y^{\vartheta,r}_0)\rangle\Big)=:II_{1,1}+II_{1,2}.
\end{align*} 
 By \eqref{zlimit}, \eqref{zlimit2}, \eqref{relation-twopoint}, we have \begin{align}\label{bounded-xy2}
\min_{i\in V}\bar\rho^{\vartheta,r}_{0,i}\ge c_R,\quad \max_{(i,j)\in E}|\bar x^{\vartheta,r}_{0,i}-\bar x^{\vartheta,r}_{0,j}|\leq 1+\max_{(i,j)\in E}|\bar x_{i}-\bar x_{j}|\leq C_R
\end{align} due to \eqref{bounded-xy}. Similarly, \begin{align}\label{bounded-xy3}\min_{i\in V}\bar\mu^{\vartheta,r}_{0,i}\ge c_R,\quad \max_{(i,j)\in E}|\bar y^{\vartheta,r}_{0,i}-\bar y^{\vartheta,r}_{0,j}|\leq 1+\max_{(i,j)\in E}|\bar y_{i}-\bar y_{j}|\leq C_R.
\end{align} Then, using the local Lipschitz property of the partial derivatives of ${\mathcal H_0}$, together with the H\"older inequality and the Young inequality, the term $II_{1,1}$ can be estimated as  
\begin{align*}
II_{1,1}
&\leq C_R\frac{\|\bar\rho^{\vartheta,r}-\bar\mu^{\vartheta,r}\|^2+\|\bar\rho^{\vartheta,r}_0-\bar\mu^{\vartheta,r}_0\|^2 }{\gamma}.
\end{align*}
Then proximity relations of the points \eqref{zlimit}, \eqref{zlimit2}, \eqref{relation-twopoint} yield that 
\begin{align*}
II_{1,1} \leq C_R\frac{\|\bar\rho-\bar\mu\|^2}{\gamma}\text{ as }r,\vartheta\to0.
\end{align*} 
According to \eqref{max-relation1}, by further letting $\gamma\to0$, we  derive $II_{1,1}\to0$.

For the term $II_{1,2}$, by means of \eqref{bounded-xy2}, \eqref{bounded-xy3}, and the local Lipschitz continuity of $D_x\mathcal H_0,$ we have  
\begin{align*}
II_{1,2}&\leq C_R(\|p_2\|+\|p_3\|).
\end{align*}
Then using \eqref{bounded-pq} yields $II_{1,2}\to0\text{ as }r\to0$.  

Gathering eatimates of $II_{1,1}$ and $II_{1,2}$ yields 
\begin{align}\label{conclu-1}
II_1\to0\text{ as }r,\vartheta,\gamma\to0.
\end{align}

\textbf{Estimate of the term $II_2.$}  
Plugging \eqref{equality-x} and \eqref{equality-y} into $II_2,$ we have 
\begin{align*}
II_2&=\langle 
-\big(\frac{\bar x^{\vartheta,r}-\bar y^{\vartheta,r}}{\gamma}+2a \bar x^{\vartheta,r}\big)+p_0,D_{\rho}\mathcal H_0 (\bar\rho^{\vartheta,r}_0,\bar x^{\vartheta,r}_0)\rangle
\\
&+\langle -\big(\frac{\bar y^{\vartheta,r}-\bar x^{\vartheta,r}}{\gamma}+2a \bar y^{\vartheta,r}\big)+p_1,D_{\mu}\mathcal H_0 (\bar\mu^{\vartheta,r}_0,\bar y^{\vartheta,r}_0)\rangle \\
&=-
\Big\langle \frac{\bar x^{\vartheta,r}-\bar y^{\vartheta,r}}{\gamma},D_{\rho}\mathcal H_0 (\bar\rho^{\vartheta,r}_0,\bar x^{\vartheta,r}_0) -D_{\mu}\mathcal H_0 (\bar\mu^{\vartheta,r}_0,\bar y^{\vartheta,r}_0) \Big\rangle\\&+\Big(-\langle 2a\bar x^{\vartheta,r},D_{\rho}\mathcal H_0(\bar \rho^{\vartheta,r}_0,\bar  x^{\vartheta,r}_0)\rangle - \langle 2a\bar y^{\vartheta,r},D_{\mu}\mathcal H_0(\bar \mu^{\vartheta,r}_0,\bar  y^{\vartheta,r}_0)\rangle\Big)
\\&-\Big(\langle p_0,D_{\rho}\mathcal H_0(\bar \rho^{\vartheta,r}_0,\bar  x^{\vartheta,r}_0) 
\rangle + \langle p_1,D_{\mu}\mathcal H_0(\bar \mu^{\vartheta,r}_0,\bar  y^{\vartheta,r}_0)\rangle\Big)\\
&=:II_{2,1}+II_{2,2}+II_{2,3}.
\end{align*} 
By the local Lipschitz property of the Euclidean gradient $D_{\rho}\mathcal H_0$ (Assumption \ref{energy} and Definition \ref{def-functions}), \eqref{bounded-xy2}, \eqref{bounded-xy3}, and the Young inequality,  we derive that 
\begin{align*}
II_{2,1}
&\leq C_R\frac{\|\bar x^{\vartheta,r}-\bar y^{\vartheta,r}\|^2+\| \bar x^{\vartheta,r}_0-\bar y^{\vartheta,r}_0\|^2+\| \bar \rho^{\vartheta,r}_0-\bar \mu^{\vartheta,r}_0\|^2}{\gamma}.
\end{align*}
Then by virtue of \eqref{zlimit}, \eqref{zlimit2}, and  \eqref{relation-twopoint}, we derive 
\begin{align*}
II_{2,1}\leq C_R\frac{\| \bar x-\bar y\|^2+\| \bar \rho-\bar \mu\|^2}{\gamma}\text{ as }r\to0,\vartheta\to0.
\end{align*} 
By further letting $\gamma\to0$ and
utilizing  \eqref{max-relation1} lead to 
$II_{2,1}\to0$.

 For the term $II_{2,2},$ by the boundedness estimates in \eqref{bounded-xy2}--\eqref{bounded-xy3},  and the local Lipschitz continuity of the Euclidean gradient $D_{\rho}\mathcal H_0,$  we have 
\begin{align*}
II_{2,2}\leq C_Ra(\| \bar x^{\vartheta,r}\|+\|\bar y^{\vartheta,r}\|).
\end{align*}
Further taking into account \eqref{zlimit}, \eqref{zlimit2} yields 
$II_{2,2}\leq C_Ra(\| \bar x\|+\|\bar y\|)
$ by letting $r,\vartheta\to0.$ Then using  \eqref{max-relation2}  gives $ II_{2,2}\to0\text{ as }r,\vartheta,a\to0.
$ 

The estimate of the term $II_{2,3}$ is similar to  that of $II_{2,2},$  noticing the smallness property $\|p_0\|+\|p_1\|\leq r$ due to 
 \eqref{bounded-pq}. Hence we deduce $II_{2,3}\to0$ as $r\to0$. 
 
 Gathering eatimates of $II_{2,1},II_{2,2}$ and $II_{2,3}$ yields  \begin{align}\label{conclu-2}
II_{2}\to0\text{ as }r,\vartheta,\gamma,a\to0.
\end{align}

\textbf{Estimate of the term $II_3.$} By the fact that $\mathrm{tr}(\sigma\sigma^{\top}A)=\mathrm{tr}(\sigma^{\top}A\sigma)$ for a matrix $A=(A_{ij})_{n\times n},$ we have 
\begin{align*}
II_{3}&=\frac{1}{2}(\mathrm{tr}[\sigma\sigma^{\top}D^2_x(U\phi_{R,0})^{\vartheta}(\bar\rho^{\vartheta,r},\bar x^{\vartheta,r})]-\mathrm{tr}[\sigma\sigma^{\top}D^2_y({\widetilde U}\phi_{R,0})_{\vartheta}(\bar\mu^{\vartheta,r},\bar y^{\vartheta,r})])\\&=\frac{1}{2}\mathrm{tr}(( \sigma^{\top}, \sigma^{\top})\mathrm{diag}(D^2_x(U\phi_{R,0})^{\vartheta},-D^2_y({\widetilde U}\phi_{R,0})_{\vartheta})(\sigma^{\top},\sigma^{\top})^{\top})\\
&\leq \frac{1}{2}\mathrm{tr}(( \sigma^{\top}, \sigma^{\top}) Y (\sigma^{\top},\sigma^{\top})^{\top}),
\end{align*}
where in the last step we use \eqref{equality-xxyy}. Recalling the definition of components of $Y$ given by \eqref{compoY1},    one can calculate that 
\begin{align}\label{conclu-3}
II_{3}\leq \frac12\mathrm{tr}((\sigma^{\top}, \sigma^{\top}) Y(\sigma^{\top}, \sigma^{\top}) ^{\top}\leq C\|\sigma\|^2a\to0\text{ as }a\to0.
\end{align}

\textbf{Estimate of the term $II_4.$} 
 Recall that we have proved that $\mathbb F^R$ admits the local Lipschitz property (see \eqref{pq-regularity} and \eqref{xy-regularity}). Combining this with   
 the boundness properties \eqref{max-xy}, \eqref{bounded-xy2}, \eqref{bounded-xy3}, implies that 
 {\small
\begin{align*}
II_4&=-\mathbb F ^R(\bar t^{\vartheta,r}_0,\bar\rho^{\vartheta,r}_0, \bar x^{\vartheta,r}_0 ,D_x(U\phi_{R,0})^{\vartheta}- UD_x\phi_{R,0})+\mathbb F^R(\bar s^{\vartheta,r}_0,\bar\mu^{\vartheta,r}_0, \bar y^{\vartheta,r}_0 ,D_x(U\phi_{R,0})^{\vartheta}- UD_x\phi_{R,0}) \\&-\mathbb F^R (\bar s^{\vartheta,r}_0,\bar\mu^{\vartheta,r}_0, \bar y^{\vartheta,r}_0  ,D_x(U\phi_{R,0})^{\vartheta}- UD_x\phi_{R,0})+ \mathbb F^R (\bar s^{\vartheta,r}_0,\bar\mu^{\vartheta,r}_0, \bar y^{\vartheta,r}_0 ,D_y(\widetilde U\phi_{R,0})_{\vartheta}- \widetilde UD_y\phi_{R,0})\\
&\leq C_{R,C_0,C_1} (|\bar t^{\vartheta,r}_0-\bar s^{\vartheta,r}_0|+\|\bar\rho^{\vartheta,r}_0-\bar\mu^{\vartheta,r}_0\|+\|\bar x^{\vartheta,r}_0-\bar y^{\vartheta,r}_0\|)\\ 
&+C_{R,C_0,C_1}(\| D_x(U\phi_{R,0})^{\vartheta}(\bar\rho^{\vartheta,r},\bar x^{\vartheta,r})-D_y({\widetilde U}\phi_{R,0})_{\vartheta}(\bar\mu^{\vartheta,r},\bar y^{\vartheta,r})\|)\\&+C_{R,C_0,C_1}(\|UD_x\phi_{R,0}(\bar\rho^{\vartheta,r}_0,\bar x^{\vartheta,r}_0)-\widetilde UD_y\phi_{R,0}(\bar\mu^{\vartheta,r}_0,\bar y^{\vartheta,r}_0)\|)=:II_{4,1}+II_{4,2}+II_{4,3}.
\end{align*}}  
For the term $II_{4,1},$ by taking  $r,\vartheta\to0$, it holds $$II_{4,1}\leq C_{R,C_0,C_1}(|\bar t-\bar s|+\|\bar\rho-\bar\mu\|+\|\bar x-\bar y\|)$$  according to \eqref{zlimit}, \eqref{zlimit2}, and  \eqref{relation-twopoint}. Then by further letting 
$\delta,\gamma\to0$, we derive from \eqref{max-relation1} and \eqref{max-relation2} that  $II_{4,1}\to0.$

For the term $II_{4,2},$ from the relation for the first-order derivatives given in  \eqref{equality-x} and \eqref{equality-y}, we derive that 
\begin{align*}
&\|D_x(U\phi_{R,0})^{\vartheta}(\bar\rho^{\vartheta,r},\bar x^{\vartheta,r})-D_y({\widetilde U}\phi_{R,0})_{\vartheta}(\bar\mu^{\vartheta,r},\bar y^{\vartheta,r})\|
\leq C(a\|\bar x^{\vartheta,r}\|+a\|\bar y^{\vartheta,r}\|+\|p_0\|+\|p_1\|).
\end{align*}
By virtue of \eqref{zlimit}, \eqref{zlimit2}, taking $r,\vartheta\to0,$ we get 
\begin{align*}
Ca(\|\bar x^{\vartheta,r}\|+\|\bar y^{\vartheta,r}\|)\leq Ca(\|\bar x\|+\|\bar y\|);
\end{align*}
Moreover, $a(\|\bar x\|+\|\bar y\|)\to0$ as $a\to0$ according to \eqref{max-relation2}. 
And using \eqref{bounded-pq} yields $\|p_0\|+\|p_1\|\to0$ as $r\to0.$ As a result,  
as $r,\vartheta,a\to0,$ we have that $II_{4,2}\to0.$

For  the term $II_{4,3},$ we have the estimate \begin{align*}
&\quad \|UD_x\phi_{R,0}(\bar\rho^{\vartheta,r}_0,\bar x^{\vartheta,r}_0)-\widetilde UD_y\phi_{R,0}(\bar\mu^{\vartheta,r}_0,\bar y^{\vartheta,r}_0)\|\\
&\leq \|(U(\bar\rho^{\vartheta,r}_0,\bar x^{\vartheta,r}_0)-\widetilde U(\bar\mu^{\vartheta,r}_0,\bar y^{\vartheta,r}_0))D_x\phi_{R,0}(\bar\rho^{\vartheta,r}_0,\bar x^{\vartheta,r}_0)\|\\&+\widetilde U(\bar\mu^{\vartheta,r}_0,\bar y^{\vartheta,r}_0)\|D_x\phi_{R,0}(\bar\rho^{\vartheta,r}_0,\bar x^{\vartheta,r}_0)-D_y\phi_{R,0}(\bar\mu^{\vartheta,r}_0,\bar y^{\vartheta,r}_0)\|=:II_{4,3,1}+II_{4,3,2}.
\end{align*}
For the term $II_{4,3,2},$ by virtue of the boundedness of $\widetilde U,$ the smoothness of $\phi_{R,0}$, we have 
\begin{align*}
II_{4,3,2}\leq C_R(\|\bar\rho^{\vartheta,r}_0-\bar\mu^{\vartheta,r}_0\|+\|\bar x^{\vartheta,r}_0-\bar y^{\vartheta,r}_0\|)\leq C_R(\|\bar\rho-\bar\mu\|+\|\bar x-\bar y\|)\text{ as }r,\vartheta\to0,
\end{align*}
where in the last step we also use \eqref{zlimit}, \eqref{zlimit2} and \eqref{relation-twopoint}. 
Further using \eqref{max-relation1} leads to $II_{4,3,2}\to0$ as  $\delta,\gamma\to0$. 

For the term $II_{4,3,1},$ letting $r,\vartheta\to0,$ it can be estimated as \begin{align*}
|U-\widetilde U|(\bar \mu,\bar y) \|D_y\phi_{R,0}(\bar\mu,\bar y)\|\leq \frac{C}{R}\|D_y\mathcal H_0(\bar\mu,\bar y)\|\mathbf 1_{\{\mathcal H_0(\bar\mu,\bar y)\in (R,2R)\}}\to0\text{ as }R\to\infty,
\end{align*} 
where we use Assumption \ref{ass_domi} and property of $\phi_R'$ \eqref{phi-property2}. 

Combining estimates of $II_{4,1},II_{4,2}$ and $II_{4,3},$ we have  
\begin{align}\label{conclu-4}II_4\to0\text{ as }r,\vartheta,\delta,\gamma,a\to0,\text{ and }R\to\infty.
\end{align}

\textbf{Estimate of the term $II_5.$} It follows from   \eqref{equality-x} and \eqref{equality-y}  that 
\begin{align*}
&II_5\leq \Big\|\frac{D_x\phi_{R,0}}{\phi_{R,0}}(\bar\rho^{\vartheta,r}_0,\bar x^{\vartheta,r}_0)\Big\| \| D_x(U\phi_{R,0})^{\vartheta}(\bar\rho^{\vartheta,r},\bar x^{\vartheta,r})-D_y(\widetilde U\phi_{R,0})_{\vartheta}(\bar\mu^{\vartheta,r},\bar y^{\vartheta,r})\|\\&+\|D_y(\widetilde U\phi_{R,0})_{\vartheta}(\bar\mu^{\vartheta,r},\bar y^{\vartheta,r})\|\Big\| \frac{D_x\phi_{R,0}}{\phi_{R,0}}( \bar\rho^{\vartheta,r}_0,\bar x^{\vartheta,r}_0)- \frac{D_y\phi_{R,0}}{\phi_{R,0}}( \bar\mu^{\vartheta,r}_0,\bar y^{\vartheta,r}_0)\Big\|\\&
\leq C_R\Big[\|D_x(U\phi_{R,0})^{\vartheta}(\bar\rho^{\vartheta,r},\bar x^{\vartheta,r})-D_y(\widetilde U\phi_{R,0})_{\vartheta}(\bar\mu^{\vartheta,r},\bar y^{\vartheta,r})\|\\&+ \|D_y(\widetilde U\phi_{R,0})_{\vartheta}(\bar\mu^{\vartheta,r},\bar y^{\vartheta,r})\|(\|\bar\rho^{\vartheta,r}_0-\bar \mu^{\vartheta,r}_0\|+\|\bar x^{\vartheta,r}_0-\bar y^{\vartheta,r}_0\|) \Big]\\
&\leq C_R\Big[a(\|\bar x^{\vartheta,r}\|+\|\bar y^{\vartheta,r}\|+p_0+p_1)\\&
+(\frac{\|\bar x^{\vartheta,r}-\bar y^{\vartheta,r}\|}{\gamma}+2a\|\bar y^{\vartheta,r}\|+p_1)(\|\bar\rho^{\vartheta,r}_0-\bar \mu^{\vartheta,r}_0\|+\|\bar x^{\vartheta,r}_0-\bar y^{\vartheta,r}_0\|) \Big],
\end{align*}
where in the second inequality we use the positivity and the smoothness of $\phi_{R,0}$ (see \eqref{phi-property}, \eqref{phi-property2}), as well as the boundedness of points $(\bar\rho^{\vartheta,r}_0,\bar x^{\vartheta,r}_0), (\bar\mu^{\vartheta,r}_0,\bar y^{\vartheta,r}_0)$ (see  \eqref{bounded-xy2} and \eqref{bounded-xy3}).  
Taking $r,\vartheta\to0$ yields 
\begin{align*}
II_5&\to C_R \Big[a(\|\bar x\|+\|\bar y\|)+\frac{\|\bar x-\bar y\|^2+\|\bar \rho-\bar\mu\|^2}{\gamma}\Big].\end{align*} 
 Then according to \eqref{max-relation1} and \eqref{max-relation2}, further  letting $\gamma,a\to0$ derives 
 \begin{align}\label{conclu-5}
 II_5\to0\text{ as }r,\vartheta,\gamma,a\to0.
 \end{align}

\textbf{Estimate of the term $II_6.$} 
Recalling properties of  $\phi_R,\phi_R',\phi''_R$ in \eqref{phi-property} and  \eqref{phi-property2}, and letting $r,\vartheta\to0$, we have the following upper bound estimate \begin{align*}
II_{6}&
\leq C\mathbf 1_{\{\mathcal H_0(\bar\rho,\bar x)\in (R,2R)\}}\Big[\frac{1}{R} \|D^2_x\mathcal H_0(\bar\rho,\bar x)\|+\frac{1}{R^2}\|D_x\mathcal H_0(\bar\rho,\bar x)\|^2+\frac{R^{\beta}}{R^2}\|D_x\mathcal H_0(\bar\rho,\bar x)\|^2\Big]\\
&+C\mathbf 1_{\{\mathcal H_0(\bar\mu,\bar y)\in (R,2R)\}}\Big[\frac{1}{R} \|D^2_y\mathcal H_0(\bar\mu,\bar y)\|+\frac{1}{R^2}\|D_y\mathcal H_0(\bar\mu,\bar y)\|^2+\frac{R^{\beta}}{R^2}\|D_y\mathcal H_0(\bar\mu,\bar y)\|^2\Big],
\end{align*}
where we use the boundedness of viscosity solutions $U,\widetilde U.$ 
 By Assumption \ref{ass_domi}, taking $R\to\infty$ yields 
 that \begin{align}\label{conclu-6}II_{6}\to0\text{ as }r,\vartheta\to0,\text{ and }R\to\infty.
\end{align}

Plugging \eqref{left-estimate} into \eqref{estimate-main}, we derive 
\begin{align*}
\frac{b_0\sigma_0}{T}-q_0-q_1\leq \frac{b_0\sigma_0}{T}-q_0-q_1+d(\frac{1}{(\bar t^{\vartheta,r})^2}+\frac{1}{(\bar s^{\vartheta,r})^2})= \sum_{i=1}^6II_i. 
\end{align*}
Taking limits $r,\vartheta,\delta,\gamma,a\to0$ and $R\to\infty,$ combining the property $|q_0|+|q_1|\to0$ as $r\to0$ (see \eqref{bounded-pq}), and estimates of terms $II_{1}-II_6$ (see \eqref{conclu-1}--\eqref{conclu-6}), we have  $0<\frac{b_0\sigma_0}{T}\leq 0,$  which leads to a contradiction.  
\end{proof}

\section{appendix}\label{app1} 
In this section, we present the proofs of Propositions \ref{exact} and \ref{dynamic-pro}, and  Lemma \ref{coro2_local}, along with several useful results from convex analysis. 
\subsection{Proof of Proposition \ref{exact}}
\begin{proof}
We first use the truncation and stopping time skill to prove the well-posedness of the solution of the SWHS \eqref{WHflow1}--\eqref{WHflow2}. Then we show the moment estimate  \eqref{H_0estimate} and regularity estimate \eqref{moment-cont-1} for $\mathcal H^{\mathbb V}_0(\rho(t),S(t)),t\in[t_0,T]$.  

Recall the definition of $\mathcal H_1$ in \eqref{def-H1}. 
For $M\in\mathbb N_+,$ define the truncated system
\begin{align}\label{truncated-sde}
\begin{cases}
\mathrm d\rho^M(t)=D_{ S}\mathcal H^{\mathbb V}_0(\rho^M(t),S^M(t))\psi_{M,0}( \rho^M(t),S^M(t))\mathrm dt,\\ 
\mathrm dS^M(t)=-D_{ \rho}\mathcal H^{\mathbb V}_0(\rho^M(t),S^M(t))\psi_{M,0}( \rho^M(t),S^M(t))\mathrm dt-\sigma \mathrm dW(t), 
\end{cases}
\end{align}  
where $t\in(t_0,T],\,\rho^M(t_0)=\rho,S^M(t_0)=x,$ and the truncation function 
$
\psi_{M,0}(\rho,x):=\psi_{M}(\mathcal H_0(\rho,x))
$ with $\psi_M$ being a nonincreasing real function that satisfies 
\begin{align*}
\psi_M(r)=1\quad \text{if }r\leq M,\quad \psi_M(r)=0\quad \text{if }r\ge 2M.
\end{align*} 
According to Assumption \ref{energy} and Definition \ref{def-functions},  by the locally Lipschitz continuity of $\mathcal H_0$ and partial derivatives, for each fixed $M\in\mathbb N_+,$ the coefficients of the truncated system \eqref{truncated-sde} are globally Lipschitz continuous. Thus for each fixed $M\in\mathbb N_+,$ by the standard arguments (see e.g. \cite[Chapter 5]{Fried}),  \eqref{truncated-sde} admits a unique solution $(\rho^M(t),S^M(t))_{t\in[t_0,T]}$.   

Next, we define a sequence of nondecreasing stopping times $$\tau_M:=\inf\{t>0:\mathcal H_0(\rho(t),S(t))\ge M\},\quad n\in\mathbb N_+.$$ And set $\tau_{\infty}:=\lim_{M\to\infty}\tau_M.$ Note that the coefficients of the truncated system \eqref{truncated-sde} are consistent with those in \eqref{WHflow1}--\eqref{WHflow2} when $\mathcal H_0(\rho,x)\leq M,$ due to that $\psi_{M,0}(\rho,x)=1$ when $\mathcal H_0(\rho,x)\leq M.$  This means that for $t\leq \tau_M,$ the solution $(\rho^M(t),S^M(t))$ satisfies the original system \eqref{WHflow1}--\eqref{WHflow2}. As a result, we can define the local solution $(\rho ,S)$ up to the stopping time $\tau_{\infty}$ by $(\rho ,S)=(\rho^M,S^M)$ on $\{t\leq\tau_M\}.$

To get the global existence of the solution, it suffices to show that $\lim_{M\to\infty}\tau_M\wedge T=T$ a.s. This can be guaranteed by  
\begin{align}\label{estimate-proof}
\sup_{M\in\mathbb N_+}
\mathbb E\Big[ \sup_{
s\in [0,\tau_M\wedge T]}
\mathcal H^{\mathbb V}_0(\rho (s),S(s))\Big] <\infty.
\end{align}
In fact, by \eqref{estimate-proof} we have 
\begin{align*}
\mathbb E\Big[\mathcal H^{\mathbb V}_0(\rho^M(\tau_M),S^M(\tau_M))\mathbf 1_{\{\tau_M<T\}}\Big]\leq C<\infty.
\end{align*} 
It follows from $\lim_{M\to\infty}\mathcal H^{\mathbb V}_0(\rho^M(\tau_M),S^M(\tau_M))=+\infty$ that  $\lim_{M\to\infty}\mathbb P(\tau_M<T)=0,$ which implies $\lim_{M\to\infty}\tau_M\wedge T=T$ a.s. We refer to e.g. \cite[Theorem 10.2]{HYZ} for further details. 

It remains to prove \eqref{estimate-proof}. In fact, we will establish a stronger result, namely \eqref{H_0estimate}.
Applying the It\^o formula to $\mathcal H^{\mathbb V}_0(\rho(t),S(t)),$ and noting that $\mathcal H_1$ given by \eqref{def-H1} is independent of the $x$ variable, we obtain 
\begin{align}\label{ito-2}
&\quad \mathcal H^{\mathbb V}_0(\rho(t),S(t))\notag\\
&=\mathcal H^{\mathbb V}_0(\rho,x)-\int_{t_0}^tD_{ S}\mathcal H^{\mathbb V}_0(\rho(s),S(s))^{\top}\sigma\mathrm dW(s)+\int_{t_0}^t\frac12\mathrm{tr}(\sigma\sigma^{\top}D^2_S\mathcal H^{\mathbb V}_0(\rho(s),S(s)))\mathrm ds.
\end{align}  
By the 
maximal-type Burkholder inequality \cite[Proposition  3.15]{BDG} and the condition \eqref{bound_prop}, we derive that for any $p\ge 2,$
\begin{align*}
&\quad \mathbb E\Big[\sup_{t\in[t_0,T]}|\mathcal H^{\mathbb V}_0(\rho(t),S(t))|^p\Big]\\
&\leq C(p,T)\mathbb E\Big[1+|\mathcal H^{\mathbb V}_0(\rho,x)|^p+\Big(\int_{t_0}^T\|D_{ S}\mathcal H^{\mathbb V}_0(\rho(s),S(s))\|^2\mathrm ds\Big)^{\frac p2}+\int_{t_0}^T|\mathcal H^{\mathbb V}_0(\rho(s),S(s))|^p\mathrm ds\Big]\\
&\leq C(p,T)\mathbb E\Big[1+|\mathcal H^{\mathbb V}_0(\rho,x)|^p+\int_{t_0}^T\sup_{r\in[0,s]}|\mathcal H^{\mathbb V}_0(\rho(r),S(r))|^p\mathrm ds\Big],
\end{align*}
where in the last step we use the H\"older inequality. By means of the Gr\"onwall inequality we finish the proof of \eqref{H_0estimate}. 

 Moreover, utilizing the maximal-type Burkholder inequality and the condition \eqref{bound_prop}  again, we obtain from \eqref{ito-2} that for any $p\ge2,$
\begin{align*}
&\quad \mathbb E\Big[\sup_{r\in[t_0,t]}|\mathcal H^{\mathbb V}_0(\rho(r),S(r))-\mathcal H^{\mathbb V}_0(\rho,x)|^p\Big]\\
&\leq C\mathbb E\Big[1+\Big(\int_{t_0}^t\sup_{s\in[t_0,t]}|\mathcal H^{\mathbb V}_0(\rho(s),S(s))|^2\mathrm ds\Big)^{\frac p2}+\Big(\int_{t_0}^t \sup_{s\in[t_0,t]}|\mathcal H^{\mathbb V}_0(\rho(s),S(s))|\mathrm ds\Big)^p\Big].
\end{align*}
By means of \eqref{H_0estimate} we derive that 
\begin{align*}
 \mathbb E\Big[\sup_{r\in[t_0,t]}|\mathcal H^{\mathbb V}_0(\rho(r),S(r))-\mathcal H^{\mathbb V}_0(\rho,x)|^p\Big]\leq C(t-t_0)^{\frac p2},
\end{align*}
which completes the proof of \eqref{moment-cont-1}. 
\end{proof}

\subsection{Proof of Proposition \ref{dynamic-pro}} 
\begin{proof}
The proof is split into two steps. 

\textit{Step 1:} We first prove the upper bound: for every $\mathbb V(\cdot)\in\mathscr V_{\ell}[t,T]$ and $0<t<\bar t\leq T,$
\begin{align}\label{leq-inequalityU}
U(t,\rho,x) \leq \inf_{\mathbb V\in\mathscr V_{\ell}[t,T]}\mathbb E_{t} \Big[ \int_t^{\bar t} F(s,\rho(s),S(s),\mathbb V(s))  ds + U(\bar t, \rho(\bar t),S(\bar t)) \Big].
\end{align}
By the definitions of $\mathcal J$ and $U$ (see \eqref{def-costfunction} and \eqref{def-valuefunction}), for any $\mathbb V(\cdot)\in\mathscr V_{\ell}[t,T]$,
\begin{align}
&\quad U(t,\rho,x) \leq \mathcal J(t,\rho,x;\mathbb V) \notag \\
&= \mathbb E_{t} \Big[ \int_t^{\bar t} F(s,\rho(s),S(s),\mathbb V(s))  ds + \int_{\bar t}^T F(s,\rho(s),S(s),\mathbb V(s))  ds + h(\rho(T),S(T)) \Big] \notag \\
&=\mathbb E_{t} \Big[ \int_t^{\bar t} F(s,\rho(s),S(s),\mathbb V(s))  ds + \mathbb E_{\bar t}\Big[\int_{\bar t}^T F(s,\rho(s),S(s),\mathbb V(s))  ds + h(\rho(T),S(T)) \Big]\Big] 
\notag\\
&= \mathbb E_{t} \Big[ \int_t^{\bar t} F(s,\rho(s),S(s),\mathbb V(s))  ds + \mathcal J(\bar t,\rho(\bar t),S(\bar t);\mathbb V|_{[\bar t,T]}) \Big],\label{Ueq11}
\end{align}
where $\mathbb V|_{[\bar t,T]}$ denotes the restriction of $\mathbb V$ to $[\bar t,T]$.  
Following the proof of \cite[Lemma 4.4]{YongJM}, there exists a minimizing sequence $\{\mathbb V_k\} \subset \mathscr V_{\ell}[\bar t,T]$ such that 
\[
\mathcal J(\bar t,\rho(\bar t),S(\bar t);\mathbb V_k) \to U(\bar t,\rho( \bar t),S(\bar t)) \quad \text{a.s. as } k \to \infty.
\]
By the reverse Fatou lemma, 
\begin{align*}
\limsup_{k\to\infty} \mathbb E_t [ \mathcal J(\bar t,\rho(\bar t),S(\bar t);\mathbb V_k) ] &\leq \mathbb E_{t} [ \limsup_{k\to\infty} \mathcal J(\bar t,\rho(\bar t),S(\bar t);\mathbb V_k) ] \\
&= \mathbb E_t [ U(\bar t,\rho(\bar t),S(\bar t)) ],
\end{align*}
where we use the condition that $\mathbb V_k$ is uniformly bounded by $\ell$ and  functionals $F,h$ are uniformly bounded with respect to $\mathbb V$ (see Assumption \ref{ass1}). 
Taking the infimum over $\mathbb V|_{[\bar t,T]} \in \mathscr V_{\ell}[\bar t,T]$ in \eqref{Ueq11} and use the relation $\inf_{k\in A_1}f_k\leq \inf_{k\in A_2}f_k$ for two index sets $A_2\subset A_1$ yields
\begin{align*}
U(t,\rho,x) &\leq \mathbb E_{t} \Big[ \int_t^{\bar t} F(s,\rho(s),S(s),\mathbb V(s))  ds\Big]+\inf_{k\in\mathbb N}\mathbb E_t[\mathcal J(\bar t,\rho(\bar t),S(\bar t);\mathbb V_k)]\\
&\leq \mathbb E_{t} \Big[ \int_t^{\bar t} F(s,\rho(s),S(s),\mathbb V(s))  ds\Big]+\limsup_{k\to\infty}\mathbb E_t[\mathcal J(\bar t,\rho(\bar t),S(\bar t);\mathbb V_k)]\\
&\leq 
\mathbb E_{t} \Big[ \int_t^{\bar t} F(s,\rho(s),S(s),\mathbb V(s))  ds + U(\bar t,\rho(\bar t),S(\bar t)) \Big].
\end{align*}
This gives \eqref{leq-inequalityU} by taking the infimum over all admissible controls. 

\textit{Step 2:} Next, we prove the lower bound: for every $\epsilon>0,$ there exists $\mathbb V^{\epsilon} \in \mathscr V_{\ell}[t,T]$ such that  
\begin{align}\label{ge-inequalityU}
U(t,\rho,x) \geq \inf_{\mathbb V\in\mathscr V_{\ell}[t,T]}\mathbb E_{t} \Big[ \int_t^{\bar t} F(s,\rho(s),S(s),\mathbb V(s))  ds + U(\bar t,\rho(\bar t),S(\bar t)) \Big]. 
\end{align} 
By the definitions of $\mathcal J$ and $U$, for every $\epsilon>0$, there exists $\mathbb V^{\epsilon} \in \mathscr V_{\ell}[t,T]$ such that 
\begin{align*}
&U(t,\rho,x) + \epsilon > \mathcal J(t,\rho,x;\mathbb V^{\epsilon}) \\
&= \mathbb E_{t} \Big[ \int_t^{\bar t} F(s,\rho^{\mathbb V^{\epsilon}}(s),S^{\mathbb V^{\epsilon}}(s),\mathbb V^{\epsilon}(s))  ds \\
&+\int_{\bar t}^T F(s,\rho^{\mathbb V^{\epsilon}}(s),S^{\mathbb V^{\epsilon}}(s),\mathbb V^{\epsilon}(s))  ds + h(\rho^{\mathbb V^{\epsilon}}(T),S^{\mathbb V^{\epsilon}}(T)) \Big] \\
&= \mathbb E_{t} \Big[ \int_t^{\bar t} F(s,\rho^{\mathbb V^{\epsilon}}(s),S^{\mathbb V^{\epsilon}}(s),\mathbb V^{\epsilon}(s))  ds + \mathcal J(\bar t,\rho^{\mathbb V^{\epsilon}}(\bar t),S^{\mathbb V^{\epsilon}}(\bar t);\mathbb V^{\epsilon}|_{[\bar t,T]}) \Big] \\
&\geq \mathbb E_{t} \Big[ \int_t^{\bar t} F(s,\rho^{\mathbb V^{\epsilon}}(s),S^{\mathbb V^{\epsilon}}(s),\mathbb V^{\epsilon}(s))  ds + U(\bar t, \rho^{\mathbb V^{\epsilon}}(\bar t),S^{\mathbb V^{\epsilon}}(\bar t)) \Big]\\
&\geq \inf_{\mathbb V\in\mathscr V_{\ell}[t,T]} \mathbb E_t \Big[ \int_t^{\bar t} F(s,\rho^{\mathbb V}(s),S^{\mathbb V}(s),\mathbb V(s))  ds + U(\bar t, \rho^{\mathbb V}(\bar t),S^{\mathbb V}(\bar t)) \Big],
\end{align*}
where $(\rho^{\mathbb V^{\epsilon}},S^{\mathbb V^{\epsilon}})$ solves the SWHS with the control $\mathbb V^{\epsilon}$. Then putting $\epsilon\to0$ finishes the proof of \eqref{ge-inequalityU}.  
\end{proof}

\subsection{Proof of Lemma \ref{coro2_local}}  
\begin{proof} We only  show the case that $\widehat U$ is a viscosity subsolution of \eqref{HJB_R} based on Definition \ref{def1} (i). The case of viscosity supersolution can be proved similarly and thus is omitted.

According to Definition \ref{def1} (i),  suppose that $\hat\varphi\in\mathcal C^{1,1,2}(\mathcal A_R)$ and $\widehat U-\hat\varphi$ has a maximum at $(t_0,\rho_0,x_0)\in\mathcal A_R$ satisfying $(\widehat U-\hat\varphi)(t_0,\rho_0,x_0)=0.$ We then aim to verify the viscosity subsolution inequality. Set $\varphi=\hat\varphi/\phi_{R,0}.$  Then $\varphi\in\mathcal C^{1,1,2}(\mathcal A_R)$ and  $$0=[(U-\varphi)\phi_{R,0}](t_0,\rho_0,x_0)\ge [(U-\varphi)\phi_{R,0}](t,\rho,x).$$ Due to $\phi_{R,0}>0$ we arrive at $0=(U-\varphi)(t_0,\rho_0,x_0)\ge (U-\varphi)(t,\rho,x),$ which means that $U-\varphi$ has a maximum at $(t_0,\rho_0,x_0)\in \mathcal A_R.$  Since $U$ is a viscosity subsolution of \eqref{HJB}, we have 
\begin{align*}
\partial_t\varphi(t_0,\rho_0,x_0)+H(t_0, \rho_0, x_0, \partial_{\rho} \varphi(t_0, \rho_0, x_0), D_x \varphi(t_0, \rho_0, x_0), D^2_x \varphi(t_0, \rho_0, x_0)) \ge 0.
\end{align*} 
Noticing $\hat\varphi=\varphi\phi_{R,0},$ and using the similar argument as that of \eqref{phi-multiply} and \eqref{HRU-def}, where $U$ is replaced by the test function $\varphi,$ give that 
\begin{align*}
\partial_t\hat\varphi(t_0, \rho_0, x_0)+H^R_{\varphi}(t_0, \rho_0, x_0, \partial_{\rho} \hat\varphi(t_0, \rho_0, x_0), D_x \hat\varphi(t_0, \rho_0, x_0), D^2_x \hat\varphi(t_0, \rho_0, x_0))\ge 0.
\end{align*} 
Since $\widehat U(t_0,\rho_0,x_0)=\hat\varphi(t_0,\rho_0,x_0),$ where $\widehat U=U\phi_{R,0}$ and $\hat \varphi=\varphi\phi_{R,0},$ we have $ U(t_0,\rho_0,x_0)=\varphi(t_0,\rho_0,x_0)$  and 
\begin{align*}
&\quad H^R_{\varphi}(t_0, \rho_0, x_0, \partial_{\rho} \hat \varphi(t_0, \rho_0, x_0), D_x \hat\varphi(t_0, \rho_0, x_0), D^2_x \hat\varphi(t_0, \rho_0, x_0))\\
&=H^R_U(t_0, \rho_0, x_0, \partial_{\rho} \hat\varphi(t_0, \rho_0, x_0), D_x \hat\varphi(t_0, \rho_0, x_0), D^2_x \hat\varphi(t_0, \rho_0, x_0)).
\end{align*}
Hence $\widehat U$ is a viscosity subsolution of \eqref{HJB_R} in $\mathcal A_R.$ 
\end{proof} 

\subsection{Some useful lemmas}
The following lemma shows that semiconvex and semiconcave functions are twice differentiable almost everywhere. 

\begin{lemma}
 \label{Alex}[Alexandrov's  Theorem \cite{Alex39}]
 Let $\mathbb Q\subset\mathbb R^d$ be a convex domain and $\psi:\mathbb Q\to\mathbb R$ be a semiconvex (or semiconcave) function, where $d$ is a positive integer. Then there exists a set  $\mathbb Q_1\subset \mathbb Q$ with zero Lebesgue measure such that at any $x\in \mathbb Q\backslash \mathbb Q_1$, $\psi$ is twice differentiable, i.e., there
are $p\in\mathbb R^d$ and a symmetric matrix $P$ such that 
\begin{align*}
\psi(x+y)=\psi(x)+p^{\top}y+\frac12y^{\top}Py+o(\|y\|^2),
\end{align*}
for all $\|y\|$ small enough. 
\end{lemma}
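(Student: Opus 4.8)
The plan is to reduce to the convex case and then run the measure-theoretic proof of Alexandrov's theorem. First I would reduce to $\psi$ convex: by the definition of semiconvexity there is $K>0$ with $\phi(x):=\psi(x)+K\|x\|^2$ convex, and since $x\mapsto K\|x\|^2$ is $C^{\infty}$, $\psi$ is twice differentiable at a point exactly when $\phi$ is; the semiconcave case follows by applying the result to $-\psi$. As convexity and twice differentiability are local, it suffices to work on each relatively compact convex subdomain, so without loss of generality $\psi$ is a finite convex function on $\mathbb Q$. Such a $\psi$ is locally Lipschitz, hence differentiable off a Lebesgue-null set $\mathbb Q_0$ by Rademacher's theorem; write $\nabla\psi$ for its a.e.\ gradient, and recall that $\partial\psi(x)$ is a nonempty compact convex set for each $x$, equal to $\{\nabla\psi(x)\}$ when $x\notin\mathbb Q_0$. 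Next, the distributional second derivatives $(\partial^2_{ij}\psi)_{i,j}$ form a symmetric matrix-valued Radon measure $D^2\psi$ on $\mathbb Q$, which convexity forces to be nonnegative in the sense that $\xi^{\top}D^2\psi\,\xi\ge0$ as a scalar measure for every $\xi\in\mathbb R^d$. By the Lebesgue--Radon--Nikodym decomposition with respect to Lebesgue measure I write $D^2\psi=M(x)\,dx+\nu$ with $M\in L^1_{\mathrm{loc}}$ symmetric and nonnegative a.e.\ and $\nu$ singular, and I let $\mathbb Q_1$ be the complement in $\mathbb Q$ of the set $A$ of points $x$ that simultaneously (i) lie in $\mathbb Q\setminus\mathbb Q_0$, (ii) are Lebesgue points of $M$, and (iii) satisfy $r^{-d}\,|\nu|(B_r(x))\to0$ as $r\to0$. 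By the Lebesgue differentiation theorem together with the standard density estimate for measures singular with respect to Lebesgue measure, $A$ has full measure, so $\mathbb Q_1$ is Lebesgue-null; this is the exceptional set claimed.

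Fix $x_0\in A$ and set $L:=M(x_0)$; the goal is the expansion $\psi(x_0+h)=\psi(x_0)+\nabla\psi(x_0)\cdot h+\tfrac12 h^{\top}Lh+o(\|h\|^2)$. Let $\psi_\varepsilon:=\psi*\eta_\varepsilon$ be standard mollifications: each $\psi_\varepsilon$ is smooth and convex, $\nabla\psi_\varepsilon\to\nabla\psi$ pointwise a.e., and $D^2\psi_\varepsilon\,dx\to D^2\psi$ weakly-$*$. Using convexity of $\psi_\varepsilon$ (i.e.\ monotonicity of $\nabla\psi_\varepsilon$) and the fundamental theorem of calculus along segments, the second-order remainder of $\psi_\varepsilon$ on $B_r(x_0)$ is controlled by $\int_{B_{2r}(x_0)}\|D^2\psi_\varepsilon(x)-L\|\,dx$; passing $\varepsilon\to0$ and invoking (ii)--(iii) gives the averaged bound
\begin{align*}
\frac{1}{|B_r(x_0)|}\int_{B_r(x_0)}\sup_{p\in\partial\psi(x)}\big\|\,p-\nabla\psi(x_0)-L(x-x_0)\,\big\|\,dx\ \le\ \omega(r)\,r,
\end{align*}
with $\omega(r)\to0$ as $r\to0$. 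It then remains to upgrade this $L^1$-averaged control of the subgradient to the pointwise second-order expansion. Here I would use that, by convexity, every subgradient yields a global supporting affine minorant $\psi(y)\ge\psi(x)+p\cdot(y-x)$, and $\psi$ is squeezed between such supports and its secant hyperplanes; combining the resulting one-sided inequalities with the averaged estimate and a Vitali covering that selects ``good'' points of $A$ along each ray issuing from $x_0$ produces both the upper and the lower second-order bounds, so $\psi$ is twice differentiable at $x_0$ with Hessian $L$. Undoing the first-paragraph reduction, the original $\psi$ is twice differentiable on $\mathbb Q\setminus\mathbb Q_1$, which completes the proof.

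The genuinely delicate step is the last one: $L^1$-differentiability of $\nabla\psi$ at a Lebesgue point does not by itself yield classical second-order differentiability of $\psi$, so one must exploit the rigidity special to gradients of convex functions --- the trapping of difference quotients between a supporting hyperplane and a secant hyperplane --- together with a covering argument to reach subgradients in every direction. An alternative that sidesteps part of this is to pass to the maximal monotone operator $T=\mathrm{Id}+\partial\psi$, whose resolvent $J=(\mathrm{Id}+\partial\psi)^{-1}$ is single-valued and $1$-Lipschitz on $\mathbb R^d$, apply Rademacher's theorem to $J$, and transfer differentiability back through $\partial\psi=J^{-1}-\mathrm{Id}$ by an inverse-function argument for monotone maps; that route instead concentrates the difficulty in the set where $DJ$ degenerates. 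In either formulation, monotonicity of the subdifferential is the essential structural input, and making the passage from averaged to pointwise statements rigorous is where the bulk of the work lies.
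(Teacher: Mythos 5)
You should note first that the paper does not prove this lemma at all: it is quoted as Alexandrov's classical theorem with the citation \cite{Alex39} and used as a black box in Step 2 of the uniqueness proof, so the benchmark for your attempt is the classical literature (Alexandrov's original argument, the measure-theoretic proof in Evans--Gariepy, or Mignot's proof via the resolvent of the maximal monotone operator $\partial\psi$), not an in-paper argument. Measured against that, your setup is correct and is exactly the standard route: the reduction of the semiconvex/semiconcave case to the convex case by adding $K\|x\|^2$, localization, Rademacher for the a.e.\ first derivative, the identification of $D^2\psi$ as a nonnegative symmetric matrix-valued Radon measure, the Lebesgue--Radon--Nikodym splitting $D^2\psi=M\,dx+\nu$, and the choice of the full-measure set of points that are Lebesgue points of $M$, density points for the vanishing of $\nu$, and points of differentiability of $\psi$.

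The genuine gap is the final step, which is the actual content of Alexandrov's theorem: upgrading the $L^1$-averaged estimate on $\partial\psi$ near $x_0$ to the pointwise expansion $\psi(x_0+h)=\psi(x_0)+p^{\top}h+\tfrac12 h^{\top}Lh+o(\|h\|^2)$. Your proposal leaves this at the level of a description (``squeezed between supporting hyperplanes and secants \ldots a Vitali covering that selects good points along each ray''), which is not a proof, and the ray-by-ray covering picture is not the mechanism that makes the standard argument work: there the averaged gradient estimate is first integrated along segments to give $\frac{1}{|B_r(x_0)|}\int_{B_r(x_0)}|h(y)|\,dy=o(r^2)$ for the second-order remainder $h$, and then one needs a quantitative $L^\infty$-from-$L^1$ bound, valid because $h$ is a convex function minus the fixed quadratic $\tfrac12(\cdot-x_0)^{\top}L(\cdot-x_0)$ (so semiconvex with constant $\|L\|$), to pass to $\sup_{B_{r/2}(x_0)}|h|=o(r^2)$; establishing that sup bound is the delicate page of work you have not supplied. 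Your alternative through $J=(\mathrm{Id}+\partial\psi)^{-1}$ (Mignot's proof) is a legitimate complete route, but there too the transfer of a.e.\ differentiability of the $1$-Lipschitz map $J$ back to $\partial\psi$, including the set where $DJ$ fails to be invertible, must actually be carried out rather than indicated. Since the lemma is cited rather than proved in the paper, the appropriate resolution is either to cite a source containing the full argument or to write out the $L^\infty$--$L^1$ step explicitly; as it stands your proposal is an accurate outline of the known proof with its central step missing.
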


Next, we present  Jensen's lemma, 
which is fundamental in the viscosity solution theory, as it provides a tool to extract information about second-order derivatives for the functions at points of local maximum.

\begin{lemma}\label{Jensen}[Jensen's lemma \cite{Jensen88}] Let $\mathbb Q\subset \mathbb R^d$ be a convex domain, $\Psi:\mathbb Q\to\mathbb R$ be semiconvex, and  let  $z_0\in \mathbb Q^{\circ}$ be a local strict maximizer of $\Psi$, where $\mathbb Q^{\circ}$ denotes the interior of $\mathbb Q.$ Then  for
any small $r,\delta>0$, the set
\begin{align*}
\mathcal K:=\{z\in B_r(z_0):\exists \|p\|\leq \delta\;\text{such that } \Psi(\cdot)-p^{\top}(\cdot-z_0)\text{ attains a  local maximum at }z\}
\end{align*} 
has a positive Lebesgue measure.
\end{lemma}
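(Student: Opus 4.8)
\textbf{Proof proposal for Lemma \ref{Jensen}.} The plan is to run the classical mollification–plus–area–formula argument, whose driving observation is that semiconvexity forces the (Alexandrov) Hessian of $\Psi$ to lie between $-\lambda I$ and $0$ at every point that maximizes a linear perturbation of $\Psi$. After a translation I may assume $z_0=0$, and, shrinking $r$ if necessary, that $\overline{B_r}\subset\mathbb Q$ and $\Psi(0)>\Psi(x)$ for all $x\in\overline{B_r}\setminus\{0\}$; it suffices to bound the measure of $\mathcal K$ for this smaller $r$. Set $\mu:=\Psi(0)-\max_{|x|=r}\Psi(x)>0$ and $\rho_0:=\min\{\delta,\mu/(3r)\}$, and pick $\lambda>0$ with $\Psi+\tfrac\lambda2|\cdot|^2$ convex, i.e. $D^2\Psi\ge-\lambda I$ in the distributional sense. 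Mollify: $\Psi^\epsilon:=\Psi*\eta_\epsilon\in C^\infty$ on a slightly smaller ball, so that $\Psi^\epsilon\to\Psi$ uniformly on $\overline{B_r}$ while $D^2\Psi^\epsilon\ge-\lambda I$ pointwise. Fixing $\epsilon$ small enough that $\|\Psi^\epsilon-\Psi\|_{C^0(\overline{B_r})}<\mu/4$, one checks that for every $p$ with $|p|\le\rho_0$ the perturbed function $\Psi^\epsilon(\cdot)-p^\top(\cdot)$ satisfies, on $|x|=r$,
\[
\Psi^\epsilon(x)-p^\top x\le \Psi(0)-\tfrac34\mu+\rho_0 r<\Psi(0)-\tfrac14\mu\le\Psi^\epsilon(0),
\]
hence attains its maximum over $\overline{B_r}$ at an interior point. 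Let $\widetilde{\mathcal K}^\epsilon$ be the set of all such interior maximizers as $p$ runs over $\overline{B_{\rho_0}}$; it is a closed subset of $B_r$ (closedness follows by extracting limits of maximizers and of the corresponding $p$'s), at each of its points one has $\nabla\Psi^\epsilon=p$ and $D^2\Psi^\epsilon\le0$, and $\nabla\Psi^\epsilon(\widetilde{\mathcal K}^\epsilon)\supseteq\overline{B_{\rho_0}}$.

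On $\widetilde{\mathcal K}^\epsilon$ the bounds $-\lambda I\le D^2\Psi^\epsilon\le0$ give $|\det D^2\Psi^\epsilon|\le\lambda^d$. Applying the elementary change–of–variables inequality $|G(E)|\le\int_E|\det DG|$ to the $C^\infty$ map $G=\nabla\Psi^\epsilon$ with $E=\widetilde{\mathcal K}^\epsilon$ yields
\[
|B_{\rho_0}|\le|\nabla\Psi^\epsilon(\widetilde{\mathcal K}^\epsilon)|\le\int_{\widetilde{\mathcal K}^\epsilon}|\det D^2\Psi^\epsilon|\le\lambda^d\,|\widetilde{\mathcal K}^\epsilon|,
\]
so $|\widetilde{\mathcal K}^\epsilon|\ge c_0:=\lambda^{-d}|B_{\rho_0}|>0$, a bound uniform in all small $\epsilon$.

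It remains to transfer this lower bound to $\mathcal K$. Let $\mathcal M$ be the set of $x\in\overline{B_r}$ maximizing $\Psi(\cdot)-p^\top(\cdot)$ over $\overline{B_r}$ for some $|p|\le\rho_0$; the same boundary estimate applied to $\Psi$ itself shows such maximizers lie in the open ball $B_r$, hence are local maxima of the admissible perturbation (recall $\rho_0\le\delta$), so $\mathcal M\subseteq\mathcal K$, and $\mathcal M$ is compact by continuity of $\Psi$. A compactness argument based on the uniform convergence $\Psi^{\epsilon}\to\Psi$ shows that for every $\eta>0$ one has $\widetilde{\mathcal K}^\epsilon\subset\{x:\mathrm{dist}(x,\mathcal M)<\eta\}$ once $\epsilon$ is small: otherwise one extracts $x_{\epsilon_k}\in\widetilde{\mathcal K}^{\epsilon_k}$ maximizing $\Psi^{\epsilon_k}-p_k^\top(\cdot)$ with $x_{\epsilon_k}\to x_*$ and $p_k\to p_*$, $|p_*|\le\rho_0$, and passes to the limit in the maximality inequality to get $x_*\in\mathcal M$. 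Consequently $|\mathcal M^\eta|\ge|\widetilde{\mathcal K}^\epsilon|\ge c_0$ for every $\eta>0$, and by continuity of Lebesgue measure from above (using $\bigcap_{\eta>0}\mathcal M^\eta=\mathcal M$) we obtain $|\mathcal K|\ge|\mathcal M|=\lim_{\eta\downarrow0}|\mathcal M^\eta|\ge c_0>0$.

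I expect the main obstacle to be the final limiting step: producing a genuinely closed set $\mathcal M\subseteq\mathcal K$ and controlling the mollified sets $\widetilde{\mathcal K}^\epsilon$ in a Hausdorff sense so that the uniform measure bound survives the passage $\epsilon\to0$. (One could instead try to run the Jacobian estimate directly on $\Psi$ via Alexandrov's theorem (Lemma \ref{Alex}), avoiding mollification, but then one must justify the area–formula inequality for the merely a.e.-defined Alexandrov gradient, which is no simpler.) The rest — the boundary estimate, the Hessian squeeze, and the change of variables — is routine.
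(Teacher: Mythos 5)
Your proof is correct, but note that the paper does not prove this lemma at all: it is quoted verbatim as Jensen's lemma and attributed to \cite{Jensen88}, so there is no in-paper argument to compare against. What you have written is essentially the classical proof from Jensen's original paper (also Lemma A.3 of the Crandall--Ishii--Lions user's guide): mollify, use the second-order necessary condition at interior maximizers together with the semiconvexity bound $-\lambda I\le D^2\Psi^\epsilon\le 0$ to control $|\det D^2\Psi^\epsilon|$, apply the change-of-variables inequality to $\nabla\Psi^\epsilon$ to get the uniform lower bound $|\widetilde{\mathcal K}^\epsilon|\ge \lambda^{-d}|B_{\rho_0}|$, and pass to the limit. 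The details check out: the boundary estimate with $\rho_0=\min\{\delta,\mu/(3r)\}$ forces maximizers into the open ball; mollification preserves the lower Hessian bound because $(\Psi+\tfrac{\lambda}{2}\|\cdot\|^2)*\eta_\epsilon$ differs from $\Psi^\epsilon+\tfrac{\lambda}{2}\|\cdot\|^2$ by an affine function; the reduction to small $r$ is legitimate since $\mathcal K$ only grows with $r$ and $\delta$; and your final limiting step via the compact set $\mathcal M$ of global maximizers of $\Psi-p^\top(\cdot)$ over $\overline{B_r}$, the neighborhoods $\mathcal M^\eta$, and continuity of Lebesgue measure from above is exactly the right way to make the passage $\epsilon\to 0$ rigorous (it also sidesteps any measurability concern about $\mathcal K$ itself, since $\mathcal M\subseteq\mathcal K$ is compact with $|\mathcal M|\ge\lambda^{-d}|B_{\rho_0}|>0$). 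So your proposal supplies a complete, standard proof of a result the paper simply cites; the only thing it "buys" relative to the paper is self-containedness.
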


\bibliographystyle{plain}
\bibliography{references.bib}

\end{document}